\documentclass[11pt, oneside]{amsart}
 \usepackage[text={5.58in,8.5in},centering,letterpaper,dvips]{geometry}
\usepackage{graphicx}
\usepackage{amsfonts}
\usepackage[dvipsnames]{xcolor}
\usepackage{subcaption}
\usepackage{epsf}
\usepackage{amssymb}
\usepackage{amsmath}
\usepackage{amscd}
\usepackage{pdfpages}
\usepackage{fancyhdr}
\usepackage{setspace}
\usepackage{soul} 
\usepackage[all]{xy}
\usepackage{verbatim}
\usepackage{enumerate}
\usepackage[colorlinks=true, urlcolor=NavyBlue, linkcolor=NavyBlue, citecolor=NavyBlue]{hyperref}

\theoremstyle{theorem}
\newtheorem{theorem}{Theorem}[section]
\newtheorem{proposition}[theorem]{Proposition}
\newtheorem{lemma}[theorem]{Lemma}
\newtheorem{question}[theorem]{Question}
\newtheorem{corollary}[theorem]{Corollary}
\newtheorem{conjecture}[theorem]{Conjecture}

\makeatletter
\newtheorem*{rep@theorem}{\rep@title}
\newcommand{\newreptheorem}[2]{%
\newenvironment{rep#1}[1]{%
 \def\rep@title{#2 \ref{##1}}%
 \begin{rep@theorem}}%
 {\end{rep@theorem}}}
\makeatother

\newreptheorem{theorem}{Theorem}
\newreptheorem{lemma}{Lemma}
\newreptheorem{question}{Question}
\newreptheorem{corollary}{Corollary}
\newreptheorem{proposition}{Proposition}

\theoremstyle{definition}

\newtheorem{remark}[theorem]{Remark}

\newcommand{\Rr}{\mathfrak R}

\newcommand{\A}{\alpha}

\newcommand{\pd}{\partial}

\newcommand{\K}{\kappa}
\newcommand{\D}{\mathcal D}
\newcommand{\cl}{\text{cl}}

\makeatletter
\def\@seccntformat#1{%
  \protect\textup{\protect\@secnumfont
    \ifnum\pdfstrcmp{subsection}{#1}=0 \bfseries\fi
    \csname the#1\endcsname
    \protect\@secnumpunct
  }%
}  
\makeatother

\begin{document}

\rhead{\thepage}
\lhead{\author}
\thispagestyle{empty}


\raggedbottom
\pagenumbering{arabic}
\setcounter{section}{0}


\title{Symmetric ribbon numbers of low-complexity knots}

\author{Sajid Raihan Akash}
\address{Department of Physics and Astronomy, University of Nebraska--Lincoln, Lincoln, NE 68588}
\email{sakash2@huskers.unl.edu}

\author{Eric Corrado}
\address{Department of Mathematics, University of Nebraska--Lincoln, Lincoln, NE 68588}
\email{ericanthonycorrado@gmail.com}

\author{Bishop Placke}
\address{Department of Mathematics, University of Nebraska--Lincoln, Lincoln, NE 68588}
\email{bplacke4@huskers.unl.edu}

\author{Sam Sanketh}
\address{Department of Mathematics, University of Nebraska--Lincoln, Lincoln, NE 68588}
\email{ssanketh2@huskers.unl.edu}

\author{Nick Starns}
\address{Department of Mathematics, University of Nebraska--Lincoln, Lincoln, NE 68588}
\email{nickstarns.cs@gmail.com}

\author{Anok Timothy}
\address{Department of Mathematics, University of Nebraska--Lincoln, Lincoln, NE 68588}
\email{atimothy3@huskers.unl.edu}

\author{Alexander Zupan}
\address{Department of Mathematics, University of Nebraska--Lincoln, Lincoln, NE 68588}
\email{zupan@unl.edu}
\urladdr{https://math.unl.edu/azupan2}


\begin{abstract}
Every knot $K \subset S^3$ that admits a symmetric union presentation bounds an immersed ribbon disk in $S^3$, while the converse is an open problem due to Christoph Lamm.  The symmetric ribbon number $r_s(K)$ of $K$ is the minimum number of ribbon singularities in any symmetric ribbon disk bounded by $K$.  In this paper, we undertake a systematic investigation of symmetric ribbon numbers of knots with at most 12 crossings.  Along the way, we exhibit novel lower bounds for $r_s(K)$ arising from knot determinants, Alexander polynomials, Jones polynomials, and Kauffman polynomials.
\end{abstract}

\maketitle

\section{Introduction}\label{sec:intro}

In~\cite{lamm1}, Lamm introduced the notion of a symmetric union presentation for a knot $K$, constructed by beginning with a symmetric diagram $D \# -\!\!D$ and inserting additional crossings along the axis of symmetry, as shown at left in Figure~\ref{fig:symm}.

\begin{figure}[h!]
    \centering
    \includegraphics[width=0.9\textwidth]{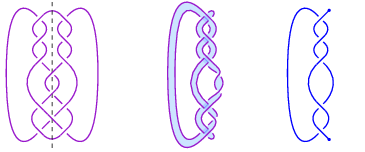}
            \put (-30,71) {$-2$}
            \put (-34,32) {$1$}
                 \caption{At left, a symmetric union presentation for the knot $12n_{313}$, appearing in~\cite{lamm2}.  At center, the corresponding symmetric ribbon disk, whose diagram can be obtained by ``folding" the left picture in half across the axis of symmetry.  At right, the symmetric ribbon disk is encoded as a \emph{labeled knotoid diagram}.}
    \label{fig:symm}
\end{figure}

Lamm observed that if $K$ admits a symmetric union presentation, then $K$ is a ribbon knot, posing the converse as a still-open question:

\begin{question}
Does every ribbon knot admit a symmetric union presentation?
\end{question}

Aceto reframed the idea of a symmetric union presentation by observing that every ribbon surface is isotopic to a ``flattened" surface built from the pieces shown in Figure~\ref{fig:modular}, proving that a ribbon disk $\D$ arises from Lamm's symmetric union construction if and only if $\D$ can be built without crossings or junctions~\cite{aceto}.  We call such a disk a \emph{symmetric ribbon disk}, and a knot $K$ is a \emph{symmetric ribbon knot} if $K$ bounds a symmetric ribbon disk.  An example of a symmetric ribbon disk appears at center in Figure~\ref{fig:symm}.  Lamm showed that each of the 21 prime ribbon knots $K$ with $c(K) \leq 10$ is symmetric ribbon knot~\cite{lamm1,lamm4} and then proved the same for all but 15 of the 137 prime ribbon knots with crossing number 11 or 12~\cite{lamm2}.

\begin{figure}[h!]
    \centering
    \includegraphics[width=0.8\textwidth]{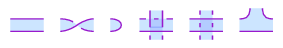}
            \put (-324,0) {strip}
            \put (-264,0) {twist}
            \put (-215,0) {cap}
            \put (-180,0) {singularity}
            \put (-114,0) {crossing}
            \put (-52,0) {junction}            
     \caption{The fundamental pieces used to construct a modular ribbon disk.}
    \label{fig:modular}
\end{figure}

For a ribbon knot $K$, the ribbon number $r(K)$ is defined to be the minimum number of ribbon singularities contained in any ribbon disk $\D$ bounded by $K$.  Ribbon number was introduced in~\cite{miz} and tabulated for prime knots up to 11 crossings in~\cite{FMZ} and for prime knots with 12 crossings in~\cite{polymath}.  Aceto defined the analogue for a symmetric ribbon knot $K$, the \emph{symmetric ribbon number} $r_s(K)$, to be the minimum number of ribbon singularities in any symmetric ribbon disk bounded by $K$.

In this article, we explore various lower bounds for $r_s(K)$ arising from the determinant, Alexander polynomial, Jones polynomial, and Kauffman polynomial of a knot $K$.  We apply our tool to low-complexity knots, proving

\begin{theorem}\label{thm:main}
The symmetric ribbon numbers for all but 3 of the 44 known prime symmetric ribbon knots with 11 or fewer crossings appear in Table~\ref{table:10}, the symmetric ribbon numbers for all but 4 of the 53 known prime nonalternating symmetric ribbon knots with crossing number 12 appear in Table~\ref{table:12n}, and the symmetric ribbon numbers for 27 of the 46 known prime alternating symmetric ribbon knots with crossing number 12 appear in Table~\ref{table:12a}.
\end{theorem}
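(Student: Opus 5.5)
This is a tabulation result, so the proof will be a two-sided computation for each knot: an upper bound from an explicit symmetric ribbon disk, and a matching lower bound from invariants. The organizing principle will be to encode each symmetric ribbon disk as a labeled knotoid diagram, as in Figure~\ref{fig:symm}. Folding a symmetric union presentation across its axis gives a symmetric ribbon disk. Its ribbon singularities correspond to crossings of the half-diagram $D$ through which the knotoid arc passes. The twist labels record the crossings inserted on the axis.

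\textbf{Upper bounds.} I will take Lamm's symmetric union presentations~\cite{lamm1,lamm2,lamm4} for each known prime symmetric ribbon knot. Beyond those, I will run a computer enumeration of labeled knotoid diagrams with few classical crossings, one crossing per ribbon singularity, and with small twist labels. For each resulting boundary knot I will identify its type with standard software (SnapPy/KnotInfo-style identification). This yields, for each knot $K$, the smallest $n$ for which some enumerated diagram with $n$ singularities has boundary $K$, hence $r_s(K)\le n$. Since $r(K)\le r_s(K)$, the known ribbon numbers from~\cite{FMZ,polymath} also give a free lower bound.

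\textbf{Lower bounds.} The new ingredient is a set of invariant bounds tailored to symmetric disks. The plan has three parts.

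\begin{enumerate}
\item Use the fact that a symmetric ribbon disk with $n$ singularities comes from a symmetric union $D\#-D$ with twist regions. Kauffman--Lamm-type formulas then express the determinant and the Alexander polynomial as functions of the twist parameters. The result is that $\det(K)$ and $\Delta_K$ take restricted forms: a square or a product bounded in terms of $n$, and a span at most something like $2n$.
\item Bound the span of the Jones polynomial, or the Kauffman polynomial, through the crossing number of the symmetric union diagram. This crossing number is controlled by $n$ once the twist regions are accounted for.
\item Compare each invariant with the finite list of values realized by all labeled knotoid diagrams with at most $n-1$ singularities. Because the twist labels are integer parameters, I will prove that each invariant depends on each label in a controlled way, polynomially or through a bounded span. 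That turns ``all diagrams with $n-1$ singularities'' into a finite check modulo a recognizable parametric family.
\end{enumerate}

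For each knot in Tables~\ref{table:10},~\ref{table:12n},~\ref{table:12a}, one of these bounds should meet the construction. The knots where the bounds fail to meet are exactly the excluded ones: 3, 4, and 19 respectively.

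The main obstacle is making the lower bound in step 3 rigorous in spite of the infinitely many twist labels. My first attempt will be to show that a twist label of large absolute value forces the span of the Alexander or Jones polynomial beyond the bound allowed for the target knot. That would leave only finitely many labelings to enumerate. The remaining knots, whose invariants coincide with some smaller-$n$ family, are the unresolved cases, and I expect them to be predominantly the 12-crossing alternating knots.
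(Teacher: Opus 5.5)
The upper-bound side of your plan is workable: Lamm's presentations, reduced by the aglet moves of Proposition~\ref{prop:upper}, plus a search over small labeled knotoid diagrams, which would recover $K_{-3,1}$ and $K_{-4,1}$ for $11n_{37}$ and $12n_{414}$. Your use of Lemma~\ref{lem:obv} is also fine. The lower-bound half, however, has a genuine gap, because the mechanism you propose for taming the infinitely many twist labels fails.

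For the Alexander polynomial, a large label does \emph{not} enlarge the span. At a twist crossing the oriented resolution $L_0$ is a two-component ribbon link, so $\Delta_{L_0}=0$. The skein relation then shows that $\Delta_K$ depends only on the labels mod $2$ (Lemma~\ref{lem:change}). This parity invariance, not a span estimate, is the missing key idea. Combined with the classification of reduced singular arc diagrams with at most four crossings (Lemmas~\ref{lem:c2},~\ref{lem:c3} and Proposition~\ref{prop:c4}), it makes $\Rr^s_2$, $\Rr^s_3$, $\Rr^s_4$ finite computations (Lemmas~\ref{lem:r2},~\ref{lem:r3} and Theorem~\ref{thm:r4}). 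These supply every lower bound cited as Lemma~\ref{lem:r3} or Theorem~\ref{thm:r4} in the tables.

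For the Jones polynomial, your span argument breaks as soon as there are two labels. For $\K'_{m,n}$ both twist regions resolve to $U_2$, and $V_{K_{m,n}}$ depends only on $m+n$ (Proposition~\ref{prop:jones3}). So $(m,n)=(N,-N)$ gives the same polynomial for every $N$, and no individual label is bounded. What you need is the exact skein computation $V=(-t)^{m+n}f(t)+1$. That is what excludes $r_s=3$ for $11n_{39}$, $12n_{256}$, $12n_{257}$, $12n_{394}$ and $12n_{870}$ (Corollary~\ref{cor:1139}), whose determinants and Alexander polynomials are all consistent with $r_s=3$.

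Your step 2 is also misdirected. The crossing number of the symmetric union diagram of $K$ is not controlled by $n$, since each twist region contributes $|\text{label}|$ crossings. Indeed, the span of $V_{K_n}=(-t)^nf(t)+1$ in Proposition~\ref{prop:jones2} grows linearly in $|n|$ while the number of singularities stays $2$.

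The working idea is to apply Kidwell's bound to the label-free partial knot $J=\cl(\K)$ instead of to $K$:
\begin{itemize}
\item Lemma~\ref{lem:overpass} and Theorem~\ref{thm:kid} give $\deg_z F_J\le c(\K)$.
\item Theorem~\ref{thm:32} gives $c(J)\le\frac32 c(\K)$.
\item Combine these with Lamm's $\det K=(\det J)^2$ (Lemma~\ref{lem:det}), KnotInfo data, and Lemma~\ref{lem:connect}, which rules out $J=3_1\#4_1$.
\end{itemize}
This yields $\det K\le 169$ whenever $r_s(K)=5$ (Theorem~\ref{thm:det}). Your step 1 offers no mechanism for this. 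Even $c(J)\le 7$ alone only gives $\det K\le 21^2=441$, which does not exclude determinants $225$, $289$ or $361$.

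Without this bound, the eight values $r_s=6$ for $12a_{427}$, $12a_{435}$, $12a_{464}$, $12a_{484}$, $12a_{631}$, $12a_{975}$, $12a_{1019}$ and $12a_{1105}$ are not established. So the counts of resolved knots in the statement do not follow from the tools you list.
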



While it is clear from the definitions that $r(K) \leq r_s(K)$, our work yields the first examples of symmetric ribbon knots $K$ such that $r(K) < r_s(K)$.  Of the 105 prime symmetric ribbon knots up to 12 crossings such that both $r(K)$ and $r_s(K)$ are known, 56 satisfy $r(K) = r_s(K)$, 45 satisfy $r_s(K) - r(K) = 1$, and four, the knots $12a_{427}$, $12a_{435}$, $12a_{464}$, and $12a_{631}$, satisfy $r_s(K) - r(K) = 2$.  Our data suggest several conjectures.

\begin{conjecture}
For every positive integer $n$, there exists a symmetric ribbon knot $K$ such that $r_s(K) - r(K) = n$.
\end{conjecture}

All of the knots in our data for which $r_s(K) - r(K) > 0$ have $r(K) \geq 3$.  Thus, we also expect

\begin{conjecture}
If $K$ is a symmetric ribbon knot such that $r(K) = 2$, then $r_s(K) = 2$ as well.
\end{conjecture}

Note that Aceto proved for every $n$, there is a ribbon knot $K_n$ such that $r(K_n) = 2$ while $r_s(K_n) > n$~\cite{aceto}.  However, it is unknown whether the knots in Aceto's family are symmetric ribbon knots, and so this inequality admits the possibility that $r_s(K_n) = \infty$ (the convention for a ribbon knot that is not symmetric, should such a knot exist).

To determine the data referenced in Theorem~\ref{thm:main} we encode symmetric ribbon disks as \emph{labeled knotoid diagrams} (defined below, example shown at right in Figure~\ref{fig:symm}) and employ a number of different tactics.  The first of our tools is the set of Alexander polynomials,
\[ \Rr^s_n = \{\Delta_K(t) : r_s(K) = n \text{ and $K$ is prime}\}.\]
A similar set, $\Rr_n$, involving $r(K)$ was introduced in~\cite{FMZ}, in which the authors proved that $\Rr_n$ is finite for all $n$ and determined the sets $\Rr_2$ and $\Rr_3$.  Since $\Rr^s_n \subset \Rr_n$, we have that $\Rr^s_n$ is also finite.  We enumerate $\Rr^s_2$ and $\Rr^s_3$ in Lemmas~\ref{lem:r2} and~\ref{lem:r3} and prove

\begin{theorem}\label{thm:r4}
The set $\Rr^s_4$ consists of the 27 polynomials given in Table~\ref{table:r4}.
\end{theorem}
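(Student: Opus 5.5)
We will determine $\Rr^s_4$ by exhaustively enumerating symmetric ribbon disks with exactly four ribbon singularities, encoded as labeled knotoid diagrams. Then we compute the Alexander polynomials of their boundaries and discard those already accounted for by fewer singularities or by non-prime boundaries.

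\textbf{Reduction to finite combinatorial data.} A symmetric ribbon disk $\D$ with $n$ singularities is built only from strips, twists, caps and singularities (Aceto's characterization~\cite{aceto}). Folding gives a knotoid-type core arc whose $n$ self-intersections are the ribbon singularities. The edges of this arc carry integer twist labels. The key point is that the Alexander polynomial of $\partial\D$ depends only on the underlying knotoid diagram with $n$ crossings together with its labels. We can then use the standard ribbon-disk presentation: the Seifert-type matrix for a ribbon knot has the block form $\begin{pmatrix} 0 & A \\ B & C\end{pmatrix}$, so $\Delta_K(t) \doteq f(t)f(t^{-1})$. Here $f$ is read off from a Fox-calculus or Wirtinger-type matrix associated to the singularities. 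For a symmetric disk this matrix involves only which arc passes through which singularity and the signs, while the twist labels do not affect $\Delta$. This is the mechanism that makes $\Rr_n$ finite in~\cite{FMZ}, so I would adapt that argument to the symmetric setting.

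\textbf{Enumeration.} First, we list all knotoid diagrams with exactly four crossings up to the obvious symmetries: reflection, reversal, and planar isotopy. Second, we assign the crossing data allowed by symmetric disks, namely over/under information, which determines the sign and the band that passes through. Third, we compute $f(t)$ for each assignment and form $\Delta = f(t)f(t^{-1})$. Fourth, we remove polynomials realized with at most three singularities. By Lemmas~\ref{lem:r2} and~\ref{lem:r3}, those polynomials lie in $\Rr^s_2 \cup \Rr^s_3$, but we must be careful. A polynomial in $\Rr^s_3$ could also be realized by some other prime knot with $r_s=4$, so membership in $\Rr^s_4$ is not exclusive. The correct test is therefore whether a prime knot with $r_s = 4$ realizes the polynomial.

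This leads to the two inclusions. For the upper bound we need $\Rr^s_4 \subseteq$ (the computed list). This holds because any prime $K$ with $r_s(K)=4$ bounds such a disk, so its polynomial appears in the enumeration. For the reverse inclusion, each of the 27 polynomials in Table~\ref{table:r4} must be realized by an explicit prime knot with $r_s = 4$. We would exhibit the labeled knotoid diagram and choose twist labels so that the boundary is prime. Then we would certify that $r_s \ge 4$, for instance because the polynomial does not lie in $\Rr^s_1\cup\Rr^s_2\cup\Rr^s_3$, or more generally because it is not a product of members of lower sets. Polynomials produced by the enumeration that turn out not to be realized by any prime knot with $r_s=4$ would be excluded by such arguments, for example when every disk realizing them has a reducible boundary.

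\textbf{Main obstacle.} The hardest step is making the enumeration provably complete while keeping it finite and tractable. This requires justifying that the twist labels do not affect $\Delta$ and that every symmetric disk with four singularities reduces to one of finitely many knotoid diagrams. I would prove the twist-independence lemma first via the block Seifert matrix, and then carry out the enumeration by computer over all four-crossing knotoid shadows.
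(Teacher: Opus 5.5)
\textbf{The gap: twist labels do affect $\Delta$.} Your twist-independence lemma is false, and the enumeration built on it fails. The labels count half-twists in the strips of the symmetric ribbon disk, and $\Delta_{\pd\D}$ depends on them through their parities.

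Lemma~\ref{lem:r2} already gives a counterexample. For the shadow $2_1$ the crossing data are forced: both aglet strands must pass under, or an aglet move removes a singularity. Label $0$ on its middle strand gives the square knot, with $\Delta=(1-t+t^2)^2$. Label $1$ gives $\Delta=2-5t+2t^2$. Both elements of $\Rr^s_2$ therefore come from the same shadow with the same over/under data. So no recipe in which ``over/under information determines the sign'' can be correct: whatever Seifert or Fox matrix you write down, its entries must see the label parities.

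What is true is invariance under changing a label by $2$. The paper proves this in Lemma~\ref{lem:change} by applying the skein relation at one crossing of the twist region. There $L_\pm$ are the two labeled disks and $L_0$ is a two-component ribbon link, so $\Delta_{L_0}=0$. This is simpler than a block-Seifert argument.

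The enumeration must therefore run over mod 2 labeled knotoid diagrams. That means crossing data together with a parity on every strand except the two aglet strands, which may be labeled $0$ by the isotopy of Figure~\ref{fig:aglet}. For a four-crossing shadow this gives $2^2\cdot 2^3=32$ cases, not the $2^2$ you would consider. As written, your computed list would be too small. The containment $\Rr^s_4\subseteq$ (your list), which you rightly treat as the automatic direction, would then actually fail.

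\textbf{Two further points.}

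\emph{Reducing the shadows.} Enumerating all four-crossing shadows gives a valid superset. However, it also produces polynomials of composite boundaries and of knots with $r_s<4$. You have no mechanism to discard these, since, as you note, subtracting $\Rr^s_2\cup\Rr^s_3$ is not legitimate. The paper restricts at the outset to reduced singular arc diagrams. A minimal diagram for a prime $K$ with $r_s(K)=4$ admits no R1 move and no aglet move, and it is prime as a knotoid diagram. This leaves the eight shadows of Proposition~\ref{prop:c4}. Leaf isotopies cut these down to $4_2,4_3,4_4,4_7,4_8$, and computing the $5\cdot 32$ mod 2 labeled cases yields the 27 polynomials.

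\emph{The reverse inclusion.} Your proposed certificate $\Delta\notin\Rr^s_2\cup\Rr^s_3$ cannot apply to the eight entries of Table~\ref{table:r4} that already lie in $\Rr^s_2\cup\Rr^s_3$. Examples are $1$ and $2-6t+9t^2-6t^3+2t^4$. The paper's argument does not address this direction at all. It simply records the polynomials realized by the reduced diagrams, which is the containment its lower-bound arguments use.
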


If $r_s(K) = 5$, the set $\Rr^s_5$ is too large to compute via our methods.  However, leveraging a Theorem of Kidwell about the $z$-degree of the Kauffman polynomial~\cite{kidwell}, we obtain a restriction on these knots involving their determinants.

\begin{theorem}\label{thm:det}
Suppose $K$ is a prime symmetric ribbon knot.  If $r_s(K) = 5$, then $\det(K) \leq 169$.
\end{theorem}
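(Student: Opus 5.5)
We combine a bound on the crossing number of a symmetric union diagram with Kidwell's inequality on the $z$-degree of the Kauffman polynomial and the determinant. Suppose $K$ is prime with $r_s(K)=5$, and fix a symmetric ribbon disk $\D$ with exactly $5$ ribbon singularities, encoded as a labeled knotoid diagram. By Aceto's correspondence, $\D$ unfolds to a symmetric union presentation $D\#-D$ with twist regions inserted on the axis.

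\textbf{Step 1: bound the number of twist regions.} First I will show that after isotopy every labeled knotoid diagram with $5$ singularities has a bounded number $m$ of crossings in the underlying knotoid. In the unfolded symmetric union diagram this gives at most $2m$ crossings from $D\#-D$, plus some number $k$ of axis twist regions, one per label. Each label is an integer number of half twists, and may be arbitrarily large. So the full crossing number is unbounded, and the determinant must be controlled another way.

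\textbf{Step 2: Kidwell's bound.} Kidwell's theorem says that for a diagram with $c$ crossings whose longest bridge has length $b$, the $z$-degree of the Kauffman polynomial $F_K(a,z)$ is at most $c-b$, with equality in the reduced alternating prime case. The $z$-degree is an invariant of $K$, so I want to relate it to the determinant. Twisting along a single axis region changes $F_K$ in a controlled, linear-recursive way, because a twist region of $n$ half twists satisfies a skein recursion. Hence $\det(K)=|F_K(\text{appropriate specialization})|$, or more simply $|\Delta_K(-1)|$, is determined by the labels through a formula affine in each label.

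\textbf{Step 3: realize the determinant via the symmetric structure.} The key fact I will aim for is the standard one for symmetric unions: $\det(K)=\det(D)^2$, up to the axis twists, which only affect sign-type factors in the symmetric case. Lamm showed that the Alexander polynomial is preserved modulo the twist parameters in a way that makes $\Delta_K(-1)=\pm\det(D)^2$ hold independently of the labels. So $\det(K)=\det(D\#-D)$, which is a square, and the problem reduces to bounding $\det(D)$ for the partial knot $D$. The partial knot $D$ comes from the knotoid with the labels set to $0$; it is a diagram with at most about $m+5$ crossings.

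\textbf{Step 4: finite check.} For a knot diagram with $c$ crossings, $\det\le$ the number of spanning trees of the Tait graph, which is at most Fibonacci-type. Here I would use Kidwell's bound to show that the $z$-degree, and hence the effective alternating crossing number of $D$, is at most $7$. This gives $\det(D)\le 13$, the maximal determinant of a knot with $7$ crossings ($7_7$ has determinant $21$, so care is needed; I may instead get $c\le 6$, where $6_3$ has determinant $13$). Then $\det(K)\le 169$.

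\textbf{Main obstacle.} The main obstacle is Step 4: pinning down precisely that the partial knot has Kauffman $z$-degree small enough, at most $6$, to force $\det(D)\le 13$. I would try first to enumerate the knotoid shadows with $5$ singularities and apply Kidwell's $c-b$ bound to each, using long bridges created by the cap ends.
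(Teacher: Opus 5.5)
The proposal has genuine gaps. Your Step 3 is correct and is what the paper uses: Lamm's theorem gives $\det(K)=\det(J)^2$ for the partial knot $J=\cl(\K)$. Steps 1--2 are a detour, since the knotoid has exactly $c(\K)=5$ crossings and the labels do not affect the determinant at all. The proof therefore lives entirely in Step 4, which you yourself flag, and as proposed it does not go through.

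\textbf{The $z$-degree bound and the missing crossing-number bound.} The bound you need is $\deg_z F_J\le 5$, not $6$ or $7$. Your remark about long bridges at the cap ends is the right idea, made precise as follows. Close $\K$ with an arc passing over everything. The resulting diagram $D$ of $J$ then has $\ell(D)\ge c(D)-c(\K)$, and Kidwell gives $\deg_z F_J\le c(D)-\ell(D)\le 5$.

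Even so, the $z$-degree alone does not control the crossing number or determinant of a non-alternating knot. So ``small $z$-degree, hence small effective alternating crossing number'' is not a valid step. Your own estimate of roughly $m+5\le 10$ crossings leaves the non-alternating $8$--$10$ crossing knots uncontrolled. The missing ingredient is Kidwell's theorem (appendix to Stoimenow) that $\cl(\K)$ has a diagram with at most $\frac32 c(\K)$ crossings, so $c(J)\le 7$. Combining the two bounds: if $J$ is prime it is alternating, and every $7$-crossing knot has $z$-degree $6$. Hence $c(J)\le 6$ and $\det(J)\le 13$.

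\textbf{Primality of $K$ is never used, and it is essential.} The partial knot may be composite. The case $3_1\#3_1$ is harmless, giving $\det(K)\le 81$. But $J=3_1\#4_1$ has $c(J)=7$, $\deg_z F_J=2+3=5$ and $\det(J)=15$. It passes both tests above and would allow $\det(K)=225$. This genuinely occurs for composite knots: $6_1\#8_8$ bounds a symmetric ribbon disk with five singularities, the same Kauffman/crossing-number argument shows it bounds none with four or fewer, and its determinant is $225$. So the statement is false without primality, and your argument cannot be complete.

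The paper excludes this case with a separate lemma. If a $5$-crossing knotoid closes to $3_1\#4_1$, the $7$-crossing closure diagram carries two consecutive over-crossings on the closing arc, so it is non-alternating. By Murasugi--Thistlethwaite, since $V_J$ has breadth $7$, it must be a non-prime diagram $D_1\#D_2$. A case analysis on how the closing arc meets the splitting circle then shows one of two things. Either $\K$ is composite, forcing $K$ to be composite. Or one can surger out a knotoid with $1$ or $2$ crossings closing to $3_1$ or $4_1$, contradicting the $z$-degree bound. Some argument of this kind is needed to reach $169$.
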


Finally, we obtain additional restrictions via Jones polynomials.  We show

\begin{theorem}\label{thm:jones}
Suppose $K$ is a symmetric ribbon knot such that $r_s(K) = 2$, or $r_s(K) = 3$ and $\det(K) = 25$.  Then there exists an integer $n$ such that
\[ V_K(t) = (-t)^n \cdot f(t) + 1,\]
where
\[ f(t) = -t^{-3}+t^{-2}-t^{-1}+2-t + t^2 - t^3 \text{ if } r_s(K) = 2\] or \[ f(t) = t^{-4}-2t^{-3}+3t^{-2}-4t^{-1}+4-4t+3t^2-2t^3+t^4 \text{ if  } r_s(K) = 3 \text{ and } \det(K) = 25.\]
\end{theorem}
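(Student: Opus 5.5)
The plan is to reduce the statement to a finite list of labeled knotoid diagrams with integer twist parameters. For each diagram we then compute the Jones polynomial as an explicit function of those parameters.

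\emph{Reduction to finitely many families.} By Lemma~\ref{lem:r2} and Lemma~\ref{lem:r3}, a symmetric ribbon disk with two (respectively three) ribbon singularities is encoded by a labeled knotoid diagram drawn from a short list of underlying knotoid shapes. The labels record the number of half-twists inserted on the axis of symmetry at each band. In the case $r_s(K)=3$ we first discard every shape whose determinant, viewed as a function of the labels, can never equal $25$. We expect the determinant of a symmetric union to be a perfect square that depends on the labels only through a small amount of combinatorial data, so this should leave one or two shapes. We may also assume the disk realizes $r_s(K)$, so no label is forced to vanish in a way that lowers the singularity count. That assumption only restricts the families further and does not affect the argument.

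\emph{Skein computation along the axis.} Pass from the labeled knotoid diagram back to Lamm's symmetric union diagram $D\#-D$ with twist regions $T_{n_1},\dots,T_{n_k}$ on the axis, as in Figure~\ref{fig:symm}. Apply the Kauffman bracket, or equivalently the Jones skein relation $t^{-1}V_+-tV_-=(t^{1/2}-t^{-1/2})V_0$, to one crossing of a twist region at a time. Resolving all crossings of a region $T_n$ expresses its contribution as $A^{n}\langle T_0\rangle + c_n(A)\langle T_\infty\rangle$, where $c_n(A)$ is a geometric-series coefficient.

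The crucial point is that the resolution cutting the band across the axis produces a symmetric diagram $D'\#-D'$ or a split union $D'\sqcup -D'$. The bracket of such a diagram is computable and mirror-symmetric, because a diagram and its mirror have brackets related by $A\leftrightarrow A^{-1}$. The resolution leaving a "straight" band reduces the number of ribbon singularities, or eventually gives an unknot or unlink. Inducting over the twist regions, the terms proportional to the unknot contribute the summand $1$. The remaining terms collapse, after writhe normalization, into a single monomial $(-t)^n$ times a palindromic-up-to-sign polynomial. We then compute that polynomial by evaluating one representative of each family, for instance all labels small, and check that it equals the stated $f(t)$.

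\emph{Sanity check.} The formula is consistent with $V_K(1)=1$, since $f(1)=0$. It is also consistent with $|V_K(-1)|=\det K$: we get $f(-1)=9$, so the determinant can be $9$ when $r_s(K)=2$, and the second $f$ gives $f(-1)=25$.

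\emph{Main obstacle.} The main obstacle is the second step: showing that the dependence on all the labels really collapses to the single factor $(-t)^n$ rather than leaving a genuinely multivariable expression. The approach I would try first is to use symmetry. Changing a label by $\pm1$ corresponds to a crossing change on the axis, and the two skein partners differ by a diagram whose bracket is symmetric under $A\leftrightarrow A^{-1}$ up to a monomial. This should force $V_{n+1}-1=(-t)^{\pm1}(V_n-1)$, which is the shape of the claimed formula. If this fails for some shape, I would fall back on a direct bracket computation for that specific family.
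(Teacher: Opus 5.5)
\textbf{There is a genuine gap.} The key ingredient is missing. Everything depends on identifying the link $L_0$ produced by the \emph{oriented} smoothing of a twist crossing on the axis, that is, the smoothing that cuts the strip of the ribbon disk. You never pin this down. You describe one resolution as giving ``$D'\#-D'$ or a split union $D'\sqcup -D'$'' with a ``computable and mirror-symmetric'' bracket, and the other as something that ``reduces the number of ribbon singularities, or eventually gives an unknot or unlink.'' Read literally, these roles are swapped: keeping the strip uncut only changes a label, so you stay inside the same family, while cutting it is what destroys singularities.

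What is needed, and what the paper uses, is that $L_0$ is exactly the two-component unlink $U_2$. This must hold for $\K_n$ and for \emph{each} of the two twist regions of $\K'_{m,n}$. For $\K_n$ you can see it directly: pull each half of the cut strip back through the singularities at which it is the piercing strand, and you are left with two disjoint embedded disks.

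This is not a technicality. Apply the bracket relation at one twist crossing and normalize by writhe. Every crossing of a symmetric ribbon disk is either a twist crossing or one of the four sign-cancelling crossings at a singularity, so the writhe is just the label sum. This gives $V_{K_{n+1}}=-t\,V_{K_n}-t^{1/2}V_{L_0}$. Therefore $V_{K_{n+1}}-1=-t\,(V_{K_n}-1)$, which is exactly what produces the constant $1$ and the factor $(-t)^n$, holds if and only if $V_{L_0}=-t^{1/2}-t^{-1/2}$. Symmetry under $A\leftrightarrow A^{-1}$ up to a monomial, which is all your final paragraph appeals to, cannot force this. Likewise, collapsing the two-label family to $(-t)^{m+n}$ needs $L_0=U_2$ in both twist regions, and ``inducting over the twist regions'' skips that.

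\textbf{The mechanism for your ``main obstacle'' is also wrong.} Changing a label by $\pm1$ is not a crossing change; a crossing change at a twist crossing changes the label by $2$. That is the paper's route:
\begin{itemize}
\item For $\K_n$, the oriented skein triple $L_+=K_n$, $L_-=K_{n-2}$, $L_0=U_2$ gives $V_{K_n}-1=t^2(V_{K_{n-2}}-1)$. This is why two base cases, $K_0$ and $K_1$, are needed.
\item For $\K'_{m,n}$, smoothing either region yields $U_2$, so $V_{K_{m+2,n}}=V_{K_{m,n+2}}$. This reduces to $K_{0,n}$ and $K_{1,n}$, with four base cases.
\end{itemize}
Labels differing by one are related by a bracket smoothing instead. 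Once you prove $L_0=U_2$, your bracket route becomes a valid and slightly more economical alternative. It yields $V_{K_{m,n}}-1=(-t)^{m+n}(V_{K_{0,0}}-1)$ from a single base case per family. Those base cases are the all-zero labelings, namely $3_1\#-3_1$ and $4_1\#4_1$.

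\textbf{Two smaller points.}
\begin{itemize}
\item The determinant filter for $r_s(K)=3$ should not be a heuristic. Lemma~\ref{lem:det} gives $\det(K)=\det(\cl(\K))^2$ independently of the labels. This discards $\K_{m,n}$ (its closure is the unknot) and leaves $\K'_{m,n}$ (its closure is $4_1$).
\item Your sanity check has an off-by-one: $f(-1)$ equals $8$ and $24$ respectively. It is $V_K(-1)=f(-1)+1$ that equals $9$ and $25$.
\end{itemize}
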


Theorem~\ref{thm:jones} is a combination of Propositions~\ref{prop:jones2} and~\ref{prop:jones3}, and it has both obstructive and constructive applications.  For the knots
\[ K \in \{11n_{39}, 12n_{256}, 12n_{257},12n_{394},12n_{870}\},\]
the Alexander polynomial $\Delta_K(t)$ is in $\Rr^s_3$ but not in $\Rr^s_2$, so that the Alexander polynomial bound shows that $r_s(K) \geq 3$.  However, $\det(K) = 25$ but $V_K(t)$ is not of the form of Theorem~\ref{thm:jones} (as shown in Corollary~\ref{cor:1139}), and so we get the stronger bound $r_s(K) \geq 4$.  On the other hand, for the knots $K = 11n_{37}$ or $12n_{414}$, symmetric union presentations from~\cite{lamm2} indicate that $r_s(K) \leq 4$.  Yet $\det(K) = 25$ and $V_K(t)$ satisfies the conclusion of Proposition~\ref{prop:jones3}, which contains a stronger statement and indicates precisely where to look to find a symmetric ribbon disk for $K$ with three ribbon intersections, should it exist.  Via experiment, we found the new symmetric union disks depicted in Figure~\ref{fig:symm3}, verifying that $r_s(K) = 3$ for these two knots.  More details are included in Remark~\ref{rmk:search}.

\begin{figure}[h!]
    \centering
    \includegraphics[width=0.7\textwidth]{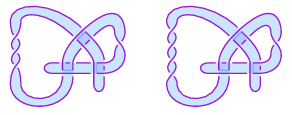}
     \caption{Symmetric ribbon disks for $11n_{37}$ (left) and $12n_{414}$ (right).}
    \label{fig:symm3}
\end{figure}

Our work relied heavily on computations performed using SnapPy~\cite{snappy} within SageMath~\cite{sage} and the knot data available through KnotInfo~\cite{knotinfo}.

\subsection{Organization}

In Section~\ref{sec:prelim}, we set up preliminaries, including the process of converting a labeled knotoid diagram to a symmetric ribbon disk and vice versa, along with moves on labeled knotoid diagrams.  In Section~\ref{sec:singular}, we classify singular arc diagrams (knotoid diagrams without crossing information) with two, three, and four crossings.  In Section~\ref{sec:alex}, we determine the sets $\Rr^s_2$ and $\Rr^s_3$ and include Table~\ref{table:r4}.  In Section~\ref{sec:det}, we prove Theorem~\ref{thm:det}, and in Section~\ref{sec:jones}, we prove Theorem~\ref{thm:jones}.  Finally, we use all of our tools to tabulate symmetric ribbon numbers for prime knots up to 12 crossings within the tables in Section~\ref{sec:tab}.  Section~\ref{sec:app} is an appendix in which we complete the proof of Theorem~\ref{thm:r4}.

\subsection{Acknowledgments}

We thank the College of Arts and Sciences at the University of Nebraska-Lincoln for running the three-week course MATH 391: Special Topics in Mathematics in January 2024, during which this project was first conceived and initiated.  NS was supported by UNL's Undergraduate Creative Activity and Research Experience (UCARE) program during summer 2024, and AT was supported by the UCARE program during academic year 2024-2025.  AZ thanks Dimos Goundaroulis for enlightening communications about the tabulation of knotoid diagrams and Jeffrey Meier for helpful conversations about ribbon knots and disks.  AZ was supported in part by NSF grant DMS-2405301 and a Simons Travel Support award.

\section{Preliminaries}\label{sec:prelim}

We work in the smooth category throughout.  A knot $K \subset S^3$ is called \emph{ribbon} if $K$ bounds an immersed disk $\D \subset S^3$ with only ribbon singularities, called a \emph{ribbon disk} for $K$.  The \emph{ribbon number} $r(\D)$ counts the number of ribbon singularities contained in $D$, and the \emph{ribbon number} $r(K)$ of $K$ is defined as
\[ r(K) = \min\{r(\D) : \D \text{ is a ribbon disk for } K\}.\]
If $K$ is non-trivial, then $r(K) \geq 2$ (see~\cite[Remark 2.7]{FMZ}, for instance).  Every ribbon disk $\D$ can be flattened to a projection plane and built from six types of fundamental pieces, shown in Figure~\ref{fig:modular}:  Caps, strips, crossings, singularities, crossings, and junctions.  These pieces were discussed in~\cite{eisermann}, and ~\cite{aceto}, and if a ribbon disk $\D$ is realized as a union of these building blocks, we call $\D$ a \emph{modular ribbon disk}.  

A symmetric union presentation $D^*$, introduced in~\cite{lamm1}, is defined as follows:  First, start with a knot diagram $D$ for a knot $J$.  The diagram $D \# -\!\!D$, which we call the \emph{initial diagram}, has an axis of symmetry $\ell$.  Choose some number of pairwise disjoint disks $\Delta_1,\dots,\Delta_k$ which have reflection symmetry over $\ell$ and such that each disk $\Delta_i$ meets $D \# -\!\!D$ in a pair of arcs that also have reflection symmetry over $\ell$.  Now, replace each $(D \# -\!\!D) \cap \Delta_i$ with a tangle diagram $T_i$ so that all crossings of $T_i$ occur on the axis of symmetry $\ell$.  The resulting diagram $D^*$ is called a \emph{symmetric union presentation} with \emph{partial knot} $J$.  Lamm proved that if $K$ admits a symmetric union presentation with partial knot $J$, then $K$ is a ribbon knot and $\det(K) = (\det(J))^2$~\cite{lamm1}.  See Figure~\ref{fig:symm} for an example of a symmetric union presentation.

Related to modular diagrams, Aceto proved a useful proposition: \vspace{.1cm}

\begin{proposition}~\cite{aceto}\label{prop:symm}
A knot $K \subset S^3$ admits a symmetric union presentation if and only if $K$ bounds a modular ribbon disk $\mathcal{D}$ that contains no crossings or junctions.
\end{proposition}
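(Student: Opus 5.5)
The argument has two directions, and in each I would pass between the symmetric union picture and the modular ribbon disk by ``folding'' and ``unfolding'' along the axis $\ell$, as in Figure~\ref{fig:symm}.

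For the forward direction, let $D^*$ be a symmetric union presentation with partial knot $J$, built from $D \# -\!D$ by inserting tangles $T_1,\dots,T_k$ in the disks $\Delta_i$. I would first treat the initial diagram $D\# -\!D$. Fold it across $\ell$: each crossing of $D$ is paired with its mirror crossing in $-D$. The region swept out between each point of $D$ (drawn on one side of $\ell$) and its mirror image is a band whose core is a copy of the arc $D$ cut open at the connect-sum point. This band is an immersed disk bounded by $D\#-\!D$. Where two strands of $D$ cross, the two corresponding bands meet, and because the over/under information is reversed in the mirror, the intersection is a ribbon singularity: one band passes through a slit in the other. Checking this local picture carefully gives a modular disk built only from strips, caps (at the two ends of the cut-open arc) and singularities. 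Next, each crossing that $T_i$ places on $\ell$ is a half twist of the band at that spot, which is a twist piece. Crossings of $T_i$ that swap the two symmetric arcs differently can be isotoped to half twists of the single band crossing $\ell$, because all crossings of $T_i$ lie on $\ell$ and $T_i$ meets $\ell$ symmetrically. The resulting disk therefore has no crossing or junction pieces.

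For the converse, let $\mathcal{D}$ be a modular ribbon disk with no crossings or junctions. Since there are no junctions, every piece has at most two band ends, so the disk is a single long band whose core is an arc. This core arc passes through strips, twists and singularities and ends at two caps. I would isotope the flattened picture so that the core arc $\alpha$ is a generic immersed arc in the plane. Its self-intersections are exactly the singularities, with over/under data recorded by which band is slit. Now ``unfold'': take $D$ to be the arc $\alpha$ with the crossing information inherited from the singularities, closed up to form $J$. Then $\partial \mathcal{D}$ is isotopic to the doubled diagram $D\#-\!D$, with $\ell$ placed as the boundary of a thin neighborhood of $\alpha$ after a planar isotopy that straightens $\alpha$. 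Each twist piece inserts a crossing on $\ell$, giving the tangles $T_i$.

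The step I expect to be the main obstacle is checking that the ribbon singularities correspond exactly to mirrored crossing pairs of $D\#-\!D$. This requires the local model in which a slit band seen from both sides reproduces a crossing together with its mirror. I would handle it with an explicit local picture, and use planar isotopy to put the core arc in a position where the axis of symmetry is globally a straight line.
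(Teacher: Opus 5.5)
The paper gives no proof of this proposition. It quotes Aceto, and the only related argument it contains is the fold/unfold dictionary of Lemma~\ref{lem:knotoid}. Your forward direction is that dictionary read from the symmetric-union side: cut $D \# -\!D$ along $\ell$, fold, and turn the crossings on $\ell$ into twist pieces. It is acceptable as a sketch.

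Your converse, however, has a genuine gap at the unfolding step. In $D \# -\!D$ the knot meets the axis in exactly two points, and after folding these become the two caps, with the core of the band lying in one half-plane. So to unfold you need a simple closed curve $\ell \subset S^2$ that passes through both aglets of $\alpha$ and bounds a disk containing $\alpha$. Such a curve exists only if the two aglets lie in the same complementary region of $\alpha$, and in general they do not. In the unique reduced two-crossing diagram $2_1$ of Lemma~\ref{lem:c2} (Gauss code $XYXY$, the diagram realizing $r_s(6_1)=2$), the closed loop that $\alpha$ traces from $X$ back to $X$ separates the two aglets. No planar isotopy changes this. Your proposed axis does not help either. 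Since $\alpha$ has double points it cannot be straightened, and a thin neighborhood of an arc with $n$ double points is a disk with $n$ holes, so its boundary is not a single reflection axis.

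The missing idea is to run aglet moves (Figure~\ref{fig:move}) in reverse first. Push a cap through each band that separates its aglet from the other aglet's region, always with the cap band as the passing band. Each push is an isotopy of $\partial\mathcal{D}$ that adds one singularity. Once both aglets share a region, take $\ell$ through them and unfold. The partial knot is then the over-closure of $\alpha$, consistent with Lemma~\ref{lem:det}.

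There is also a smaller omission in the local model you flag as the main obstacle. Recording which band is slit does not determine the singularity. Relative to the direction of travel from cap to cap, there are two mirror types, according to which edge of the slit band the passing band crosses over. Only when all singularities have the same type (the global consistency condition in Lemma~\ref{lem:knotoid}) does one boundary layer lie above the other at every crossing between an edge of one layer and an edge of the other. That is what allows the fold to be undone. You need to show that a singularity of the wrong type is isotopic to one of the right type flanked by opposite half twists on the slit band. To do this, rotate the slit band by $\pi$ about its core near the slit while tilting the passing band. The extra half twists are then absorbed into the twist tangles.

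Relatedly, a singularity piece has four band ends, not at most two. The correct statement is that, with no junctions, the abstract core of the disk is an arc.
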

In light of Proposition~\ref{prop:symm}, a modular ribbon disk $\mathcal{D}$ without crossings or junctions is called \emph{symmetric ribbon disk}, and a knot $K$ bounding a symmetric ribbon disk is called a \emph{symmetric ribbon knot}.  Aceto naturally defined the \emph{symmetric ribbon number} $r_s(K)$ of a symmetric ribbon knot to be
\[ r_s(K) = \min\{r(\D) : \D \text{ is a symmetric ribbon disk for } K\}.\]
Immediately, we have
\begin{lemma}\label{lem:obv}
$r_s(K) \geq r(K)$.
\end{lemma}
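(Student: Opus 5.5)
The inequality $r_s(K) \geq r(K)$ follows directly from the definitions, since the minimum defining $r_s(K)$ is taken over a subcollection of the disks used to define $r(K)$. I will argue by containment of the two sets over which the minima are taken. If $K$ is not a symmetric ribbon knot, we adopt the convention $r_s(K) = \infty$ mentioned in the introduction, and the inequality holds trivially. So assume $K$ bounds at least one symmetric ribbon disk.

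The first step is to check that every symmetric ribbon disk $\D$ for $K$ is in particular a ribbon disk for $K$. A symmetric ribbon disk is by definition a modular ribbon disk with no crossings or junctions. A modular ribbon disk is an immersed disk in $S^3$ built from caps, strips, twists, and singularities. Its only self-intersections are the ribbon singularities contributed by the singularity pieces, and its boundary is $K$. Hence $\D$ is a ribbon disk for $K$ in the sense of Section~\ref{sec:prelim}. Moreover, the count $r(\D)$ used in defining $r_s(K)$ is exactly the number of ribbon singularities of $\D$ regarded as an ordinary ribbon disk.

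It follows that the set $\{r(\D) : \D \text{ is a symmetric ribbon disk for } K\}$ is contained in $\{r(\D) : \D \text{ is a ribbon disk for } K\}$. Both are nonempty sets of nonnegative integers, so both minima exist. Since the minimum of a subset is at least the minimum of the ambient set, we get $r_s(K) \geq r(K)$.

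I do not expect a genuine obstacle here. The only point needing care is the first step: confirming that the flattened modular pieces really yield an immersed disk whose singularities are exactly the counted ribbon singularities. This is built into the definition of a modular ribbon disk as given in~\cite{aceto}.
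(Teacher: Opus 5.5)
Your proposal is correct and matches the paper, which treats the inequality as immediate from the definitions: every symmetric ribbon disk is a ribbon disk for $K$ with the same singularity count, so the minimum defining $r_s(K)$ is taken over a subset of the disks defining $r(K)$. Your treatment of the non-symmetric case via the convention $r_s(K)=\infty$ is a harmless addition that the paper leaves implicit.
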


Lamm's work in~\cite{lamm1} and~\cite{lamm2} contains a number of symmetric union presentations for knots.  To find upper bounds on the symmetric ribbon numbers of these knots, we describe a method for converting a symmetric union presentation to a labeled knotoid diagram.  A \emph{knotoid diagram} $\K$ is an immersed arc in $S^2$ with only double points, in which each double point is endowed with crossing information.  We call the endpoints of the arc the \emph{aglets} of $\K$.  The $n$ crossings cut $\K$ into $n+1$ connected arcs called \emph{strands}, and a \emph{labeled knotoid diagram} is a knotoid diagram with each strand labeled with an integer.

\begin{lemma}\label{lem:knotoid}
Every symmetric union presentation induces a labeled knotoid diagram, and vice versa.
\end{lemma}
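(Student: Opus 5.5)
The idea is to use the folding picture from Figure~\ref{fig:symm}. A symmetric union presentation $D^*$ with partial knot $J$ is determined by three pieces of data: the diagram $D$ of $J$ on one side of the axis $\ell$, the positions of the disks $\Delta_1,\dots,\Delta_k$ along $\ell$, and the tangles $T_i$, each of which is a column of crossings on $\ell$ described by an integer. I will encode the half of $D^*$ lying to the left of $\ell$ as a knotoid diagram, and the twist integers as strand labels.

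\textbf{From $D^*$ to a labeled knotoid diagram.} I will carry out the following steps in order.
\begin{enumerate}[(1)]
\item First normalize $D^*$. Since each $\Delta_i$ meets the initial diagram in a symmetric pair of arcs, I isotope so that each $\Delta_i$ is a small disk centered on $\ell$. Then $D\#-\!D$ meets $\Delta_i$ in two arcs, either both crossing $\ell$ or both parallel to $\ell$. Replacing this with a tangle having all crossings on $\ell$ gives, up to flypes, a vertical or horizontal twist region with some integer $n_i$ half-twists.
\item Fold $D^*$ across $\ell$, i.e.\ take the left half $D$. Here $D$ is a knot diagram cut open at the connected-sum band, so it is an arc whose two endpoints lie on $\ell$. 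This arc is the core of the symmetric ribbon disk $\D$: the left and right halves of $D^*$ are the two edges of a band following this core.
\item Read off the knotoid diagram $\K$. Crossings of $D$ away from $\ell$ become crossings of $\K$, since each is a spot where one band passes through the other and so gives a ribbon singularity. The two endpoints of the arc become the aglets, where the band is capped off.
\item Read off the labels. Each twist region $T_i$ sitting on $\ell$ becomes a twist in the band, which can be slid along the core until it lies in some strand of $\K$. I label each strand by the total number of half-twists it carries; twists on the same strand add.
\end{enumerate}

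\textbf{From a labeled knotoid diagram to $D^*$.} Given a labeled knotoid diagram $\K$, I will reverse the construction.
\begin{enumerate}[(1)]
\item Thicken the arc $\K$ into a band.
\item At each crossing, let the over-strand's band pass through the under-strand's band, producing a ribbon singularity.
\item Put a cap at each aglet, and insert the labeled number of half-twists in the middle of each strand.
\end{enumerate}
The result is built only from strips, twists, caps and singularities, so it is a modular ribbon disk with no crossings or junctions. By Proposition~\ref{prop:symm}, its boundary admits a symmetric union presentation. Explicitly, that presentation is $\K\cup$ its mirror image, joined at the aglets, with a twist tangle on $\ell$ for each label.

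\textbf{Main obstacle.} The hard step is making the folding identification rigorous. The picture should be arranged so that the ribbon singularities of the folded disk are exactly the crossings of $D$ off $\ell$, and so that the crossings on $\ell$ are absorbed into twists. I also need the over/under convention at each singularity to match the crossing data of $D$ consistently. I would check this carefully on a single crossing and a single twist region, and then argue locally. I would also record that the correspondence need not be a bijection: twists can be slid across singularities, and the labels are only meaningful up to the moves introduced later. Since the lemma only asks that each object induces one of the other, this does not cause a problem.
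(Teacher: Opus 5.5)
Your proposal is correct and follows the paper's proof. In the forward direction you cut $D\#-D$ along $\ell$, read one half as the knotoid $\K$, and label each strand by the signed twist count it meets. In the converse you turn aglets, strands, and crossings into caps, strips, and singularities, insert the labeled twists, and appeal to Proposition~\ref{prop:symm}. The one detail the paper makes explicit that you only gesture at in your ``main obstacle'' is that at each crossing there are two ways for the over-strand's band to pass through the under-strand's band. These choices must be made globally consistently (traversing one boundary edge from cap to cap, every slit band is crossed over--over) for your explicit description of the result as $\K$ together with its mirror plus axis twists to hold. The bare existence statement via Proposition~\ref{prop:symm} does not need this.
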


\begin{proof}
Suppose that $K$ admits a symmetric union presentation $D^*$ with partial knot $J$ and initial diagram $D \# -\!\!D$.  Cut $D \# -\!\!D$ along its axis of symmetry $\ell$, yielding two arcs, and interpret one of these arcs as a knotoid diagram $\K$.  Note that $\K$ meets each of the disks $\Delta_1,\dots,\Delta_k$ in the definition of a symmetric union presentation.  For each strand of $\K$, label it with the sum of the (signed) number of crossings contained in all disks $\Delta_1,\dots,\Delta_k$ that the strand meets.


Conversely, we can convert a labeled knotoid diagram $\K$ to a symmetric ribbon disk:  First, we ignore the labeling of $\K$, convert each aglet to a cap, convert each arc to a strip, and convert each crossing to a singularity, as shown below in Figure~\ref{fig:ex2}.  Note that there are two different choices of singularity corresponding to each crossing.  We can make either choice, provided that our choices are \emph{globally consistent}, meaning that we can traverse our diagram from one cap to the other so that every time we pass through a singularity, we travel through two over-crossings (corresponding to an under-crossing in $\K$) or we travel through an over- and under-crossing (corresponding to an over-crossing in $\K$).  If our choices are globally consistent, this process produces a diagram of the form $D \# -\!\!D$.  Finally, we add in twists (with signs) corresponding to the labelings to complete the process.  By Proposition~\ref{prop:symm}, this symmetric ribbon disk induces a symmetric union presentation.
\end{proof}

An example of the process of converting a symmetric union presentation to a labeled knotoid diagram is shown in Figure~\ref{fig:symm}.  An example of the reverse process, converting a labeled knotoid diagram to a symmetric ribbon disk, is carried out in Figure~\ref{fig:ex2}.  By convention, 0-labelings are omitted.

\begin{figure}[h!]
    \centering
    \includegraphics[width=0.8\textwidth]{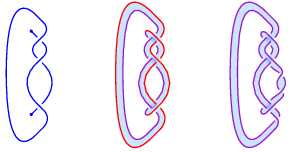}
            \put (-285,35) {$1$}
            \put (-280,77) {$-2$}
            \put (-145,35) {1}
            \put (-141,75) {$-2$}   
     \caption{Converting a labeled knotoid diagram to a symmetric ribbon disk.  In the middle, the red path traverses the diagram in a way that satisfies the global consistency condition (it never passes through a singularity with two under-crossings).  Note further that the knotoid diagram at left is equivalent to the one in Figure~\ref{fig:symm}.}
    \label{fig:ex2}
\end{figure}

Given a knotoid diagram $\K$, the \emph{closure} of $\K$, denoted $\cl(\K)$, is the knot corresponding to a diagram obtained by connecting the aglets of $\K$ with an arc that contains only over-crossings.

\begin{lemma}\label{lem:det}
If $K$ admits a symmetric union presentation inducing labeled knotoid diagram $\K$, then $\det(K) = (\det(\cl(\K)))^2$.
\end{lemma}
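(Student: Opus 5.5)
The plan is to combine Lamm's determinant formula, $\det(K) = (\det(J))^2$ for a symmetric union presentation with partial knot $J$, with an identification of $J$ in terms of the knotoid $\K$. It therefore suffices to show that the partial knot $J$ is isotopic to $\cl(\K)$ (or to its mirror image, which has the same determinant).

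First I will recall how $\K$ is obtained in the proof of Lemma~\ref{lem:knotoid}. We cut the initial diagram $D \# -\!\!D$ along the axis $\ell$, and $\K$ is one of the two resulting arcs. So $\K$ is exactly the diagram $D$ of $J$ with one arc removed, namely the arc that, in the connected sum, was rerouted across $\ell$ to the other side.

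The key step is to compare the closures. The point to check is that the arcs of $D \# -\!\!D$ on one side of $\ell$ are precisely those of $D$ with a small arc $a$ deleted near the connected-sum band. Since that band is a trivial arc crossing nothing, reconnecting the two aglets of $\K$ by a short arc near $\ell$ recovers $D$ itself. The issue is whether the arc the paper uses may pass over other strands. If the original arc $a$ crosses other strands, those crossings cannot lie on $\ell$, because the only strands meeting $\ell$ away from the band are the two arcs through each disk $\Delta_i$. I therefore need to argue that any closing arc joining the aglets, whether passing entirely over or otherwise, yields the same knot up to the usual knotoid ambiguity.

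The main obstacle is exactly this independence of the closure, since knotoid closures are not well defined in general. I expect to resolve it by observing that the aglets of $\K$ lie on the axis $\ell$, where $\ell$ meets $D \# -\!\!D$. In the half-plane containing $\K$, both aglets can be joined by an arc running along $\ell$ and crossing only the remaining strands that touch $\ell$, namely the endpoints of arcs entering the disks $\Delta_i$. I will try first to choose the half-plane picture so that $\K$ lies entirely on one side of $\ell$ and meets $\ell$ only at its two aglets and at the disk arcs. After removing the $\Delta_i$ (that is, returning to $D \# -\!\!D$), $\K$ meets $\ell$ only at its aglets. A closing arc running along $\ell$ then crosses nothing, so the over-crossing closure agrees with the planar closure and equals $D$. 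Hence $\cl(\K) = J$, and Lamm's formula gives $\det(K) = (\det(\cl(\K)))^2$.
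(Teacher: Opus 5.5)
Your proposal is correct and follows the paper's route: combine Lamm's formula $\det(K)=(\det(J))^2$ with the observation that $\cl(\K)$ is a diagram of the partial knot $J$ (up to mirroring, which does not change the determinant). You fill in that observation by noting that the aglets of $\K$ are joined by the small connected-sum arc along $\ell$, which crosses nothing. That arc is therefore trivially an overpass arc, so your worry about the closure being independent of the choice of arc is unnecessary.
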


\begin{proof}
This follows immediately from~\cite{lamm1} and the observation that $\cl(\K)$ is a diagram for the partial knot $J$ corresponding to the symmetric union presentation.
\end{proof}


Given two knotoid diagrams $\K'$ and $\K''$, we can form the \emph{knotoid connected sum} $\K' \# \K''$ by removing small disk neighborhood of one aglet from each diagram and gluing along the resulting boundary curves.  Note that the connected sum may depend on our choices of aglets.  Conversely, a knotoid diagram $\K$ is called \emph{composite} if there exists a simple closed curve $c$ meeting $\K$ once and cutting $S^2$ into two disks such that each contains at least one crossing.  In this case, $c$ can be used to realize $\K$ as $\K' \# \K''$.  If the diagram $\K$ is not composite, we say that $\K$ is \emph{prime}.  See Figure~\ref{fig:connect} for an example.

\begin{figure}[h!]
    \centering
    \includegraphics[width=0.8\textwidth]{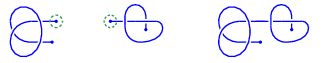}
            \put (-262,35) {$\#$}
            \put (-146,35) {$=$}
                 \caption{An example of the connected sum operation on knotoid diagrams.}
    \label{fig:connect}
\end{figure}

Near the aglet, we note several simplifications.  First, the strand connected to the aglet need not be labeled, since any labeling yields twisting on the induced symmetric ribbon disk that can be eliminated via isotopy, as shown in Figure~\ref{fig:aglet}.  In addition, by construction we have that if a labeled knotoid diagram $\K$ induces a symmetric ribbon disk $\D$, then $r(\D) = c(\K)$, where $c(\K)$ is the crossing number of $\K$.  If the strand leaving the aglet meets an over-crossing of $\K$, we may eliminate the corresponding singularity via isotopy, inducing the move shown in Figure~\ref{fig:move}, which we call an \emph{aglet move}.

\begin{figure}[h!]
    \centering
    \includegraphics[width=0.8\textwidth]{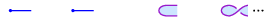}
            \put (-291,11) {$\longleftrightarrow$}
                        \put (-324,20) {$0$}
                   \put (-245,20) {$n$}
            \put (-99,11) {$\longleftrightarrow$}
                 \caption{The labeling of the strand adjacent to the aglet can be changed via isotopy, and so we generally assume this strand is labeled zero.}
    \label{fig:aglet}
\end{figure}

\begin{figure}[h!]
    \centering
    \includegraphics[width=0.8\textwidth]{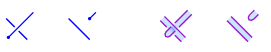}
            \put (-291,27) {$\longleftrightarrow$}
                        \put (-346,47) {$a$}
                         \put (-310,5) {$b$}
                  \put (-272,47) {$a+b$}
            \put (-88,27) {$\longleftrightarrow$}
         \caption{If the strand adjacent to an aglet meets an over-crossing of $\K$, we may eliminate the crossing (and its corresponding singularity) via an aglet move.}
    \label{fig:move}
\end{figure}

It may be possible to simplify a given labeled knotoid diagram by performing Reidemeister moves.  For example, if a knotoid diagram $\K$ admits an R1 move, then the corresponding symmetric ribbon disk can be simplified by eliminating a singularity, as shown in Figure~\ref{fig:R1}.  For an R2 move, the situation is more complicated.  We can perform an R2 move to simplify the a labeled knotoid diagram if and only if the strand adjacent to both under-crossings has label zero, as shown in Figure~\ref{fig:R2}.  Similarly, an R3 move can be performed only when the strand adjacent to two under-crossings has label zero, as shown in Figure~\ref{fig:R3}.

\begin{figure}[h!]
    \centering
    \includegraphics[width=0.6\textwidth]{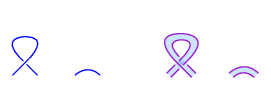}
                \put (-214,33) {$\longleftrightarrow$}
                        \put (-258,15) {$a$}
                         \put (-220,15) {$b$}
                  \put (-189,27) {$a+b$}
            \put (-64,33) {$\longleftrightarrow$}
     \caption{An R1 move on a labeled knotoid diagram translates to a move on the corresponding symmetric ribbon disk.  Labelings change as shown.}
    \label{fig:R1}
\end{figure}

\begin{figure}[h!]
    \centering
    \includegraphics[width=0.6\textwidth]{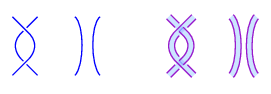}
                    \put (-214,44) {$\longleftrightarrow$}
                        \put (-220,75) {$a$}
                         \put (-220,15) {$b$}
                  \put (-164,40) {$a+b$}
                                    \put (-256,40) {$0$}
            \put (-64,44) {$\longleftrightarrow$}
     \caption{An R2 move on a labeled knotoid diagram translates to a move on the corresponding symmetric ribbon disk.  The strand shown at left must be labeled zero for this move to be possible.}
    \label{fig:R2}
\end{figure}

\begin{figure}[h!]
    \centering
    \includegraphics[width=0.85\textwidth]{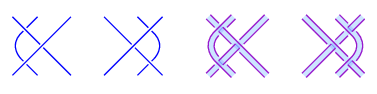}
                        \put (-292,44) {$\longleftrightarrow$}
                                    \put (-364,40) {$0$}
                                    \put (-208,40) {$0$}
            \put (-99,44) {$\longleftrightarrow$}
     \caption{An R3 move on a labeled knotoid diagram translates to a move on the corresponding symmetric ribbon disk.  The strand shown at left must be labeled zero for this move to be possible.}
    \label{fig:R3}
\end{figure}

If two labeled knotoid diagrams $\K$ and $\K'$ are related via a sequence of aglet moves, R1 moves, R2 moves, R3 moves, and planar isotopy, we say that $\K$ and $\K'$ are equivalent.  We have demonstrated that if $\K$ and $\K'$ are equivalent, then they induce symmetric ribbon disks $\D$ and $\D'$ such that $\pd \D$ and $\pd \D'$ are isotopic knots.  As an example, the labeled knotoid diagrams in Figures~\ref{fig:symm} and~\ref{fig:ex2} are equivalent.  Note that neither labeled knotoid diagram admits a simplifying R2 move, since the requisite strand has label $-2$ (not zero).  The symmetric ribbon disk in Figure~\ref{fig:ex2} illustrates how nonzero twisting obstructs the isotopy shown in Figure~\ref{fig:R2}.

\begin{remark}
Our construction differs slightly from that in~\cite{lamm2} in our conventions reverse the signs of the crossings in the corresponding knotoid diagrams.  The reason for this change involves the labeling of strands.  With Lamm's conventions, two different labels may be needed on opposite sides of an over-crossing, while a label can be passed through an under-crossing.  For our conventions, twisting can be passed through over-crossings (but not under-crossings), and so it is enough to label each connected strand.  See Figure~\ref{fig:twist}.
\end{remark}

\begin{figure}[h!]
    \centering
    \includegraphics[width=0.5\textwidth]{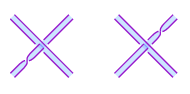}
                            \put (-114,53) {$\simeq$}
     \caption{Twisting on the strip corresponding to an over-crossing of a labeled knotoid diagram can be pushed to either side of the associated singularity.}
    \label{fig:twist}
\end{figure}

We can use labeled knotoid diagrams to obtain upper bounds for symmetric ribbon numbers.  Proposition~\ref{prop:upper} already appears as Lemma 2.1 in~\cite{FMZ}, but we include another proof here using labeled knotoid diagrams.

\begin{proposition}\label{prop:upper}
Suppose that $K$ admits a symmetric union presentation with initial diagram $D \# -\!\!D$ and axis of symmetry $\ell$.  If there are $n$ consecutive over-crossings (or $n$ consecutive under-crossings) in $D$ adjacent to $\ell$, then $K$ bounds a symmetric ribbon disk $\D$ such that $r(\D) = c(D) - n$.
\end{proposition}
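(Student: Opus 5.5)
The plan is to convert the symmetric union presentation into a labeled knotoid diagram using Lemma~\ref{lem:knotoid}, then remove $n$ crossings by repeated aglet moves, and finally convert back.

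\emph{Step 1 (conversion).} Cut the initial diagram $D \# -\!\!D$ along $\ell$, as in the proof of Lemma~\ref{lem:knotoid}, and keep the half coming from $D$. This gives a labeled knotoid diagram $\K$ with $c(\K) = c(D)$. Its induced symmetric ribbon disk has $c(D)$ singularities and is bounded by $K$. The axis $\ell$ meets $D \# -\!\!D$ in two points. Near each of these points, cutting produces an aglet of $\K$. The hypothesis says that, starting from one of these points, the first $n$ crossings of $D$ met while traversing from that aglet are all over-crossings, or all under-crossings, for the strand being traversed.

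\emph{Step 2 (induction on $n$).} First handle the over-crossing case. The strand of $\K$ leaving the aglet meets an over-crossing, i.e.\ it passes over that crossing. After an aglet move (Figure~\ref{fig:move}) this crossing disappears and the labels combine as shown. Since the aglet-adjacent strand's label is irrelevant (Figure~\ref{fig:aglet}), the new diagram has $c(D)-1$ crossings. Its aglet-adjacent strand now begins with the remaining $n-1$ consecutive over-crossings, so we can repeat. After $n$ aglet moves we obtain an equivalent labeled knotoid diagram $\K'$ with $c(\K') = c(D) - n$. By the discussion after Figure~\ref{fig:R3}, equivalent diagrams induce disks with isotopic boundaries. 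Lemma~\ref{lem:knotoid} then gives a symmetric ribbon disk $\D$ for $K$ with $r(\D) = c(\K') = c(D)-n$.

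\emph{Step 3 (under-crossing case).} Here I reduce to the over-crossing case by a convention flip. Mirroring the choice of which half of $D\#-\!\!D$ we keep exchanges over- and under-crossings in the knotoid as seen from the strand. Alternatively, one could reverse the crossing-sign convention discussed in the remark comparing with~\cite{lamm2}, since the global consistency choice in the proof of Lemma~\ref{lem:knotoid} can be made either way. Concretely, taking the half $-D$ instead of $D$ turns $n$ consecutive under-crossings of $D$ into $n$ consecutive over-crossings of the knotoid's aglet strand. Then Step~2 applies.

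\emph{Main obstacle.} The delicate point is this last one: checking that the aglet move is really available in the under-crossing case, i.e.\ that the correspondence between crossings of $D$ adjacent to $\ell$ and over-crossings of $\K$ at the aglet behaves as claimed under the paper's conventions. I would verify it directly on the local picture of a singularity next to a cap. A cap adjacent to a singularity can always be slid back through it, whichever sheet is the ``through'' sheet, provided the orientation of the knotoid is chosen appropriately. This is the content of the aglet move. The twist labels never obstruct the slide, since twisting near an aglet can be isotoped away.
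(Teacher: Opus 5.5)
Your Steps 1 and 2 match the paper: the induced labeled knotoid diagram admits $n$ successive aglet moves, which leaves $c(D)-n$ crossings. The gap is in Step 3, the reduction of the under-crossing case, and it has three parts.

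\emph{Choosing the other half does not exchange over and under.} $-\!\!D$ is the reflection of $D$ across $\ell$ in the diagram plane. That reflection keeps the same strand on top at every mirrored crossing. So the strand of $-\!\!D$ leaving $\ell$ passes \emph{under} the mirrors of the same $n$ crossings. Reading that half with the same conventions gives the planar mirror of your knotoid, and its aglet strand still goes under.

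\emph{The convention-flip alternative is not justified.} You appeal to the freedom in the global consistency condition of Lemma~\ref{lem:knotoid}. That freedom only concerns how each singularity is tilted. At every singularity the through band is dictated by the over-strand of $\K$, so the freedom gives no license to swap over and under.

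\emph{The local check in your last paragraph is false.} A cap can be pulled back through a singularity only when the cap's band is the through sheet there. This is exactly why aglet moves require an over-crossing. If the slide always worked, every labeled knotoid diagram would reduce to zero crossings, and every symmetric ribbon knot would be trivial, contradicting $r_s(6_1)=2$.

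The missing idea is the paper's ``involution of $S^3$'': rotate everything by $\pi$ about $\ell$. This rotation is isotopic to the identity. It carries $\ell$ to itself and the disks $\Delta_i$ to disks symmetric about $\ell$, and it preserves the signs of the crossings on $\ell$. It also carries $-\!\!D$ onto the position of $D$ with all heights negated. The result is a symmetric union presentation of the same knot $K$ whose partial diagram is $\bar D$, that is, $D$ with every crossing switched. In $\bar D$ your $n$ consecutive under-crossings adjacent to $\ell$ are over-crossings, so Step 2 applies verbatim.

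Equivalently, reflect $S^3$ in the projection plane, run Step 2 for the mirror image of $K$, and reflect the resulting disk back; this does not change the number of singularities. Your instinct to ``use the other half'' is correct only if that half is brought over by the rotation, which flips heights, rather than read off as drawn.
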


\begin{proof}
If there are $n$ consecutive over-crossings adjacent to $\ell$, then the induced labeled knotoid diagram $\K$ admits a total of $n$ simplifying aglet moves, inducing a symmetric ribbon disk $\D$ with $c(D) - n$ singularities, so that $r(\D) = c(D) - n$.  In the case of $n$ consecutive under-crossings, we perform an involution of $S^3$ to convert the under-crossings to over-crossings and repeat the argument.
\end{proof}

The example in Figure~\ref{fig:symm} has two over-crossings adjacent to its axis of symmetry, while $c(D) = 5$.  Thus, aglet moves (carried out in Figure~\ref{fig:ex2}) yield a labeled knotoid diagram with three crossings, in turn yielding a symmetric ribbon disk with ribbon number three.

\section{Classifying reduced singular arc diagrams with at most four crossings}\label{sec:singular}

To tabulate symmetric ribbon numbers, we wish to understand all possible symmetric ribbon disks with ribbon number up to four, and by our work in Section~\ref{sec:prelim}, it suffices to understand labeled knotoid diagrams with at most four crossings.  Although it would be a convenient shortcut to appeal to existing knotoid classifications (see~\cite{dimos}, for example), R2 and R3 moves are not allowed in our setting, and so our notion of knotoid equivalence is different from what usually appears in the literature.  To understand knotoid diagrams within our framework, we first consider a knotoid diagram without crossing information, which we call a \emph{singular arc diagram}, an immersed arc in $S^2$ with double points, considered up to planar isotopy.  A crossing of a singular arc diagrams is said to be \emph{incident} to an aglet if the aglet is connected to the crossing via an embedded arc.  To carry out an exhaustive search for low-crossing singular arc diagrams, we eliminate diagrams that admit obvious simplifications.  A singular arc diagram $\lambda$ is \emph{reduced} if
\begin{enumerate}
\item $\lambda$ does not contain an embedded arc meeting the same crossing in two points,
\item if $c$ is a simple closed curve meeting $\lambda$ in a single point, then $c$ bounds a disk containing an aglet and no crossings, and
\item no crossing is incident to both aglets.
\end{enumerate}
If $\lambda$ is unreduced, then any corresponding labeled knotoid diagram admits a simplifying R1 move (if (1) fails), any corresponding knotoid diagram is not prime (if (2) fails), or any corresponding labeled knotoid diagram admits a simplifying aglet move (if (3) fails).

Given a reduced singular arc diagram $\lambda$, we call a crossing incident to an aglet an \emph{aglet crossing} and any crossing that is not an aglet crossing an \emph{interior crossing}.  In addition, we call the arc leaving an aglet crossing opposite the aglet an \emph{aglet arc}.  Note that a reduced singular arc diagram $\lambda$ will have exactly two aglet crossings, and the two aglet arcs will not connect (otherwise $\lambda$ is an immersion of the disjoint union of an arc and a curve, in which case we say $\lambda$ is \emph{disconnected}).

\begin{lemma}\label{lem:c2}
Up to symmetry, there is one reduced singular arc diagram with two crossings, $2_1$, shown at right in Figure~\ref{fig:c2}.
\end{lemma}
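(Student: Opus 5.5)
With only two crossings, a reduced diagram has no room for interior crossings, so the plan is to pin down the order in which the arc visits the crossings and then check that the planar embedding is forced.

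Let $\lambda$ be a reduced singular arc diagram with exactly two crossings $x$ and $y$. As noted before the statement, reducedness forces exactly two aglet crossings, which must be distinct. Since there are only two crossings, both are aglet crossings and there are no interior crossings.

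\textbf{Step 1: the visiting sequence.} Parametrize the arc from one aglet to the other and record the sequence of crossings visited. This is a word of length four in which each of $x$ and $y$ appears twice. The first letter is the crossing incident to the first aglet and the last letter is the crossing incident to the second aglet. Condition (3) forbids these two letters from being equal, so after relabeling the word starts with $x$ and ends with $y$. That leaves $x\,x\,y\,y$, $x\,y\,x\,y$ and $x\,x\,y\,y$ read with middle $x y$ or $y x$; explicitly the candidates are $xxyy$ and $xyxy$.

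\textbf{Step 2: ruling out $xxyy$.} In the word $xxyy$, some subarc of $\lambda$ leaves $x$ and returns to $x$ without meeting any other crossing. This is an embedded loop at a single crossing, so condition (1) fails.

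\textbf{Step 3: realizing $xyxy$.} The only surviving word is $xyxy$. I will argue that its planar realization in $S^2$ is unique up to planar isotopy and reflection; these symmetries are what ``up to symmetry'' in the statement refers to. Consider the closed subarc from the first visit of $x$ to the second. It passes through $y$ once in between, so it forms a loop at $x$ that crosses itself nowhere else. Hence it is an embedded circle $C$ in $S^2$, and by the Jordan curve theorem $C$ bounds two disks.

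The behavior of the remaining arcs is now forced.
\begin{itemize}
\item The initial aglet arc from the first aglet reaches $x$ from one side of $C$.
\item The final segment leaves $x$ on the opposite side of $C$, because $x$ is a transverse crossing.
\item That segment then returns to $y$ on $C$, crosses $C$ there, and ends at the second aglet.
\end{itemize}
Every choice in this description is determined up to a reflection of $S^2$ or reversing the parametrization (swapping the two aglets). This gives the diagram $2_1$ of Figure~\ref{fig:c2}.

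\textbf{Step 4: checking $2_1$ is reduced.} Condition (3) holds because the two aglet crossings are distinct. Condition (1) holds because no embedded loop closes up at a single crossing. For condition (2), any simple closed curve meeting $\lambda$ exactly once separates the two aglets. Since both crossings are linked in the pattern $xyxy$, the disk not containing the crossings must contain just an aglet.

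The main obstacle is Step 3: verifying the uniqueness of the planar embedding and confirming condition (2), as opposed to the purely combinatorial word analysis. I would handle both by the Jordan curve argument on the loop $C$ described above.
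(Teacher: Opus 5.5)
Your reduction to the Gauss words $xxyy$ and $xyxy$ (Steps 1--2) is correct, and it organizes the case analysis more cleanly than the paper's pictorial argument. The interleaving argument for condition (2) in Step 4 also works. The problem is Step 3, the step you yourself flag as the crux.

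There you claim that the final segment leaves $x$ on the side of $C$ opposite the initial aglet arc ``because $x$ is a transverse crossing.'' That claim is false; transversality in fact forces the same side. The loop $C$ leaves $x$ along the outgoing half-branch of the first pass and returns along the incoming half-branch of the second pass. These lie on different strands through $x$, so they are adjacent in the cyclic order of the four half-branches at $x$ and cut off an empty corner. The other two half-branches (the end of the initial aglet arc and the start of the final segment) therefore lie in the same sector, hence in the same disk $R$ bounded by $C$. This is just as both tails of a kink lie on the same side of its loop.

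Your description, if it could be drawn, would have the final segment cross back at $y$ to the first aglet's side, leaving both aglets on the same side of $C$. In $2_1$ the aglets are separated by $C$. That separation is why its closure needs a third crossing and gives the trefoil partial knot behind $\det(K)=9$ in Corollary~\ref{cor:det}.

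The repair is short. The segment $\gamma$ from $x$ to $y$ has no crossings in its interior, so it is a chord of $R$ cutting $R$ into two subdisks. The initial aglet arc must lie in the subdisk bounded by $\gamma$ and the arc of $C$ running from $y$ back to $x$; in the other subdisk it would be adjacent to, rather than opposite, the outgoing half-branch of its own strand. At $y$ the tail crosses $C$ into the other disk $R'$ and ends there.

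A chord of a disk with fixed endpoints, and an arc from a boundary point to an interior point of a disk, are each unique up to isotopy. The two sides of $C$ in $S^2$ are exchanged by a homeomorphism. Together these give the uniqueness that your sentence ``every choice in this description is determined'' asserts without argument.

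One wording fix in Step 4: $C$ itself is an embedded loop closing up at $x$. So condition (1) has to be read, as the paper intends, as forbidding an arc of $\lambda$ with no crossings in its interior from having both ends at the same crossing. For $xyxy$ this holds because no letter is immediately repeated.
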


\begin{proof}
Suppose $\lambda$ is a reduced singular arc diagram with $c(\lambda)=2$ and consider the aglet arc of one of the aglet crossings.  The aglet arc must connect to a non-aglet arc, and up to symmetry, this connection can be accomplished in only one way, as shown at left in Figure~\ref{fig:c2}.  Since $\lambda$ is reduced, all remaining arcs must connect the two distinct crossings, and in $S^2$, there is a unique way to make such connections up to isotopy, shown at right in Figure~\ref{fig:c2}.
\end{proof}

\begin{figure}[h!]
    \centering
    \includegraphics[width=0.8\textwidth]{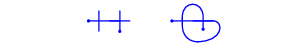}
                                    \put (-127,-10) {$2_1$}
     \caption{At left, an arc must connect the two aglet crossings of a reduced singular arc diagram $\lambda$ with $c(\lambda) = 2$.  At right, completing the diagram to get $\lambda$, which we call $2_1$.}
    \label{fig:c2}
\end{figure}

Moving to crossing number three is only slightly more complicated.

\begin{lemma}\label{lem:c3}
Up to symmetry, there are two reduced singular arc diagrams with three crossings, $3_1$ and $3_2$, shown at bottom left and bottom middle in Figure~\ref{fig:c3}.
\end{lemma}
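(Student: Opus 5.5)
We enumerate by the same strategy as Lemma~\ref{lem:c2}: view a singular arc diagram $\lambda$ with $c(\lambda)=3$ as a 4-valent planar graph with three vertices plus two univalent aglet vertices, and classify the ways to connect the half-edges. Since $\lambda$ is reduced, there are exactly two aglet crossings $x$ and $y$ and one interior crossing $z$. Each crossing has four half-edges. An aglet uses one half-edge at $x$ and one at $y$, leaving $3+3+4=10$ half-edges to pair into five arcs. Moreover, the parametrization as a single arc must pass straight through each crossing: the arc entering a crossing leaves through the opposite half-edge.

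The first step is to trace $\lambda$ from the aglet at $x$. The aglet arc leaves $x$ opposite the aglet. By condition (1) it cannot return to $x$, and by condition (3) together with the fact that the aglet arcs do not connect, it cannot go to the aglet half-edge side in a way that joins $x$ directly to $y$'s aglet arc. So it must go either to $z$ or to a non-aglet-arc half-edge of $y$. This gives two cases, according to whether the aglet arc from $x$ meets $z$ first or meets $y$ first. By the symmetry reversing the arc's orientation, we may also organize the cases by the sequence in which the traversal visits crossings. That sequence is a word of length 6 in $x,y,z$ with each letter appearing twice, beginning with $x$ and ending with $y$.

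In each case we list the Gauss-type words that survive the reducedness conditions. An immediate repeat $xx$ or $zz$ violates (1). A word $x\ldots$ whose second letter is $y$ and whose last letter is $y$ makes $y$ incident to both aglets, violating (3). A word that splits as a concatenation of shorter closed subwords (for example $z$ appearing only inside a segment that can be cut off) violates the primeness condition (2). This leaves a short list, essentially $xzyxzy$, $xyzxzy$ and their reversals/mirrors, and perhaps $xzxyzy$-type words, which must be checked against (2). For each surviving word we determine its realizations in $S^2$, i.e.\ the planar embeddings of the resulting 4-valent graph, and discard words that are not realizable (non-planar Gauss words) and embeddings that create a separating curve meeting $\lambda$ once. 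By uniqueness of planar embeddings of 3-connected graphs, or by direct drawing as in Figure~\ref{fig:c2}, each realizable word yields a unique diagram up to symmetry. The result should be the two diagrams $3_1$ and $3_2$.

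The main obstacle is the planarity and primeness analysis. We must make sure that each abstract connection pattern has exactly one embedding in $S^2$ up to reflection and reversal. We also need to rule out subtle failures of condition (2), where a curve meeting $\lambda$ once cuts off a crossing; this happens for words such as $xzzy$-like subpatterns hidden by the cyclic structure. I would handle this by drawing, in each case, the partial diagram after placing the first two arcs, as in the left of Figure~\ref{fig:c3}, and observing that the remaining half-edges are forced up to isotopy.
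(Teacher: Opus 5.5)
Your Gauss-word route, which enumerates words of length six from $x$ to $y$ and tests them for planarity, is different from the paper's argument but can be made to work. The paper instead notes that the interior crossing $z$ meets four of the five interior arcs, so exactly one arc joins $x$ to $y$, and then completes each of the three placements of that arc by hand. As written, though, your proposal uses a wrong filter and leaves the decisive step undone.

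\textbf{The filter.} Your rule that a word with second letter $y$ makes $y$ incident to both aglets is false. Incidence means being joined to the aglet by an edge of $\lambda$, so only the first and last letters are incident to aglets. Since all your words end in $y$, this rule would delete $xyzxzy$, which is the word of $3_1$, and it contradicts your own list of survivors. The correct count is easy. Conditions (1) and (3) (no immediate repeat, first letter $\neq$ last letter) leave exactly three words up to reversal: $xyzxzy$ (whose reversal is $xzyzxy$), $xzyxzy$, and $xzxyzy$. Condition (2) removes nothing further, because a word of this length that splits into letter-disjoint pieces must contain $xx$ or $yy$.

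\textbf{The gap: the third word.} You propose checking $xzxyzy$ against (2), but it is prime; what kills it is planarity.
The edges $x\to z\to x$ form a simple closed curve $C_1$, and the edges $y\to z\to y$ form a simple closed curve $C_2$, with $C_1\cap C_2=\{z\}$. Each visit to $z$ passes straight through, so the edges of $C_1$ and $C_2$ alternate around $z$. Hence $C_2$ crosses $C_1$ exactly once, contradicting the Jordan curve theorem. This is the Gauss-word counterpart of the disconnected third completion in the paper's proof.

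\textbf{Uniqueness of the embeddings.} Your appeal to uniqueness of embeddings of 3-connected graphs is not available. With the aglets removed, each word gives the same 3-vertex multigraph (double edges $x$--$z$ and $z$--$y$ plus an edge $x$--$y$), which is not 3-connected. In any case you need uniqueness of the \emph{labeled} embedding satisfying the straight-through condition at each crossing. That is a finite check:
\begin{enumerate}
\item use reflection to fix the rotation at $z$;
\item for each of $xyzxzy$ and $xzyxzy$, exactly one of the four rotation choices at $x$ and $y$ yields $4=2-3+5$ faces, i.e.\ is planar.
\end{enumerate}
The realization of $xyzxzy$ has its aglets in different faces; this is $3_1$. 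The realization of $xzyxzy$ is the trefoil shadow cut open along an edge, which has the reflection symmetry; this is $3_2$. Without these steps, your argument ends at ``should be'' rather than proving the lemma.
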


\begin{proof}
Let $\lambda$ be a reduced singular arc diagram with $c(\lambda) = 3$.  A total of five arcs connect the three crossings, and since the interior crossing of $\lambda$ must meet exactly four of these arcs, there must be one arc connecting the aglet crossings (and this arc cannot be the aglet arc for both crossings).  Up to symmetry, there are three possible ways to connect the two aglet crossings via an arc, shown at top in Figure~\ref{fig:c3}.  Since all four remaining arcs must connect to the interior crossing, each configuration at top can be uniquely completed (up to isotopy) to a possible singular arc diagram $\lambda$, shown at bottom in Figure~\ref{fig:c3}.  However, the bottom right diagram is disconnected, leaving only the two possibilities at bottom left and middle.
\end{proof}

\begin{figure}[h!]
    \centering
    \includegraphics[width=0.75\textwidth]{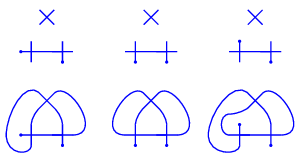}
                                    \put (-279,0) {$3_1$}
                                \put (-166,0) {$3_2$}
     \caption{At top, possible connections between aglet crossings.  At bottom, completing the diagrams.  Note that only the bottom left ($3_1$) and middle ($3_2$) are valid, since the bottom right diagram is disconnected.}
    \label{fig:c3}
\end{figure}

The case that $c(\lambda) = 4$ requires significantly more work.

\begin{proposition}\label{prop:c4}
Up to symmetry, there are eight reduced singular arc diagrams with four crossings, $4_1,4_2,4_3,4_4,4_5,4_6,4_7$, and $4_8$, shown in Figure~\ref{fig:c4}.
\end{proposition}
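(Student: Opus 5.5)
The plan is to run the same combinatorial enumeration used in Lemmas~\ref{lem:c2} and~\ref{lem:c3}, now organized by how the two aglet crossings connect to the two interior crossings.

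Let $\lambda$ be a reduced singular arc diagram with $c(\lambda)=4$, with aglet crossings $a_1,a_2$ and interior crossings $b_1,b_2$.

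\textbf{Counting arcs.} Each crossing has valence four and the two aglet arcs account for one endpoint at each $a_i$. Hence there are $(4\cdot 4-2)/2=7$ arcs joining crossings. Condition (1) forbids an arc from a crossing to itself. Condition (3) says each $a_i$ has three non-aglet endpoints, and no arc may join a crossing to both aglets. Let $e$ be the number of arcs between $a_1$ and $a_2$, and $f$ the number between $b_1$ and $b_2$. Counting endpoints at the interior crossings gives
\[ 8 = 2f + (\text{number of arcs between } \{a_1,a_2\} \text{ and } \{b_1,b_2\}), \]
and at the aglet crossings gives $6 = 2e + (\text{the same number})$. Therefore $f = e+1$ and $e\in\{0,1,2,3\}$. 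The value $e=3$ is impossible, since then the aglet crossings would form a closed-off piece. This leaves the cases $(e,f)=(0,1),(1,2),(2,3)$. Each case gives a small bipartite-plus-loop multigraph of abstract connection patterns. Within each pattern, I also record which arc at each $a_i$ is opposite the aglet, i.e.\ the aglet arc.

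\textbf{Realizing patterns in $S^2$.} For each abstract pattern, I enumerate the planar realizations up to isotopy and reflection. The method follows Lemma~\ref{lem:c3}: first draw the rigid piece (the arcs among the aglet crossings, or the $f$ parallel arcs between $b_1,b_2$, which form a bigon or chain of bigons), then attach the remaining arcs. The cyclic order at each 4-valent vertex must be respected, with opposite edges forming the through-strands of the immersion. Next I discard realizations that fail the conditions:
\begin{enumerate}
\item realizations that are disconnected, which I check by tracing the immersed curve from one aglet and confirming it visits all eight crossing-passages;
\item realizations that violate (2), found by looking for a separating curve meeting $\lambda$ once. This typically occurs when $e=0$ and the pattern splits as two $2_1$-like pieces, i.e.\ a knotoid connected sum.
\end{enumerate}
The survivors are then sorted up to symmetry (reflection of $S^2$ and swapping the aglets). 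I expect exactly eight to remain, matching $4_1,\dots,4_8$ in Figure~\ref{fig:c4}.

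\textbf{Main obstacle.} The hard part is ensuring the planar-realization step is exhaustive without double counting. Here the constraint that opposite half-edges continue the same strand matters, and several embeddings of the same abstract graph can be distinct, or can coincide after a symmetry. To control this, I would encode each candidate as a Gauss word of length $8$ on the letters $1,\dots,4$, each appearing twice, and translate the conditions into word conditions:
\begin{itemize}
\item no letter is repeated consecutively, which is condition (1);
\item the first and last letters differ, which is condition (3);
\item no proper subword is closed in the sense of composite splitting, which is condition (2).
\end{itemize}
I then keep only planar (realizable) words, using the standard even-interlacing criterion. Finally, I reduce modulo relabeling, reversal, and mirror to confirm the count of eight, cross-checking against the case analysis above.
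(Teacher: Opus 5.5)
Your setup matches the paper's: splitting by the number $e$ of arcs joining the aglet crossings is exactly its Cases A--C, and $f=e+1$ is correct. The gap is that the proposition \emph{is} an enumeration, and the proposal stops where the proof should begin. ``I expect exactly eight to remain'' is the statement itself. The missing case work is:
(i) Dispose of $(e,f)=(2,3)$. The paper does this by noting that three arcs between the interior crossings force a closed loop. In $S^2$, these three arcs form a theta graph, and the fourth half-edges at $b_1$ and $b_2$ must both lie in the face containing the connected aglet part. At each $b_j$ that face is bounded by the two arcs forming one full pass through $b_j$, so those two arcs close up into a separate component.
(ii) For $e=1$, start from the three placements of the aglet--aglet arc at the top of Figure~\ref{fig:c3}. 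Attach an aglet arc to an interior crossing, add the single arc missing the second interior crossing, and complete. After discarding disconnected completions this gives $1+1+2$ diagrams.
(iii) For $e=0$, split by whether the two aglet arcs reach the same or different interior crossings, and by how many further arcs join each aglet crossing to its own interior crossing. This gives $1+1+1+1$ diagrams; the two-extra-arcs case is unreduced.
Without this, nothing in your argument rules out a ninth diagram or shows that eight actually occur.

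The device you propose to make the search exhaustive would in fact give the wrong answer. The even-interlacing condition is Gauss's necessary condition for \emph{closed} planar curves. An immersed arc in $S^2$ whose aglets lie in different complementary regions need not satisfy it. Already $2_1$ has Gauss word $1212$, in which the two occurrences of $1$ are separated by a single letter. The reduced three-crossing word $123132$ is also realized in $S^2$ but fails the test. So filtering four-crossing words this way discards genuine diagrams.

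Moreover, an unsigned word does not record the cyclic order of half-edges at the crossings. You would therefore also have to show that each surviving word has a unique planar realization up to symmetry. A correct word-based check uses signed words, i.e.\ a rotation at each crossing in which the two passes alternate. The resulting graph has $c+2$ vertices and $2c+1$ edges, so it embeds in $S^2$ exactly when the rotation system has $c+1$ faces. You could enumerate those, reduce modulo symmetry, and then cross-check against the case analysis above.
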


\begin{figure}[h!]
    \centering
    \includegraphics[width=.95\textwidth]{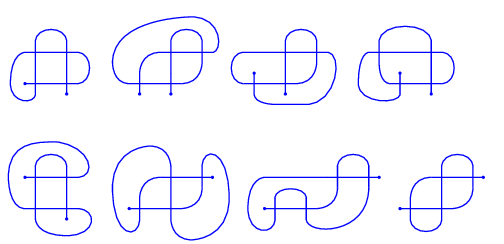}
                                \put (-374,100) {$4_1$}
                            \put (-287,100) {$4_2$}
                            \put (-180,100) {$4_3$}
                            \put (-83,100) {$4_4$}
                            \put (-374,-10) {$4_5$}
                            \put (-270,-10) {$4_6$}
                            \put (-170,-10) {$4_7$}
                            \put (-60,-10) {$4_8$}
     \caption{The eight possible reduced singular arc diagrams with four crossings.}
    \label{fig:c4}
\end{figure}

\begin{proof}
Let $\lambda$ be a reduced singular arc diagram with $c(\lambda) = 4$, so that $\lambda$ contains two aglet crossings and two interior crossings, which are connected by a total of seven arcs.  We break the classification of $\lambda$ into cases, depending on whether there are zero, one, or two arcs connecting the aglet crossings of $\lambda$.

\emph{Case A}:  Suppose that $\lambda$ contains two arcs connecting the aglet crossings.  Then two arcs must connect aglet crossings to interior crossings, which leaves three arcs connecting interior crossings.  However, it is impossible to connect interior crossings with three arcs without creating a closed loop, resulting in $\lambda$ being disconnected, a contradiction.

\emph{Case B}:  Suppose that $\lambda$ contains one arc connecting the aglet crossings.  The three possible configurations for this connection are shown at top in Figure~\ref{fig:c3}.  In each of these three configurations, we know that each of the six remaining arcs must be connected to at least one interior crossing, and so we connect one of the aglet arcs to an interior crossing, as shown in Figure~\ref{fig:c4-10}.  Of the five remaining arcs, four must connect to the additional interior crossing and one does not, with both endpoints connected to one of the structures shown in Figure~\ref{fig:c4-10}.  For each of the three structures, we first add this additional arc without introducing extra components, separating the remaining connections, forcing $\lambda$ to be unreduced, or adding a second arc connecting aglet crossings, after which there is a unique way to add the second interior crossing and the four arcs connected to it.

\begin{figure}[h!]
    \centering
    \includegraphics[width=0.75\textwidth]{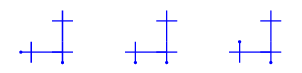}
     \caption{The three structures from Figure~\ref{fig:c3} with an additional interior crossing attached to an aglet arc.}
    \label{fig:c4-10}
\end{figure}

For the first structure, there are two admissible ways to add an arc, shown at top in Figure~\ref{fig:c4-1a}.  Once the additional arc is added, we can uniquely add the other interior crossing and four remaining arcs, shown at bottom in Figure~\ref{fig:c4-1a}.  Note that one of the completions is disconnected.

\begin{figure}[h!]
    \centering
    \includegraphics[width=0.75\textwidth]{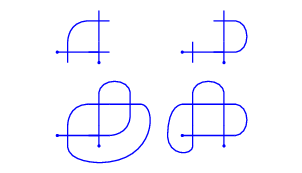}
     \caption{At top, the addition of one arc to the first structure.  At bottom, completing the diagrams by including another interior crossing and four more arcs.  The completion at bottom left is disconnected, so we disregard it.}
    \label{fig:c4-1a}
\end{figure}

With the second structure, there are two valid ways to add this additional arc, shown at top in Figure~\ref{fig:c4-1b}, and then each diagram can be uniquely completed with an additional interior crossing and four connecting arcs, shown at bottom in Figure~\ref{fig:c4-1b}.  As above, one of these completions is disconnected.  With the third configuration, there are two admissible ways to add the additional arc, shown at top in Figure~\ref{fig:c4-1c}, and both of the completions are valid, shown at bottom in the same figure.

\begin{figure}[h!]
    \centering
    \includegraphics[width=0.75\textwidth]{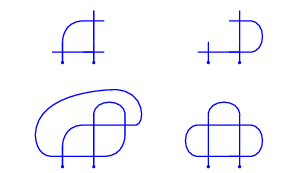}
     \caption{At top, the addition of one arc to the second structure.  At bottom, the completions, where the one at right is disconnected.}
    \label{fig:c4-1b}
\end{figure}

\begin{figure}[h!]
    \centering
    \includegraphics[width=0.75\textwidth]{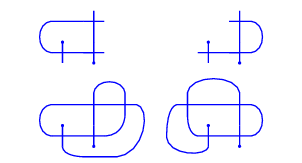}
     \caption{At top, the addition of one arc to the third structure.  At bottom, both completions are valid.}
    \label{fig:c4-1c}
\end{figure}

\emph{Case C}:  Suppose that $\lambda$ contains no arcs connecting the aglet crossings.  Then all arcs connected to an aglet crossing are also connected to an interior crossing.  There are several sub-cases to consider.  First, suppose that the aglet arcs are connected to the same interior crossing.  Unless $\lambda$ is disconnected, there is a unique configuration, shown at left in Figure~\ref{fig:c4-0a}.  Of the five remaining arcs, four are connected to the other interior crossing, and so one more connects an aglet crossing to the first interior crossing.  Up to symmetry, there is only one way to add this arc without disconnecting the other connections, adding another arc connecting aglet crossings, or forcing $\lambda$ to be unreduced, shown at middle in Figure~\ref{fig:c4-0a}, and then the second interior crossing and connecting arcs are completed uniquely up to isotopy, shown at right in Figure~\ref{fig:c4-0a}.

\begin{figure}[h!]
    \centering
    \includegraphics[width=0.75\textwidth]{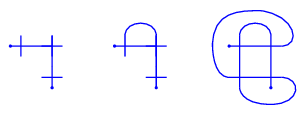}
     \caption{At left, the two aglet arcs connect to the same interior crossing.  At middle, one additional arc is added to this configuration.  At right, the unique completion is valid.}
    \label{fig:c4-0a}
\end{figure}

Next, suppose that the aglet arcs are connected to different interior crossings.  Then five more arcs must be added to the diagram at left in Figure~\ref{fig:c4-0b}.  Suppose first that no additional arcs connect the top aglet crossing to the top interior crossing.  Then two arcs connect the top aglet crossing to the bottom interior crossing, and two arcs connect the bottom aglet crossing to the top interior crossing.  Up to symmetry, the diagram is uniquely determined, as shown at right in Figure~\ref{fig:c4-0b}.

\begin{figure}[h!]
    \centering
    \includegraphics[width=0.75\textwidth]{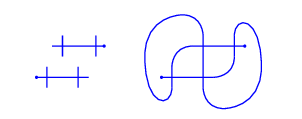}
     \caption{At left, the two aglet arcs connect to different interior crossings.  At right, the unique completion (up to symmetry) with all arcs connecting the top structure to the bottom structure and no arcs connecting the aglet crossings.}
    \label{fig:c4-0b}
\end{figure}

Now, suppose that one additional arc connects the top aglet crossing to the top interior crossing.  Then one additional arc connects the bottom aglet crossing to the bottom interior crossing.  There are two possible configurations, shown at left in Figures~\ref{fig:c4-0c} and~\ref{fig:c4-0d}.  For each configuration, the remaining three arcs must connect aglet crossings to interior crossings.  For the first configuration, there is a unique way to make these connections, up to symmetry, shown at right in Figure~\ref{fig:c4-0c}.  For the second configuration, there are two possible ways to make these connections, shown at center and left in Figure~\ref{fig:c4-0d}, but one (at right) is disconnected.

\begin{figure}[h!]
    \centering
    \includegraphics[width=0.75\textwidth]{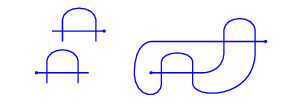}
     \caption{At left, one configuration in which one additional arc is attached to the top structure and one is attached to the bottom structure.  At right, the unique completion up to symmetry.}
    \label{fig:c4-0c}
\end{figure}

\begin{figure}[h!]
    \centering
    \includegraphics[width=0.75\textwidth]{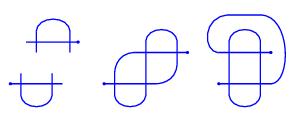}
     \caption{At left, another configuration in which one additional arc is attached to the top structure and one is attached to the bottom structure.  At center and right, the two possible completions.  The completion at right is disconnected.}
    \label{fig:c4-0d}
\end{figure}

Finally, suppose that two additional arcs connect the top aglet crossing to the top interior crossing.  Then the single arc connecting the top structure to the bottom structure will force $\lambda$ to not be reduced.  One example is shown in~\ref{fig:c4-0e}.  This completes the proof.

\begin{figure}[h!]
    \centering
    \includegraphics[width=0.75\textwidth]{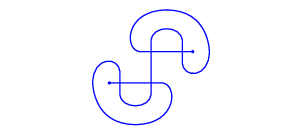}
     \caption{If only one arc connects the top structure to the bottom structure, then the diagram $\lambda$ is not reduced.}
    \label{fig:c4-0e}
\end{figure}

\end{proof}

\section{Alexander polynomials and the sets $\Rr^s_n$}\label{sec:alex}

In~\cite{FMZ} and~\cite{polymath}, the authors used Alexander polynomials to derive new lower bounds for ribbon numbers.  In particular, they defined the set
\[ \Rr_n = \{\Delta_K(t) : r(K) \leq n\}\]
and proved that
\begin{proposition}
For every $n$, the set $\Rr_n$ is finite and computable.
\end{proposition}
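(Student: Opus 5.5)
The plan is to follow the argument of~\cite{FMZ}, since the statement is recalled from there. The key is the Fox-calculus description of the Alexander polynomial of a ribbon knot. If $K$ bounds a ribbon disk $\D$ with $r(\D) \leq n$, then $\D$ is built from $m+1$ disks $D_0,\dots,D_m$ joined by $m$ bands, with $m \leq n$. The standard argument shows that the ribbon disk complement has a presentation with $m+1$ meridional generators $x_0,\dots,x_m$ (one per disk) and $m$ relations, one per band. Each relation has the form $x_j = w\, x_i\, w^{-1}$, where the word $w$ records the disks that the band passes through. The total number of letters appearing across all the words $w$ is bounded by the number of ribbon singularities, so it is at most $n$.

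Next I will invoke the classical fact that $\Delta_K(t) \doteq f(t)f(t^{-1})$, where $f(t)$ is the Alexander polynomial of the ribbon disk group. Here $f(t)$ is the determinant of the $m \times m$ Alexander matrix obtained from the Fox Jacobian of the relations, after abelianizing $x_i \mapsto t$ and deleting one column. Each Fox derivative $\partial(w x_i w^{-1} x_j^{-1})/\partial x_k$, after abelianization, is a polynomial in $t^{\pm1}$. Its coefficients and degree are bounded in terms of the length of $w$. Because the total length of all $w$ is at most $n$, the matrix entries range over a finite set of Laurent polynomials (up to units), and the matrix size $m \leq n$ is bounded. Hence only finitely many determinants $f$ occur, and therefore only finitely many products $f(t)f(t^{-1})$, up to the normalization of $\Delta_K$.

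Computability comes from the same data. The possible band words with total length at most $n$ are finite and can be listed explicitly. One enumerates all of them, computes the abelianized Fox matrices, and takes determinants. This may overcount, since it is not checked that every tuple of words is realized by an actual ribbon disk, but it gives a finite computable superset. Sharpening it to the exact set needs realizability: every presentation of this combinatorial type should come from some ribbon disk whose bands follow the prescribed words. This step is routine, because one can draw such a disk directly.

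The main obstacle is the first step: justifying that the presentation has words whose total length is controlled by the number of singularities. In particular, each singularity must contribute exactly one conjugating letter, and the band tree must contain no further hidden contributions. I would handle this with the Wirtinger-style argument used in~\cite{FMZ}, which reads off the relation for each band as one travels along it.
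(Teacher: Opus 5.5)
The paper does not prove this proposition itself (it is quoted from \cite{FMZ}), and your outline is essentially the standard argument behind that result: a deficiency-one presentation of the ribbon disk group read off a disk-band form, the identity $\Delta_K(t) \doteq f(t)f(t^{-1})$, finiteness of the possible presentations, and exactness by realizing every tree-shaped band configuration. The step you flag is closed by a normalization of the disk, not by the Wirtinger reading itself: the $n$ preimage arcs of the singularities that end on the boundary of the source disk cut it into $n+1$ regions whose dual graph is a tree, and shrinking the source disk onto one subdisk per region (containing that region's interior preimage arcs) plus one band across each boundary arc yields $n+1$ disks and $n$ bands, each passing through exactly one disk, so $m=n$, every relator has the form $x_j = x_k^{\pm 1} x_i x_k^{\mp 1}$, and band twists and band--band crossings only move the $2$-handle attaching curves by homotopy in the boundary of the $1$-handlebody, so they never enter the relators.
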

In~\cite{FMZ}, the authors determined the sets $\Rr_2$ and $\Rr_3$, where $|\Rr_2| = 3$ and $|\Rr_3| = 10$.  This work was extended in~\cite{polymath}, in which the authors computed $\Rr_4$, where $|\Rr_4| = 56$.

Recall the oriented Skein relation for the Alexander and Jones polynomials for oriented links $L_+$, $L_-$, and $L_0$ that differ within a single crossing:

\[ \Delta_{L_+}(t) - \Delta_{L_-}(t) + (t^{-1/2} - t^{1/2})L_0(t) = 0\]
and
\[t^{-1}V_{L_+}(t) - tV_{L_-}(t) + (t^{-1/2}-t^{1/2})V_{L_0}(t) = 0.\]

In what follows, we consistently choose this crossing within a ribbon disk as shown in Figure~\ref{fig:change}.

\begin{figure}[h!]
    \centering
    \includegraphics[width=0.7\textwidth]{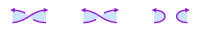}
                            \put (-265,-3) {$L_+$}
                             \put (-157,-3) {$L_-$}
                             \put (-49,-3) {$L_0$}
     \caption{Local pictures of changing or resolving a crossing in a ribbon disk used in the proof of Lemma~\ref{lem:change}.}
    \label{fig:change}
\end{figure}

Here, we define an analogously collection of sets of Alexander polynomials for symmetric ribbon knots.  However, the structure of symmetric ribbon disks is more rigid, and so we define
\[ \Rr^s_n = \{\Delta_K(t) : r_s(K) = n \text{ and $K$ is prime}\}.\]
It follows immediately from the definition that $\Rr^s_n \subset \Rr_n$, and so the set $\Rr^s_n$ is finite.  We will see that this containment is proper for $n=2,3,4$.  Note that if $r_s(K) = n$, then $K$ bounds a symmetric ribbon disk corresponding to a labeled monoid diagram $\K$ with $n$ crossings.  In turn $\K$ can be associated with a singular arc diagram.  We use the following lemma.

\begin{lemma}\label{lem:change}
Suppose $K$ and $K'$ are symmetric ribbon knots bounding disks corresponding to labeled monoid diagrams $\K$ and $\K'$, respectively, which are identical but with potentially different labelings.  If all labelings agree mod 2, then $\Delta_K(t) = \Delta_{K'}(t)$.
\end{lemma}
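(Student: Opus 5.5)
It suffices to show that changing the label on a single strand of $\K$ by $\pm 2$ leaves the Alexander polynomial of the boundary knot unchanged. Any two labelings that agree mod 2 are related by a finite sequence of such changes, so induction on $\sum_i |a_i - a'_i|/2$ (summing over strands) then gives $\Delta_K(t) = \Delta_{K'}(t)$.

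By the construction in Lemma~\ref{lem:knotoid}, the label $a$ on a strand is realized in the symmetric ribbon disk $\D$ as $a$ signed half-twists in the corresponding strip; equivalently, in the symmetric union presentation it is $a$ crossings on the axis of symmetry. Fix a strand with label $a$ and compare the knots $K_a$ and $K_{a+2}$ obtained from labels $a$ and $a+2$, all other data unchanged. Adding two half-twists to a band is a full twist. Locally this means that one of the twist crossings of $K_{a+2}$ can be changed to produce a diagram isotopic to $K_a$. I will choose that crossing as in Figure~\ref{fig:change}, so that $K_{a+2}$ and $K_a$ play the roles of $L_+$ and $L_-$ (or the reverse, depending on the sign of the twist). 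The resolution $L_0$ is then obtained by cutting the strip across, turning that crossing into the oriented smoothing.

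Applying the Skein relation gives
\[ \Delta_{K_{a+2}}(t) - \Delta_{K_a}(t) = \pm (t^{1/2} - t^{-1/2}) \Delta_{L_0}(t). \]
The key claim is that $L_0$ is a split two-component link, so that $\Delta_{L_0}(t) = 0$. To prove this, I will use the fact that in the symmetric picture the two boundary arcs of the strip pass through the twist region in opposite directions. This must be checked from the orientation conventions of Figure~\ref{fig:change}, and it is exactly what makes the crossing in a twist region of a band coherently smoothable. The oriented smoothing then cuts the band $\D$ into two pieces, and the link $L_0$ bounds a disjoint union of two immersed ribbon surfaces. More precisely, $L_0$ is the boundary of $\D$ minus a transverse arc of the strip, so it is a boundary link bounding a disconnected Seifert-type ribbon surface. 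Its Alexander polynomial vanishes because its Seifert form splits into a block form with a zero row; alternatively, $L_0$ bounds two disjoint ribbon disks, hence is a slice boundary link, which forces $\Delta_{L_0} = 0$. Rather than claiming splitness, the cleanest route is probably to note that $L_0$ bounds a disconnected surface whose components are disks, so the Conway polynomial $\nabla_{L_0}$ vanishes.

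The main obstacle is this orientation and identification step. I need to verify that the crossing inserted by the full twist, with the orientation inherited from $K$, gives the oriented resolution that severs the band, rather than one that reconnects along the band and produces a knot. I also need to justify carefully that the resulting two-component link has vanishing Alexander polynomial. I would handle this by drawing the local picture of Figure~\ref{fig:change} and observing that a ribbon disk's strip always has antiparallel boundary orientations, which is why the twist crossings pair up this way. A caveat is the aglet strand, whose label is irrelevant anyway by the isotopy of Figure~\ref{fig:aglet}.
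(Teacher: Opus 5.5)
Your proposal follows the paper's proof. The paper applies the Alexander skein relation at one crossing of a full twist in a strip, so that $L_+$ and $L_-$ are the knots with labels $a+2$ and $a$, and the oriented smoothing cuts the strip to give a two-component ribbon link $L_0$ with $\Delta_{L_0}(t)=0$, citing Eisermann; your reduction to single $\pm 2$ label changes is the induction the paper leaves implicit.

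One caution on the last step. Justify $\Delta_{L_0}(t)=0$ by the fact that $L_0$ is a two-component ribbon (hence strongly slice) link. Do not rely on a disconnected Seifert surface, a boundary-link property, or ``a disconnected surface whose components are disks.'' The two pieces of the cut disk are only immersed and may still pass through each other at ribbon singularities; if they were embedded and disjoint, $L_0$ would simply be the unlink.
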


\begin{proof}
Consider the Skein relation above and the local pictures of a crossing change performed on a symmetric ribbon disk as shown in Figure~\ref{fig:change}, where $L_+$ and $L_-$ are two symmetric ribbon knots corresponding to labeled monoid diagrams with identical labels except for one label differing by two, and $L_0$ is a two-component ribbon link, where $\Delta_{L_0}(t) = 0$ (see, for instance,~\cite{eisermann}).  It follows immediately that $\Delta_{L_+}(t) = \Delta_{L_-}(t)$.
\end{proof}

As a consequence, in order to determine all possible Alexander polynomials for prime knots with symmetric ribbon number $n$, we need only consider all possible reduced singular arc diagrams with $n$ crossings, along with all possible crossing information and mod 2 labelings.  We call such a diagram a \emph{mod 2 labeled knotoid diagram}.  Moreover, we can assume that all aglet arcs meet an aglet crossing as an under-crossing (or else we could perform a simplifying aglet move), and so need only consider crossing information for interior crossings.  Once we find all possible mod 2 labeled knotoid diagrams, we can use SnapPy~\cite{snappy} within Sage~\cite{sage} to compute the corresponding Alexander polynomials.  We carry this out for $n=2,3,4$ below.

For $n=2$, there is only one reduced singular arc diagram with two crossings, $2_1$, shown in Figure~\ref{fig:c2}.

\begin{lemma}\label{lem:r2}
$\Rr^s_2 = \{1-2t+3t^2-2t^3+t^4, \, 2-5t+2t^2\}$.
\end{lemma}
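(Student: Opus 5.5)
By the reduction following Lemma~\ref{lem:change}, I only need to enumerate mod 2 labeled knotoid diagrams whose underlying singular arc diagram is $2_1$, which is the unique reduced two-crossing diagram by Lemma~\ref{lem:c2}. Then I compute the Alexander polynomials of the induced symmetric ribbon knots. If $r_s(K)=2$, then $K$ bounds a symmetric ribbon disk coming from a labeled knotoid diagram $\K$ with exactly two crossings. The singular arc diagram of $\K$ must be reduced; otherwise an R1 move, an aglet move, or a connected sum decomposition would contradict either minimality or primeness. So it is $2_1$.

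In $2_1$ both crossings are aglet crossings. After excluding simplifying aglet moves, each aglet arc passes under at its aglet crossing. This fixes the crossing information entirely, up to the global reflection used in Proposition~\ref{prop:upper}, which changes $K$ to its mirror and does not change $\Delta_K$. The diagram $2_1$ has three strands. The two strands at the aglets may be taken to have label zero, by the isotopy of Figure~\ref{fig:aglet}. So there is a single free label, on the middle strand. By Lemma~\ref{lem:change}, only its parity matters, which leaves two cases: label $0$ and label $1$.

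Next I identify the knot in each case. With middle label $0$, the underlying knotoid is the two-crossing knotoid whose closure is the trefoil; the associated $D\#-D$ is the square knot $3_1\#\overline{3_1}$, with Alexander polynomial $(1-t+t^2)^2 = 1-2t+3t^2-2t^3+t^4$. I expect the middle strand label that makes the knot prime to be a nonzero even value such as $2$, and the result to be $8_{20}$ or a similar knot with the same Alexander polynomial. Lemma~\ref{lem:change} guarantees that any even label gives this polynomial. With an odd label, I would convert the diagram to a symmetric union (as in Figure~\ref{fig:ex2}) and check, by SnapPy or by hand, that it is $6_1$, with $\Delta=2-5t+2t^2$. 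A consistency check is available: Lemma~\ref{lem:det} gives $\det(K) = \det(\cl(\K))^2$. Here $\cl(\K)$ is the trefoil, so $\det=9$ in both cases, matching $\Delta(-1)=9$ for both polynomials.

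Finally, I must show that both polynomials are realized by \emph{prime} knots with $r_s$ exactly $2$. Take $6_1$ (odd label) and $8_{20}$ or $9_{46}$ (even nonzero label). These knots are nontrivial, so $r_s\ge r\ge 2$, and the disks above give equality. The main obstacle is this correct identification of knots: confirming that the two-crossing diagram with a suitable even label yields a prime knot, so that the square knot is not the only knot with the first polynomial. Here I would try explicit small labels and identify the resulting knots with SnapPy.
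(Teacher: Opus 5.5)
Your proposal is correct and follows the paper's proof: you restrict to the unique reduced diagram $2_1$, observe that the aglet conditions fix all crossing information and leave only the middle strand's label, which matters only mod 2 by Lemma~\ref{lem:change}, and compute the two resulting polynomials; you also make explicit the realization direction (each polynomial comes from a prime knot with $r_s=2$), which the paper leaves implicit. One small slip: $9_{46}$ has $\Delta = 2-5t+2t^2$, so it comes from an odd label and realizes the same polynomial as $6_1$, whereas $8_{20}$ is the correct prime example for the even class.
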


\begin{proof}
The reduced singular arc diagram $2_1$ gives rise to two possible mod 2 labeled knotoid diagrams, and the corresponding Alexander polynomials are as indicated.
\end{proof}

For $n=3$, there are two reduced singular arc diagrams with three crossings, $3_1$ and $3_2$, shown in Figure~\ref{fig:c3}.  We make our work easier by introducing a new move.  A \emph{leaf isotopy} (terminology taken from a similar move in~\cite{polymath}) on a labeled knotoid diagram is the combination of moves shown in Figure~\ref{fig:leaf}.

\begin{figure}[h!]
    \centering
    \includegraphics[width=0.8\textwidth]{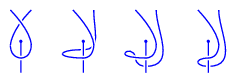}
                                        \put (-219,54) {$0$}
                                    \put (-149,18) {$0$}
     \caption{A leaf isotopy is a combination of an aglet move, an R3 move, and an R1 move as shown.  The only relevant labeling is the zero-labeling of the under-strand in the R3 move occurring between the two middle frames.}
    \label{fig:leaf}
\end{figure}

\begin{lemma}\label{lem:r3}
\begin{eqnarray*}
\Rr^s_3 &=& \{1, \\
& & 1 - t - t^2 + 3t^3 - t^4 - t^5 + t^6, \\
& & 1 - 3t^2 + t^4, \\
& & 1 - 3t + 5t^2 - 7t^3 + 5t^4 - 3t^5 + t^6, \\
& & 1 - 6t + 11t^2 - 6t^3 + t^4, \\
& & 2 - 6t + 9t^2 - 6t^3 + 2t^4\}.
\end{eqnarray*}
\end{lemma}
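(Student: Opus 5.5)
By Lemma~\ref{lem:c3}, any prime knot $K$ with $r_s(K)=3$ bounds a symmetric ribbon disk whose labeled knotoid diagram $\K$ has three crossings and reduced underlying singular arc diagram $3_1$ or $3_2$. Reducedness holds because a non-reduced diagram would admit a simplifying R1 or aglet move, contradicting minimality, or would be composite, contradicting primality of $K$. By Lemma~\ref{lem:change}, $\Delta_K(t)$ depends only on the mod 2 labeled knotoid diagram. So I will enumerate all mod 2 labeled knotoid diagrams on $3_1$ and $3_2$, compute their Alexander polynomials in SnapPy, and then determine which of these actually occur for knots with $r_s(K)=3$ (rather than $r_s\le 2$).

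\textbf{Step 1: reduce the crossing data.} As in the paragraph after Lemma~\ref{lem:change}, each aglet arc may be assumed to meet its aglet crossing as an under-crossing. Otherwise an aglet move reduces the crossing number. Hence only the single interior crossing carries a free choice of crossing information. The strands adjacent to the aglets may be labeled $0$ (Figure~\ref{fig:aglet}). This leaves two interior strands, each labeled $0$ or $1$ mod 2. So each of $3_1$ and $3_2$ yields at most $2\cdot 2^2=8$ mod 2 labeled diagrams, and symmetries of the diagram (the reflection swapping the aglets, mirroring) cut this down further.

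\textbf{Step 2: prune with moves.} For each diagram in which some interior strand adjacent to the relevant under-crossings is labeled $0$, I will check whether an R2 move, R3 move, or leaf isotopy (Figure~\ref{fig:leaf}) applies. Such a move would produce an equivalent diagram with fewer crossings, so those knots have $r_s\le 2$. Their polynomials then lie in $\Rr^s_2$ or are trivial, and they should not be counted unless they are also realized by a genuinely minimal diagram. I expect this is how the leaf isotopy enters: it disposes of several of the configurations on $3_2$ (or $3_1$) outright.

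\textbf{Step 3: compute and certify minimality.} Convert each surviving diagram to a symmetric union via Lemma~\ref{lem:knotoid}, choosing representative integer labels. Compute $\Delta_K$ and collect the resulting list. To confirm that each listed polynomial actually lies in $\Rr^s_3$, I must exhibit a prime knot with $r_s=3$ realizing it. For polynomials not in $\Rr^s_2\cup\{1\}$, this follows from Lemma~\ref{lem:r2} once the knot is prime. The delicate cases are $\Delta=1$, and any polynomial equal to one in $\Rr^s_2$. For these I will identify a specific prime knot (for example via its determinant, using Lemma~\ref{lem:det} and $r(K)\ge 3$ from \cite{FMZ}) and argue $r_s\ge3$ from $r_s\ge r$. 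In particular, a nontrivial prime knot with trivial Alexander polynomial cannot have $r_s=2$, because $\Rr^s_2$ omits $1$.

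The main obstacle is Step 2 together with this certification. One must verify that every omitted candidate polynomial arises only from reducible diagrams, and that each retained polynomial is realized by some prime knot with symmetric ribbon number exactly $3$ (not a composite or a knot with $r_s\le 2$). I would handle this by identifying the knots in KnotInfo and checking that their $r$ values from \cite{FMZ} equal $3$.
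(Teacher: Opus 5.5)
Your overall approach is the paper's: reduce to labeled diagrams on $3_1$ and $3_2$, pass to mod 2 classes via Lemma~\ref{lem:change}, and compute $\Delta$ with SnapPy. With aglet crossings under and aglet strands unlabeled, that is eight classes per diagram. The error is in Step 2, the step you call the main obstacle, and it has two parts.

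First, R3 moves and leaf isotopies do not lower the crossing number. A leaf isotopy (aglet move, R3, R1) preserves crossing number; in the paper it turns a three-crossing diagram on $3_1$ into a three-crossing diagram on $3_2$. Its role there is to show that one interior-crossing choice on $3_1$ is equivalent to a diagram on $3_2$. Combined with the reflection symmetry of $3_2$ (your ``reflection swapping the aglets''), this makes $3_2$ redundant, so only the eight classes on $3_1$ are computed. It proves nothing about $r_s\le 2$.

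Second, you cannot discard a mod 2 class because its representative with some label equal to $0$ admits an R2 move. R2 and R3 need that label to be exactly $0$. The same class also contains labels $\pm2,\pm4,\dots$, which block the move but give the same polynomial by Lemma~\ref{lem:change}. The paper's running example is a counterexample:
\begin{itemize}
\item The three-crossing diagram for $12n_{313}$ (Figures~\ref{fig:symm} and~\ref{fig:ex2}) has a bigon whose under-strand is labeled $-2$.
\item Changing that label to $0$ stays in the same class, permits an R2 move, and collapses the diagram to the unknot.
\item Yet $12n_{313}$ is prime with $r_s=3$ and $\Delta=1$.
\end{itemize}
So Step 2 as written would throw away a class whose polynomial genuinely lies in $\Rr^s_3$. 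Your proviso ``unless realized by a genuinely minimal diagram'' rescues this only if it ranges over all integer representatives of each class, and then Step 2 does no work.

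The fix is to delete Step 2. The inclusion of $\Rr^s_3$ in the six-element set follows from computing every class, exactly as the paper does. Your Step 3 then supplies the reverse inclusion, which the paper's proof leaves implicit. Prime witnesses with $r_s=3$ are in the tables:
\begin{itemize}
\item $12n_{313}$ for $1$;
\item $11n_{49}$ for $1-3t^2+t^4$;
\item $12n_{214}$ for $1-t-t^2+3t^3-t^4-t^5+t^6$;
\item $8_9$ for $1-3t+5t^2-7t^3+5t^4-3t^5+t^6$;
\item $10_{137}$ for $1-6t+11t^2-6t^3+t^4$;
\item $8_8$ for $2-6t+9t^2-6t^3+2t^4$.
\end{itemize}
Take care when ``choosing representative integer labels.'' The all-zero labeling on the diagram whose closure is $4_1$ gives the composite knot $4_1\#4_1$, which has polynomial $1-6t+11t^2-6t^3+t^4$. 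A prime witness for that polynomial, such as $10_{137}$, must therefore come from other labels.
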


\begin{proof}
Note that $3_2$ has a reflection symmetry, and so both choices for the interior crossing will produce identical Alexander polynomials.  In addition, one of the choices for the interior crossing of $3_1$ can be transformed a leaf isotopy into $3_2$ as shown in Figure~\ref{fig:moves3}.  Thus, we need only consider $3_1$.  There are two choices for the interior crossing and two choices for the labels of a mod 2 labeled knotoid diagram associated to $3_1$, yielding eight possibilities.  We compute the Alexander polynomials of the corresponding knots, for a total of six distinct polynomials as indicated.
\end{proof}

\begin{figure}[h!]
    \centering
    \includegraphics[width=0.6\textwidth]{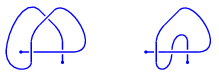}
     \caption{Converting a labeled knotoid diagram coming from $\lambda_1$ to one coming from $\lambda_2$ via a leaf isotopy.}
    \label{fig:moves3}
\end{figure}

Next, we turn to $\Rr^s_4$.  Theorem~\ref{thm:r4}, stated in Section~\ref{sec:intro}, asserts that the 27 elements of $\Rr^s_4$ are given in Table~\ref{table:r4}.  The proof is similar to the proofs of Lemmas~\ref{lem:r2} and~\ref{lem:r3} above, but we relegate its proof to the appendix in Section~\ref{sec:app} due to its length.

\begin{table}[!ht]
    \centering
    \begin{tabular}{|l|l|l|l|}
    \hline
        Det & Alexander Polynomial \\ \hline
        1 & $1$ \\ 
        1 & $1-3t^2+t^4$ \\ 
        1 & $1-t-t^2+3t^3-t^4-t^5+t^6$ \\ 
        1 & $1-2t+t^2 +2t^3 -5t^4 +2t^5 +t^6 -2t^7 +t^8$ \\ 
        1 & $1-2t^2 +3t^4 -2t^6 +t^8$ \\ 
        1 & $1 -3t-t^2 +7t^3 -t^4 -3t^5 +t^6$ \\
        1 & $1-3t+2t^2+3t^3-7t^4+3t^5+2t^6-3t^7+t^8$ \\
        1 & $2 -3t-2t^2 +7t^3 -2t^4 -3t^5 +2t^6$ \\ 
        1 & $2-5t^2 +2t^4$ \\ \hline
        9 & $1-t-t^3 +3t^4 -t^5 -t^7 +t^8$ \\ 
        9 & $1-t+t^2 -3t^3 +t^4 -t^5 +t^6$ \\ 
        9 & $1 - t - 3t^2 + 7t^3 - 3t^4 - t^5 + t^6$ \\ 
        9 & $1-2t+3t^2-2t^3+t^4$ \\ 
        9 & $1-2t+4t^3 -7t^4 +4t^5 - 2t^7 + t^8$ \\
        9 & $2-5t+2t^2$ \\ \hline
        25 & $1-2t+3t^2 -4t^3 +5t^4 - 4t^5 + 3t^6 - 2t^7 + t^8$ \\ 
        25 & $1 - 3t + 5t^2 - 7t^3 + 5t^4 - 3t^5 + t^6$ \\ 
        25 & $1 - 6t + 11t^2 - 6t^3 + t^4$ \\ 
        25 & $2 - 6t + 9t^2 - 6t^3 + 2t^4$ \\         
        25 & $6-13t+6t^2$ \\ \hline
        49 & $1-3t+6t^2 -9t^3 +11t^4 -9t^5 +6t^6 -3t^7 +t^8$ \\ 
        49 & $1-4t+6t^2 -8t^3 +11t^4 -8t^5 +6t^6 -4t^7 +t^8$ \\
        49 & $1-5t+11t^2 -15t^3 +11t^4 - 5t^6 + t^6$ \\ 
        49 & $2-6t+10t^2 -13t^3 +10t^4 -6t^5 +2t^6$ \\ 
        49 & $2-12t+21t^2 -12t^3 +2t^4$ \\ 
        49 & $3-12t+19t^2 -12t^3 +3t^4$ \\ 
        49 & $4-12t+17t^2 -12t^3 +4t^4$ \\ \hline
    \end{tabular}
	\caption{Elements of $\Rr^s_4$, ordered by determinant}
	\label{table:r4}
\end{table}

\section{Bounds from determinants}\label{sec:det}

Recall that the determinant $\det(K)$ of a knot $K$ is given by $\det(K) = |\Delta_K(-1)|$.  In~\cite{FMZ}, the authors proved that for a ribbon knot $K$,
\[ \det(K) \leq (2^{r(K)} - 1)^2.\]
For low-complexity ribbon knots, this implies
\begin{itemize}
\item If $r(K) = 2$, then $\det(K) \leq 9$.
\item If $r(K) = 3$, then $\det(K) \leq 49$.
\item If $r(K) = 4$, then $\det(K) \leq 225$, and so on.
\end{itemize}

By evaluating all polynomials in $\Rr^s_2$, $\Rr^s_3$, and $\Rr^s_4$ at $-1$, we obtain the following immediate corollary of Lemmas~\ref{lem:r2} and~\ref{lem:r3} and Theorem~\ref{thm:r4}.

\begin{corollary}\label{cor:det}
Suppose $K$ is a prime symmetric ribbon knot.
\begin{itemize}
\item If $r_s(K) = 2$, then $\det(K) = 9$
\item If $r_s(K) = 3$, then $\det(K) = 1$ or $25$.
\item If $r_s(K) = 4$, then $\det(K) \leq 49$.
\end{itemize}
\end{corollary}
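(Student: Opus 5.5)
The corollary follows by evaluating every polynomial in the relevant set at $t=-1$. By definition of $\Rr^s_n$, if $K$ is prime with $r_s(K)=n$, then $\Delta_K(t)\in\Rr^s_n$. Since $\det(K)=|\Delta_K(-1)|$, the possible determinants of such knots are exactly the values $|p(-1)|$ for $p\in\Rr^s_n$. So the plan is to run through the lists in Lemma~\ref{lem:r2}, Lemma~\ref{lem:r3}, and Theorem~\ref{thm:r4} and record the absolute values at $-1$.

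For $n=2$, Lemma~\ref{lem:r2} gives two polynomials. We have $1+2+3+2+1=9$, and $|2+5+2|=9$. So every prime knot with $r_s(K)=2$ has $\det(K)=9$.

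For $n=3$, Lemma~\ref{lem:r3} gives six polynomials, with the following values at $-1$:
\begin{itemize}
\item $1$ gives $1$;
\item $1+1-1-3-1+1+1=-1$, so $|{-1}|=1$;
\item $1-3+1=-1$, so $|{-1}|=1$;
\item $1+3+5+7+5+3+1=25$;
\item $1+6+11+6+1=25$;
\item $2+6+9+6+2=25$.
\end{itemize}
Hence $\det(K)\in\{1,25\}$. As a consistency check, Lemma~\ref{lem:det} says the determinant must be an odd perfect square, and both values are.

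For $n=4$, Table~\ref{table:r4} already lists the determinant of each of its 27 entries, and the largest is $49$. Recomputing a few entries confirms the column; for instance, $|4+12+17+12+4|=49$ and $|6+13+6|=25$.

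There is no real obstacle: the argument is finite arithmetic, and all the substantive work sits in the cited classification results, in particular Theorem~\ref{thm:r4}, which is proved in the appendix.
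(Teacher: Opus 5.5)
Your proposal is correct and follows the paper's own argument: the corollary is obtained by evaluating every polynomial in $\Rr^s_2$, $\Rr^s_3$, and $\Rr^s_4$ (from Lemmas~\ref{lem:r2} and~\ref{lem:r3} and Theorem~\ref{thm:r4}) at $t=-1$. Your arithmetic checks out, and, as you say, all the substantive content lies in those classification results.
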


In this section, we obtain a similar inequality for knots such that $r_s(K) = 5$.  Absent a version of Theorem~\ref{thm:r4} in this case, we take a different approach.  Given a knot diagram $D$, define the \emph{maximal overpass length} $\ell(D)$ to be the largest number of consecutive over-crossings contained in a single strand of $D$.  This quantity is connected to knotoid diagrams via the next lemma.  Recall that given a knotoid diagram $\K$, the \emph{closure} $\cl(\K)$ is the knot obtained by connecting the aglets with an arc that contains only over-crossings.

\begin{lemma}\label{lem:overpass}
Suppose $\K$ is a knotoid diagram with closure $\cl(\K)$.  Then there is a diagram $D$ for $\cl(\K)$ such that
\[ c(D) - \ell(D) \leq c(\K).\]
\end{lemma}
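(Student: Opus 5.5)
The plan is to take $D$ to be the diagram that defines the closure itself. Start from the knotoid diagram $\K$, with $c(\K)=n$ crossings, and join its two aglets by an arc $\alpha$ in $S^2$ that is transverse to $\K$ and misses every crossing of $\K$. Declare $\alpha$ to pass over $\K$ at every point of $\alpha \cap \K$. The resulting closed curve $D = \K \cup \alpha$ is a knot diagram for $\cl(\K)$ by definition. Its crossings are the $n$ crossings of $\K$ together with the $m = |\alpha \cap \K|$ new crossings along $\alpha$, so $c(D) = n + m$.

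Next I bound $\ell(D)$ from below. All $m$ crossings along $\alpha$ are over-crossings for the strand containing $\alpha$, and they occur consecutively along that strand. The strand of $D$ containing $\alpha$ may also continue past the aglets into $\K$ and pick up further over-crossings. Either way it contains at least $m$ consecutive over-crossings, so $\ell(D) \geq m$. Hence
\[ c(D) - \ell(D) \leq (n+m) - m = n = c(\K). \]

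The one point to check is that every crossing of $D$ is a transverse double point. This holds because $\alpha$ can be chosen generic: transverse to $\K$, missing the crossings of $\K$, and embedded.

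The only real obstacle is this bookkeeping convention: whether $\ell(D)$ counts crossings along a maximal overpass, and whether a point where $\alpha$ meets $\K$ could be a tangency or coincide with a crossing of $\K$. Choosing $\alpha$ generic handles the second issue. For the first, the inequality $\ell(D) \geq m$ only needs the overpass along $\alpha$ to have length at least $m$, and that is immediate. No minimality of $\alpha$ is needed.
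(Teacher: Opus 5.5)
Your proof is correct and is essentially the paper's argument. Like the paper, you take $D$ to be the closure diagram, observe that the closing arc contributes $m = c(D) - c(\K)$ crossings that are all consecutive over-crossings on one strand, and conclude $\ell(D) \geq m$. Your genericity remarks about the closing arc are extra care that the paper leaves implicit.
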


\begin{proof}
Let $D$ be a diagram for $\cl(\K)$ obtained by connecting the aglets of $\K$ with an overpass arc, where $D$ contains $n$ crossings, so that $n = c(D) - c(\K)$.  It follows that $n \leq \ell(D)$, so that $c(D) - c(\K) \leq \ell(D)$.  The desired statement follows.
\end{proof}

In~\cite{kidwell}, Kidwell connected $c(D) - \ell(D)$ to the degree of the $Q$-polynomial $Q_K(z)$ of the corresponding knot $K$, where the $Q$-polynomial is a specialization $Q_K(z) = F(1,z)$ of the Kauffman polynomial $F_K(a,z)$.  Translated to the Kauffman polynomial, Kidwell proved

\begin{theorem}\cite{kidwell}\label{thm:kid}
Let $D$ be a diagram of a knot $K$.  Then
\[ \deg_z(F_K(a,z)) \leq c(D) - \ell(D).\]
\end{theorem}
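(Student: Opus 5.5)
We would prove Theorem~\ref{thm:kid} by induction, working with Kauffman's unnormalized bracket-type invariant $\Lambda_D(a,z)$ of unoriented diagrams. It satisfies $F_K(a,z) = a^{-w(D)}\Lambda_D(a,z)$, so the two have the same $z$-degree. Its defining relations are
\[ \Lambda_{D_+} + \Lambda_{D_-} = z\left(\Lambda_{D_0} + \Lambda_{D_\infty}\right), \]
together with $\Lambda = a^{\pm 1}\Lambda$ under a positive or negative R1 curl, and $\Lambda_{\bigcirc} = 1$. A split unknotted component multiplies $\Lambda$ by $\delta = (a+a^{-1})z^{-1} - 1$, which has $z$-degree $\leq 0$. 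Since smoothings produce links, we prove the more general statement for link diagrams: if $D$ is a diagram of a link $L$ and $A$ is a strand of $D$ passing over $\ell$ consecutive crossings, then $\deg_z \Lambda_D \leq c(D) - \ell$. The theorem follows by taking $A$ to realize $\ell(D)$.

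Fix $A$ and a basepoint at its start. Traverse the component containing $A$ from the basepoint, then the remaining components one at a time from chosen basepoints. Call a crossing \emph{good} if it is first encountered as an over-crossing, and \emph{bad} otherwise. Every crossing on $A$ is good, because $A$ is traversed first and passes over each of them. We induct on the pair $(c(D) - \ell, \#\text{bad crossings})$ in lexicographic order.

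\emph{Base case.} If there are no bad crossings, $D$ is a descending diagram. Such a diagram is a stacked unlink whose components are individually descending, so $\Lambda_D = a^{k}\delta^{\mu-1}$ for some integer $k$. This has $z$-degree $\leq 0 \leq c(D)-\ell$.

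\emph{Inductive step.} Otherwise choose a bad crossing $x$. It is not one of the $\ell$ over-crossings of $A$. The skein relation gives
\[ \Lambda_D = -\Lambda_{D'} + z\left(\Lambda_{D_0} + \Lambda_{D_\infty}\right), \]
where $D'$ is $D$ with $x$ switched. The diagram $D'$ has the same $c$ and the same $A$, but one fewer bad crossing, so by induction $\deg_z \Lambda_{D'} \leq c-\ell$. Each smoothing $D_0, D_\infty$ has $c-1$ crossings. Since $x$ is not among the over-crossings of $A$, the arc $A$ survives intact in the smoothed diagram and still passes over the same $\ell$ crossings. Hence by induction $\deg_z \Lambda_{D_0}, \deg_z \Lambda_{D_\infty} \leq c-1-\ell$, and the factor $z$ raises this to at most $c-\ell$. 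Combining the three terms gives $\deg_z \Lambda_D \leq c(D)-\ell$.

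The main obstacle is justifying that the overpass survives in the smoothings. The crossing $x$ may be the under-crossing at an endpoint of $A$, and smoothing there could in principle merge $A$ with another arc or reroute its end. We would check that in either smoothing, a neighborhood of $A$ containing its $\ell$ over-crossings is untouched. The smoothed diagram then contains a strand passing over at least those $\ell$ crossings. Its overpass may be longer, but that only strengthens the bound. The induction must be stated with "at least $\ell$" so that the new basepoint and traversal order can be chosen afresh in $D_0$ and $D_\infty$. We would also check that the base case is genuinely a split unlink even when $A$'s component is linked with others; this holds because a descending traversal with components processed in order is stacked in height.
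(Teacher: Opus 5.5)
Your proof is correct. The paper gives no proof of this statement and simply cites Kidwell, and your argument is essentially the standard skein-theoretic proof of Kidwell's bound: a descending diagram is the base case, you switch crossings that do not lie on the bridge, and you observe that the bridge survives in both smoothings $D_0$ and $D_\infty$, each of which has one fewer crossing. Running it on $\Lambda$ rather than on $Q_K(z)=F_K(1,z)$ proves the bound for $\deg_z F_K(a,z)$ directly. That is the form the paper actually uses, and it does not formally follow from a bound on $\deg_z Q_K$, since setting $a=1$ can only lower the $z$-degree.
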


Combining Lemma~\ref{lem:overpass} and Theorem~\ref{thm:kid}, we have

\begin{corollary}\label{cor:f}
Let $\K$ be a knotoid diagram.  Then
\[ \deg_z(F_{\cl(\K)}(a,z)) \leq c(\K).\]
\end{corollary}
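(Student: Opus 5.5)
The statement follows by chaining Lemma~\ref{lem:overpass} with Theorem~\ref{thm:kid}. I would not try to control $\ell(D)$ directly. Instead I would use the specific diagram produced in the proof of Lemma~\ref{lem:overpass}: the closure diagram $D$ obtained by joining the aglets of $\K$ with an arc that passes over every strand it meets.

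The steps, in order:

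\begin{enumerate}
\item Fix the knotoid diagram $\K$ and form $D$ as above. Let $n$ be the number of crossings of the new closing arc with $\K$. Then $c(D) = c(\K) + n$. Here $c(\K)$ denotes the number of crossings in the given diagram $\K$, which is the quantity used in Lemma~\ref{lem:overpass}.
\item The closing arc is a single overpass containing $n$ consecutive over-crossings, so $\ell(D) \geq n$. This gives $c(D) - \ell(D) \leq c(D) - n = c(\K)$, which is exactly Lemma~\ref{lem:overpass}.
\item Since $D$ is a diagram of the knot $\cl(\K)$, Theorem~\ref{thm:kid} applies to this particular $D$ and yields
\[ \deg_z\bigl(F_{\cl(\K)}(a,z)\bigr) \leq c(D) - \ell(D) \leq c(\K). \]
\end{enumerate}

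The only point needing care is that Kidwell's bound holds for \emph{every} diagram of the knot. That lets us evaluate it on the constructed $D$ rather than on a minimal diagram.

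A smaller point is the edge case where the closing arc is a strand whose overpass merges with adjacent over-crossings of $\K$. In that case $\ell(D)$ is even larger, and the inequality only improves. So I do not expect a genuine obstacle: the corollary is a direct composition of the two cited results.
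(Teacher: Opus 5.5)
Your proposal is correct and is essentially the paper's own argument. The paper obtains the corollary directly by combining Lemma~\ref{lem:overpass} (for the closure diagram $D$ built with an overpass arc) with Kidwell's bound, Theorem~\ref{thm:kid}, applied to that same $D$.
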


Another useful tool, also proved by Kidwell as an appendix in~\cite{stoimenow}, is

\begin{theorem}\cite[Theorem 2.1]{stoimenow}\label{thm:32}
Suppose $\K$ is a knotoid diagram.  Then there is a diagram $D$ for $\cl(\K)$ such that
\[ c(D) \leq \frac{3}{2} \cdot c(\K).\]
\end{theorem}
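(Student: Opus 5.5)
The plan is to build, from the knotoid diagram $\K$ with $n = c(\K)$ crossings, an explicit diagram $D$ of $\cl(\K)$ whose overpass contributes at most $n/2$ crossings. Since the aglet arcs are the only parts we add, $c(D) = n + m$, where $m$ is the number of crossings on the connecting arc. So it suffices to join the aglets by an arc meeting $\K$ in at most $\lfloor n/2 \rfloor$ points, drawn entirely over $\K$.

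The first step is to choose the connecting arc. We have two natural ways to close up $\K$:
\begin{enumerate}
\item leave the first aglet and run alongside $\K$ near the beginning of the arc;
\item leave the second aglet and run alongside $\K$ near its end.
\end{enumerate}
More concretely, let $\gamma$ be an arc in $S^2$ from aglet $a_1$ to aglet $a_2$, transverse to $\K$ and missing its crossings. Because the new arc always passes over, any such $\gamma$ gives a diagram of $\cl(\K)$. The idea is to push $\gamma$ close to $\K$ itself: take a parallel copy of $\K$, which runs from $a_1$ to $a_2$ along one side of the diagram. This copy crosses $\K$ once near each crossing of $\K$ that it passes on the "wrong side", and we may switch the side of the push-off at chosen points.

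The counting step is where I expect the main obstacle. The cleaner route is a dual-graph argument, and I would try it first. The complement $S^2 \setminus \K$ consists of faces; $\K$ is a connected planar graph with $n$ four-valent vertices, two univalent vertices, and $2n+1$ edges, so Euler's formula gives $n+1$ faces. An arc from $a_1$ to $a_2$ crossing $\K$ $m$ times is a path of length $m$ in the dual graph from the face containing $a_1$ to the face containing $a_2$. The claim to aim for is that the dual distance between any two faces adjacent to $\K$'s endpoints is at most $n/2$. Distance alone does not suffice, since the dual graph could be path-like, so I would instead use the two parallel push-offs of $\K$, one on each side. Each push-off goes from near $a_1$ to near $a_2$. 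At each of the $n$ crossings, the two transverse strands contribute crossings to the push-offs; tracking which side each passage meets, each crossing contributes exactly two intersections total to the pair of push-offs. Hence the two push-offs together meet $\K$ in $2n$ points, roughly; they can be rerouted near the crossings to save intersections, and I expect the honest count to be at most $n$ combined. So one of them meets $\K$ at most $n/2$ times.

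Finally, we take that push-off as the overpass, giving $c(D) \le n + n/2 = \tfrac32 c(\K)$. The remaining care is bookkeeping at the aglets and crossing points, making sure the push-off can be made to avoid doubling its count. If the push-off count proves too crude, the fallback is to argue directly in the dual graph. There I would choose, for each of the two sides of $\K$, a sequence of faces, and show that the two resulting paths have lengths summing to at most $n$.
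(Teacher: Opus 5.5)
\emph{(The paper gives no proof of this statement; it only cites Kidwell's appendix in Stoimenow's paper. So your proposal is judged on its own.)}

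\textbf{The reduction is right, but the crux is missing.} Because the closing arc passes over everything, any embedded arc $\gamma$ from $a_1$ to $a_2$ transverse to $\K$ gives a diagram of $\cl(\K)$ with $n+|\gamma\cap\K|$ crossings. So the theorem comes down to showing that the dual distance $k$ between the faces $f_1\ni a_1$ and $f_2\ni a_2$ is at most $n/2$. Your proposal never proves this, and the mechanism you sketch does not work as stated:
\begin{itemize}
\item \emph{The push-off count is wrong.} A push-off of $\K$ meets the other strand at every \emph{passage} through a crossing, and $\K$ passes each crossing twice. So one push-off already meets $\K$ in $2n$ points, and the two together in $4n$, not $2n$.
\item \emph{Local rerouting cannot close the gap.} To reach $n/2$ you would need to save $3n/2$ intersections on one push-off. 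The only available savings are global shortcuts: a push-off visits a common quadrant at both passages of a crossing, and you can excise the portion in between. Their combined effect depends on how the chords of the Gauss diagram interleave, and you give no control of that.
\item \emph{The fallback is circular.} Finding two face sequences of total length at most $n$ just restates the goal.
\item \emph{The distance approach is not the problem.} Your worry that ``distance alone does not suffice'' is misplaced: $k\le n/2$ is true. What rules out a long path-like dual graph between $f_1$ and $f_2$ is that these faces contain the two ends of a single immersed arc, and a proof must use that.
\end{itemize}

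\textbf{One way to supply the missing step.} For $0\le j<k$, let $E_j$ be the set of edges of $\K$ separating a face at distance $j$ from $f_1$ from a face at distance $j+1$. Each $E_j$ is nonempty, and the aglet edges lie in none. Classify crossings as follows:
\begin{itemize}
\item \emph{straight} for $E_j$: two opposite half-edges lie in $E_j$ and the other two lie in no $E_i$;
\item \emph{turning} for $E_j$: two adjacent half-edges lie in $E_j$ and the other two lie in no $E_i$;
\item a \emph{link}: two half-edges lie in $E_j$ and two in $E_{j+1}$.
\end{itemize}
These types are mutually exclusive, and the remaining crossings (all four half-edges in one $E_j$, or none in any) play no role. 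Write $s$, $t$, $d$ for the numbers of straight crossings, turning crossings and links.

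\emph{Parity gives $s\ge k$.} Push the boundary of the union of faces at distance $\le j$ slightly inward. The resulting closed curves meet $\K$ an odd number of times, since $a_1$ lies inside and $a_2$ outside. Straight crossings of $E_j$ contribute one intersection each, and every other crossing contributes an even number. Hence each $E_j$ has a straight crossing of its own.

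\emph{Connectivity gives $t\ge k-d$.} Traversing $\K$, you can switch between edges in $\bigcup E_j$ and edges outside it only at turning crossings. While on edges of $\bigcup E_j$, you can move between $E_j$ and $E_{j\pm1}$ only at links. Every edge is traversed, so each component of the graph on $\{0,\dots,k-1\}$ whose edges are the links contains a level with a turning crossing.

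Therefore $n\ge s+t+d\ge 2k$. Realizing a shortest dual path as an embedded arc then gives $c(D)\le n+\lfloor n/2\rfloor$.
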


A knot diagram $D$ is called \emph{prime} if there does not exist a simple closed curve $c$ meeting $D$ in two points such that there are crossings of $D$ on both sides of $c$.  The \emph{breadth} $\text{breadth}(V_K(t))$ of the Jones polynomial $V_K(t)$ is the difference between the largest and smallest degrees appearing in $V_K(t)$.  Murasugi and Thistlethwaite proved that if $D$ is a prime, non-alternating diagram for a knot $K$, then $c(D) > \text{breadth}(V_K(t))$~\cite{murasugi, thistlethwaite}.

We require one more technical, highly specialized lemma.

\begin{lemma}\label{lem:connect}
Suppose $\K$ is a knotoid diagram such that $\cl(\K)$ is the connected sum of the trefoil and figure eight knot.  Then either $\K$ is not prime, or $c(\K) > 5$.
\end{lemma}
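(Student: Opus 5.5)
Suppose $\K$ is a prime knotoid diagram with $\cl(\K) = 3_1 \# 4_1$ and $c(\K) \leq 5$; we aim for a contradiction. The first step is to pin down $c(\K)$. By Theorem~\ref{thm:32}, $\cl(\K)$ has a diagram $D$ with $c(D) \leq \frac{3}{2}c(\K)$. Since $c(3_1\#4_1) = 7$ (crossing number is additive for alternating knots), we get $c(\K) \geq 14/3$, so $c(\K) = 5$.

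Next I would use the degree bound. Corollary~\ref{cor:f} says $\deg_z F_{\cl(\K)}(a,z) \leq c(\K) = 5$. The Kauffman polynomial is multiplicative under connected sum, so
\[\deg_z F_{3_1\#4_1} = \deg_z F_{3_1} + \deg_z F_{4_1} = 2 + 3 = 5.\]
This is consistent, so it gives no contradiction by itself. However, it forces equality in Kidwell's Theorem~\ref{thm:kid}. Concretely, the diagram $D$ obtained by adding an overpass to $\K$ must satisfy $c(D) - \ell(D) = 5$, and the closing arc must be a maximal overpass.

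The structural step is then to use the three facts together: $\K$ is prime, $c(\K) = 5$, and the closing overpass meets $n$ crossings, so $c(D) = 5 + n$. I would rely on the known characterization of equality in Kidwell's bound, via Thistlethwaite's work relating the $z$-degree to alternating and adequate structure. Equality $\deg_z F = c(D) - \ell(D)$ should force $D$, after removing the overpass, to look like an alternating diagram of a knot whose reduced alternating form has $\deg_z + \ell$ crossings. Given $\ell(D) = n$, I would try to show that such a $D$ is a prime alternating diagram (up to flypes) of $\cl(\K)$. But $3_1 \# 4_1$ is composite and alternating, and by Menasco every alternating diagram of it is composite. I would then argue that a decomposing circle for $D$ can be isotoped off the overpass arc, or to meet it once. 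This yields a circle meeting $\K$ once with crossings on both sides, contradicting the primality of $\K$.

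The fallback step, which I expect to be the main obstacle, is to make this rigorous. Failing a clean citation for the equality case, I would enumerate instead. Knotoid diagrams with 5 crossings are finite in number, and prime ones correspond to prime singular arc diagrams, which can be generated by extending the case analysis of Proposition~\ref{prop:c4}, this time without the reducedness restrictions that come from R1 moves. For each diagram I would assign crossing data, form the over-closure, and compute its Kauffman or HOMFLY polynomial with SnapPy or Sage. Any closure equal to $3_1\#4_1$ would be checked directly. If the structural argument fails, this computer check is the realistic proof. Its one delicate part is making sure the list of prime 5-crossing singular arc diagrams is complete.
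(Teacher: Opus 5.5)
\textbf{There is a genuine gap: your structural step does not work as stated.} Your first steps are fine. Theorem~\ref{thm:32} together with $c(3_1\# 4_1)=7$ forces $c(\K)=5$, and the count $\deg_z = 2+3 = 5$ forces equality in Theorem~\ref{thm:kid}.

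The trouble starts after that. Kidwell's theorem says equality holds for prime reduced alternating diagrams. It does not characterize equality, and nothing you cite turns equality into alternating structure.

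More decisively, the conclusion you aim for cannot hold. Any overpass closure $D$ of $\K$ has $c(D)=5+n\geq c(J)=7$. So the closing arc carries $n\geq 2$ consecutive over-crossings, and $D$ is necessarily non-alternating. It is therefore never a prime alternating diagram, up to flypes or otherwise, and Menasco's theorem says nothing about it.

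What is missing is a valid argument that some closure diagram is composite. The paper gets this by keeping the diagram $D$ supplied by Theorem~\ref{thm:32}, rather than using it only to bound $c(\K)$:
\begin{itemize}
\item Since $c(D)\leq 7 = c(J)$, we get $c(D)=7$, with exactly two over-crossings on the closing arc $\A$.
\item Hence $D$ is non-alternating.
\item Murasugi--Thistlethwaite says $c(D) > \text{breadth}(V_J(t))$ for a prime non-alternating diagram. Since $\text{breadth}(V_J(t))=7$, $D$ must be composite.
\item So $D = D_1 \# D_2$ with $c(D_1)=3$ and $c(D_2)=4$.
\end{itemize}
Pinning $c(D)$ to exactly $7$ is essential. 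For an overpass closure with more crossings, the breadth bound gives nothing.

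\textbf{Your final step also misses a case.} Grant a decomposing circle $c$ for $D$.
\begin{itemize}
\item If $c$ meets $\A$ once, you do get a circle meeting $\K$ once, with crossings of $\K$ on both sides. This holds because $\A$ is embedded and carries only two crossings, so each side has at least one self-crossing of $\K$.
\item If $c$ is disjoint from $\A$, then $c$ meets $\K$ in two points. That says nothing about the primality of $\K$, and your sketch gives no reason it can be moved to meet $\A$ once.
\end{itemize}
The paper handles the disjoint case separately. Both aglets then lie on one side of $c$. Replacing the other side by an arc of $c$ yields either a knotoid with $1$ crossing whose closure is the trefoil, or one with $2$ crossings whose closure is the figure eight. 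Both contradict Corollary~\ref{cor:f}, since these knots have $z$-degrees $2$ and $3$.

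\textbf{On your fallback.} Enumerating prime $5$-crossing singular arc diagrams by computer is a legitimate finite strategy in principle. As proposed, though, it is only a plan. The completeness of the list, which you flag yourself, is exactly the part that would need proof.
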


\begin{proof}
Suppose $\K$ is a knotoid diagram such that $\cl(\K) = K_1 \# K_2$, where $K_1$ is the trefoil and $K_2$ is the figure eight knot, and let $J = \cl(\K)$.  Using the Kauffman polynomial data from the KnotInfo database~\cite{knotinfo} along with the fact that the Kaufmann polynomial is multiplicative under connected sum, we have that $\deg_z(F_{J}(a,z)) = 5$, so that $c(\K) \geq 5$ by Corollary~\ref{cor:f}.  Suppose $c(\K) = 5$, and note that $\K$ cannot admit a simplifying R1 move.  By Theorem~\ref{thm:32}, there is a diagram $D$ for $J$ such that $c(D) \leq 7$, and since $c(J) = 7$, we must have $c(D) = 7$.  Let $\A$ denote the arc used to obtain $D$ from $\K$, so that $\A$ contains two over-crossings, implying that $D$ is non-alternating.  By Murasugi and Thistlethwaite's Theorem along with the fact that $\text{breadth}(V_J(t)) = 7$, it follows that $D$ is not prime, so there exists a curve $c$ meeting $D$ in two points, with crossings on either side of $c$.

It follows that we can express $D$ as $D_1 \# D_2$, and since $D$ is a minimal-crossing diagram, we may assume without loss of generality that $D_1$ is a 3-crossing diagram for $K_1$ and $D_2$ is a 4-crossing diagram for $K_2$.  If the arc $\A$ meets $c$ in two points, then the knotoid diagram $\K$ is not connected, a contradiction.  If $\A$ meets $c$ in one point, then the knotoid diagram $\K$ also meets $c$ in one point.  As $c(D_1) =3$, $c(D_2) = 4$, and $\A$ contains two crossings, it follows that $\K$ contains crossings on either side of $c$, so that $\K$ is not prime (since $\K$ cannot admit a simplifying R1 move).  Finally, if $\A$ is disjoint from $c$, then $\A$ is contained entirely within $D_1$ or $D_2$.

If $\A$ is contained entirely within $D_1$, then so are both aglets of $\K$.  By cutting $\K$ open along $c$ and gluing in an arc of $c$, we can obtain a knotoid $\K'$ such that $c(\K') = 1$ and $\cl(\K') = K_1$, contradicting Corollary~\ref{cor:f}.  See Figure~\ref{fig:cut}.  On the other hand, if $\A$ is contained entirely within $D_2$, cutting $\K$ open along $c$ and gluing in an arc of $c$ yields a knotoid $\K''$ such that $c(\K'') = 2$ and $\cl(\K'') = K_2$, another contradiction to Corollary~\ref{cor:f}.  We conclude that either $c(\K) > 5$ or $\K$ is not prime.
\end{proof}

\begin{figure}[h!]
    \centering
    \includegraphics[width=0.7\textwidth]{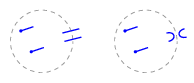}
                                        \put (-275,110) {$c$}
                                        \put (-110,110) {$c$}
                                        \put (-76,34) {$\kappa'$}
     \caption{If both aglets of $\K$ are inside of $D_1$, we surger $\K$ along $c$ to obtain a knotoid $\K'$ within $D_1$.}
    \label{fig:cut}
\end{figure}

We have assembled all of the necessary ingredients to prove our next bound on symmetric ribbon number, Theorem~\ref{thm:det}, which asserts that if $r_s(K) = 5$, then $\det(K) \leq 169$.

\begin{proof}[Proof of Theorem~\ref{thm:det}]
Suppose $r_s(K) = 5$.  Then $K$ bounds a symmetric ribbon disk with associated labeled knotoid diagram $\K$ such that $c(\K) = 5$.  Let $J  = \cl(\K)$.  By Corollary~\ref{cor:f} and Theorem~\ref{thm:32}, we have
\[ \deg_z(F_{J}(a,z)) \leq 5 \text{ and } c(J) \leq 7.\]
If $J$ is prime, then using the Kauffman polynomial and determinant data from the KnotInfo database~\cite{knotinfo}, we see that $J$ must satisfy $c(J) \leq 6$, since all 7-crossing knots have Kauffman polynomials with $z$-degree six.  The maximum possible determinant of such $J$ is 13, so we have $\det(J) \leq 13$, and by Lemma~\ref{lem:det}, it follows that $\det(K) \leq 169$.

On the other hand, suppose $J$ is composite.  Since crossing number is additive under connected sum for alternating knots, it follows that $J = J_1 \# J_2$, where $c(J_1) \leq 3$ and $c(J_2) \leq 4$.  If both $J_1$ and $J_2$ are trefoils, then $\det(K) \leq 81$.   Otherwise, $J_1$ is the trefoil and $J_2$ is the figure eight knot.  By Lemma~\ref{lem:connect} and the assumption that $c(\K) = 5$, we have that $\K$ is not prime.  But this implies $K$ is not prime, a contradiction.
\end{proof}

\begin{remark}\label{rmk:det} 
The proof of Lemma~\ref{lem:connect} can be adapted to knots summands with greater crossing numbers, so that similar bounds can be obtained for larger symmetric ribbon numbers.  Using the KnotInfo data up to 12 crossings, we can generalize the above to obtain the following:
\begin{itemize}
\item If $K$ is a prime symmetric ribbon knot such that $r_s(K) = 6$, then $\det(K) \leq 21^2$.
\item If $K$ is a prime symmetric ribbon knot such that $r_s(K) = 7$, then $\det(K) \leq 75^2$.
\item If $K$ is a prime symmetric ribbon knot such that $r_s(K) = 8$, then $\det(K) \leq 121^2$.
\end{itemize}
We do not, however, use these results in our tabulation below.  Consider the sequence $\{3, 5, 7, 13, 21, 75, 121,\dots\}$.  Squares of its entries show up as upper bounds in Corollary~\ref{cor:det}, Theorem~\ref{thm:det}, and the inequalities above.  This sequence coincides with the maximal determinant of a knot with $n$ crossings, starting with $n=3$~\cite{stoimenow}.  In general, we conjecture that the maximum determinant of a prime symmetric ribbon knot $K$ with $r_s(K) = n$ is the square of the maximum determinant of a prime knot $J$ with $c(J) = n+1$.  
\end{remark}

\section{Jones polynomials and symmetric ribbon knots}\label{sec:jones}

In this section, we determine all possible Jones polynomials for symmetric ribbon knots $K$ with $r_s(K) = 2$ or $r_s(K) = 3$ and $\det(K) = 25$.  Although these families are not finite, as in the case of $\Rr^s_n$, they are manageable enough to extract additional lower bounds on symmetric ribbon numbers.  Recall the oriented skein relation for the Jones polynomial from Section~\ref{sec:alex}.  We will let $U_2$ denote the two-component unlink, noting that $V_{U_2}(t) = -t^{-1/2} - t^{1/2}$.  Computations are performed using SnapPy~\cite{snappy} inside of Sage~\cite{sage}.  Observe that SnapPy within Sage uses the variable $q$ for the Jones polynomial, and we convert to the variable $t$ via the usual substitution $t=q^2$ (conversely, $q = -t^{1/2}$).

\begin{proposition}\label{prop:jones2}
Suppose $K$ is a knot with $r_s(K) = 2$.  There is a family $\{K_n\}$ of symmetric ribbon knots such that $K \in \{K_n\}$ and
\[ V_{K_n}(t) = (-t)^n(-t^{-3}+t^{-2}-t^{-1}+2-t + t^2 - t^{3})  + 1.\]
\end{proposition}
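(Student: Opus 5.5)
If $r_s(K)=2$, then $K$ bounds a symmetric ribbon disk whose labeled knotoid diagram $\K$ has exactly two crossings. By Lemma~\ref{lem:c2}, the underlying reduced singular arc diagram is $2_1$. (If $K$ were composite, or the diagram unreduced, R1 or aglet moves would lower the crossing count below two, contradicting $r_s(K)=2$; a nontrivial knot has $r_s \ge r \ge 2$.) The two aglet arcs may be assumed to meet the aglet crossings as under-crossings, since otherwise an aglet move lowers the crossing number. So the crossing information is forced up to mirror image. By the convention of Figure~\ref{fig:aglet}, only the single interior strand of $2_1$ carries a possibly nonzero label, say $n \in \mathbb{Z}$. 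This gives a one-parameter family $\K_n$ and knots $K_n = \pd \D(\K_n)$, and $K$ is some $K_n$, or its mirror after a reindexing that I check is absorbed into the formula.

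I compute $V_{K_n}(t)$ by induction on $n$ using the oriented skein relation, applying it at the crossing of the twist region of Figure~\ref{fig:change}. Changing that crossing turns $\K_n$ into $\K_{n\pm 2}$, and resolving it produces $L_0$. Exactly as in Lemma~\ref{lem:change}, $L_0$ is a two-component ribbon link, obtained by cutting the disk along the band. Changing labels on the strips of this split ribbon surface does not change the link type. I expect $L_0$ to be the two-component unlink $U_2$ for every $n$: cutting the band of $2_1$ at the strand carrying the twists separates the disk into two pieces, and I claim each piece has its singularities removable by isotopy. Then $V_{L_0}=-t^{-1/2}-t^{1/2}$, and the relation becomes
\[ t^{-1}V_{K_{n+2}} - tV_{K_n} = (t^{-1/2}-t^{1/2})(t^{-1/2}+t^{1/2}) = t^{-1}-t. \]
This can be rewritten as $V_{K_{n+2}}-1 = t^2(V_{K_n}-1)$.

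Consequently $V_{K_n}-1$ is multiplied by $t^2=(-t)^2$ at each step, and it suffices to compute two base cases, $n=0$ and $n=1$ (or $n=-1$), with SnapPy. For $n=0$, the disk is a symmetric union of $3_1\#-3_1$ type with determinant $9$. I check that its Jones polynomial is $f(t)+1$ with $f(t)=-t^{-3}+t^{-2}-t^{-1}+2-t+t^2-t^3$. For odd $n$, I verify that $V_{K_1}-1=-t f(t)$. Here I expect the single extra half-twist to correspond to a crossing change combined with a framing shift, which contributes a factor of $-t$. Then $V_{K_1}=(-t)^1 f+1$, and induction in both directions (the recursion is invertible) gives the formula for all $n$. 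After normalizing the shift, the mirror-image family gives the same set, since $f(t^{-1})=f(t)$.

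The main obstacle is the odd parity step. The skein relation above only connects labels of the same parity, so relating $K_1$ to $K_0$ requires either a direct computation showing $V_{K_1}-1=-tf(t)$, or a second skein argument at a crossing that changes the label by one. I would handle it by directly computing one odd representative. A second point needing care is justifying that $L_0$ is the unlink and not merely a ribbon link. If that fails, I would instead compute $V_{L_0}$ as a function of $n$ and show it is independent of $n$.
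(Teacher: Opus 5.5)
Your proposal is correct and follows the paper's proof: Lemma~\ref{lem:c2} forces the one-parameter family $\K_n$ on $2_1$, the skein relation in the twist region with $L_0=U_2$ gives $V_{K_{n+2}}-1=t^2(V_{K_n}-1)$, and the two parity classes are fixed by directly computing $K_0$ and $K_1$. Your additions fill in points the paper leaves implicit: the mirror check via $f(t^{-1})=f(t)$, and the reason $L_0=U_2$ (the new aglet created by the cut lies on the middle strand, which passes over both crossings, so aglet moves remove them). One correction: your remark about compositeness should concern the knotoid $\K$, not the knot $K$, since $K_0=3_1\#\overline{3_1}$ is itself composite.
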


\begin{proof}
By Lemma~\ref{lem:c2}, if $K$ satisfies $r_s(K) = 2$, then $K$ bounds a symmetric ribbon disk corresponding to the labeled knotoid diagram $\K_n$ in Figure~\ref{fig:rs2}.  Let $K_n$ denote the knot arising from the label $n$.

\begin{figure}[h!]
    \centering
    \includegraphics[width=0.3\textwidth]{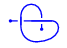}
                                        \put (-15,20) {$n$}
     \caption{The only labeled knotoid diagram $\K_n$ for knots $K$ with $r_s(K) = 2$.}
    \label{fig:rs2}
\end{figure}

There are two cases to consider, whether $n$ positive or negative.  The proof proceeds by induction, and we verify the base cases for $K_0$ and $K_1$ by direction computation.  First, let $n > 1$ and suppose the proposition holds for $K_{n-2}$.  Choosing $L_+$, $L_-$, and $L_0$ as in Figure~\ref{fig:change}, we have that $L_+ = K_{n}$, $L_- = K_{n-2}$, and $L_0 = U_2$.  Let $f(t) = -t^{-3}+t^{-2}-t^{-1}+2-t + t^2 - t^{-3}$.  Applying the skein relation, we have
\begin{eqnarray*}
V_{K_{n}}(t) &=& t^2 \, V_{K_{n-2}}(t) - (t^{1/2}-t^{3/2})V_{U_2}(t) \\
&=& t^2((-t)^{n-2} \cdot f(t)  + 1) - (t^{1/2}-t^{3/2})(-t^{-1/2}-t^{1/2}) \\
&=& (-t)^{n} \cdot f(t) + t^2 - t^2 +1 \\
&=& (-t)^{n} \cdot f(t) + 1.
\end{eqnarray*} 
On the other hand, if $n < 0$, we suppose that the proposition holds for $K_{n+2}$ and apply the skein relation with $K_{n} = L_-$, $K_{n+2} = L_+$, and $L_0 = U_2$.  We have
\begin{eqnarray*}
V_{K_{n}}(t) &=& t^{-2} \, V_{K_{n+2}}(t) + (t^{-3/2}-t^{-1/2})V_{U_2}(t) \\
&=& t^{-2}((-t)^{n+2} \cdot f(t)  + 1) + (t^{-3/2}-t^{-1/2})(-t^{-1/2}-t^{1/2}) \\
&=& (-t)^{n} \cdot f(t) + t^{-2} - t^{-2} +1 \\
&=& (-t)^{n} \cdot f(t) + 1.
\end{eqnarray*} 
\end{proof}

Turning to the case $r_s(K) = 3$, recall from Lemmas~\ref{lem:c3} and~\ref{lem:r3} that there are two possible labeled knotoid diagrams $\K_{m,n}$ and $\K'_{m,n}$ corresponding to $K$.  These diagrams are shown in Figure~\ref{fig:rs3}.

\begin{figure}[h!]
    \centering
    \includegraphics[width=0.6\textwidth]{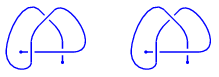}
                                        \put (-260,60) {$m$}
                                        \put (-182,40) {$n$}
                                        \put (-110,60) {$m$}
                                        \put (-5,40) {$n$}
     \caption{The only two labeled knotoid diagrams $\K_{m,n}$ (left) and $\K'_{m,n}$ (right) associated to knots $K$ with $r_s(K) = 3$.}
    \label{fig:rs3} 
\end{figure}

\begin{proposition}\label{prop:jones3}
Suppose $K$ is a knot with $r_s(K) = 3$ and $\det(K) = 25$.  There is a family $\{K_{m,n}\}$ of symmetric ribbon knots such that $K \in \{K_{m,n}\}$ and
\[ V_{K_{m,n}}(t) = (-t)^{m+n}(t^{-4}-2t^{-3}+3t^{-2}-4t^{-1}+4-4t+3t^2-2t^3+t^4)  + 1.\]
\end{proposition}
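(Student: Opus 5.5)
By Lemma~\ref{lem:c3} and the proof of Lemma~\ref{lem:r3}, a knot $K$ with $r_s(K)=3$ bounds a symmetric ribbon disk whose labeled knotoid diagram is either $\K_{m,n}$ or $\K'_{m,n}$ from Figure~\ref{fig:rs3}. Here $m$ and $n$ are the labels of the two interior strands (aglet strands may be taken to have label zero). The possible crossing choices at the interior crossing are absorbed into these two pictures, using the leaf isotopy of Figure~\ref{fig:moves3} and the reflection symmetry of $3_2$.

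First I will use Lemma~\ref{lem:change} to reduce to parity classes. The Alexander polynomial, and hence the determinant, of the knot with diagram $\K_{m,n}$ or $\K'_{m,n}$ depends only on $(m \bmod 2, n \bmod 2)$. So I compute the determinants of the eight representatives with $m,n\in\{0,1\}$ using SnapPy, and read off which parity classes give $\det=25$. Lemma~\ref{lem:r3} lists two polynomials of determinant $25$, namely $1-3t+5t^2-7t^3+5t^4-3t^5+t^6$ and $1-6t+11t^2-6t^3+t^4$. I expect $\det=25$ to cut out a single family, namely one diagram with one parity condition on $(m,n)$ or a single parity class. After reindexing the labels if needed, I call this family $\{K_{m,n}\}$; the reindexing should make the exponent come out as $m+n$. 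If two classes survive, I will check directly that they give the same Jones formula, for instance because they are related by a leaf isotopy.

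Next I prove the Jones formula by double induction, exactly as in Proposition~\ref{prop:jones2}. Write $g(t)$ for the degree-$\pm4$ polynomial in the statement.
\begin{itemize}
\item Base cases: I verify the formula directly in SnapPy for the representatives of the surviving class, e.g.\ $(m,n)$ and its neighbors needed to start the induction in each direction.
\item Inductive step: fix $n$ and change $m$ by $2$. Apply the skein relation at a crossing inside the twist region of the $m$-labeled strip, oriented as in Figure~\ref{fig:change}. Then $L_+=K_{m,n}$, $L_-=K_{m-2,n}$, and $L_0$ is obtained by resolving a half-twist of the band.
\end{itemize}

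The key point is that $L_0$ is a two-component unlink $U_2$, just as in the two-crossing case. Resolving a twist crossing in a band cuts the band, and the disk splits into two disks that are disjoint after an isotopy, possibly after undoing singularities along the cut band. The computation in the proof of Proposition~\ref{prop:jones2} then gives
\[ V_{K_{m,n}}(t) = t^2 V_{K_{m-2,n}}(t) - (t^{1/2}-t^{3/2})V_{U_2}(t) = (-t)^{m+n}g(t) + t^2 - t^2 + 1 = (-t)^{m+n}g(t)+1. \]
Decreasing $m$ works symmetrically using $t^{-2}$, and the same argument applies in $n$.

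The main obstacle is justifying that $L_0$ is the unlink in the three-singularity diagram for both twisted strands. Cutting one strip of a ribbon disk gives a two-component ribbon link with two disjoint disks only after checking that the remaining singularities can be removed. I would first try to argue this picture by picture: after cutting, each strand passing through a singularity belongs to one component whose disk can be slid off the other (as in an aglet move). If this fails for one of the strands, I would instead compute $V_{L_0}$ directly for that strand, since $L_0$ is a fixed link independent of the other label's twisting up to parity, and fold the resulting constant into the base-case verification.
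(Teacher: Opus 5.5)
Your proposal is correct and is essentially the paper's proof: apply the skein relation in either twist region with $L_0 = U_2$, and induct from the four base cases $(m,n)\in\{0,1\}^2$. Moreover, $L_0 = U_2$ does hold, since after cutting either labeled band a sequence of aglet moves removes all three singularities; your fallback is therefore never needed, and it could not have worked anyway, because the ``$+1$'' comes precisely from $(t^{1/2}-t^{3/2})V_{U_2}(t) = t^2-1$. The only shortcut you missed is Lemma~\ref{lem:det}: $\cl(\K_{m,n})$ is the unknot and $\cl(\K'_{m,n})$ is the figure eight knot, so $\det(K)=25$ selects the entire family $\K'_{m,n}$ with no parity condition and no reindexing, which makes your SnapPy parity check unnecessary; this family accounts for all three determinant-25 polynomials in Lemma~\ref{lem:r3}, not two as you wrote.
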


\begin{proof}
Suppose $r_s(K) = 3$ and $\det(K) = 25$.  Note that $\cl(\K_{m,n})$ is the unknot, and so by Lemma~\ref{lem:det} and the assumption that $\det(K) = 25$, we have that $K$ must correspond to the other family of diagrams, $\K'_{m,n}$ (the closure of which is the figure eight knot).  It follows that $K = K_{m,n}$, the knot arising from $\K'_{m,n}$, for some integers $m$ and $n$.  First, we note that performing an oriented smoothing in the twist region labeled $m$ yields $U_2$, as does performing an oriented smoothing in the twist region labeled $n$.  Thus, for any $m$, we can choose $L_+ = K_{m+2,n}$, $L_- = K_{m,n}$, and $L_0 = U_2$ to satisfy the oriented skein relation.  However, we can alternatively choose $L_+ = K_{m,n+2}$, $L_- = K_{m,n}$, and $L_0 = U_2$.  It follows that $V_{L_{m+2,n}}(t) = V_{L_{m,n+2}}(t)$.  A similar argument shows that $V_{L_{m-2,n}}(t) = V_{L_{m,n-2}}(t)$.  As a consequence, we need only show that the desired statement holds for knots of the form $K_{0,n}$ and $K_{1,n}$.

As above, we proceed by induction on $m$, verifying the formula for $K_{0,0}$, $K_{0,1}$, $K_{1,0}$, and $K_{1,1}$ by direct computation.  Suppose $n>1$ and suppose the formula holds for $K_{i,n-2}$, with $i=0$ or $1$.  Let $L_+ = K_{i,n}$, $L_- = K_{i,n-2}$, and $L_0 = U_2$, and let $f(t) = t^{-4}-2t^{-3}+3t^{-2}-4t^{-1}+4-4t+3t^2-2t^3+t^4$.  Applying the skein relation yields
\begin{eqnarray*}
V_{K_{i,n}}(t) &=& t^2 \, V_{K_{i,n-2}}(t) - (t^{1/2}-t^{3/2})V_{U_2}(t) \\
&=& t^2((-t)^{i+n-2} \cdot f(t)  + 1) - (t^{1/2}-t^{3/2})(-t^{-1/2}-t^{1/2}) \\
&=& (-t)^{i+n} \cdot f(t) + t^2 - t^2 +1 \\
&=& (-t)^{i+n} \cdot f(t) + 1.
\end{eqnarray*} 
Similarly, if $n < 0$, let $L_+ = K_{i,n}$, $L_- = K_{i,n+2}$, and $L_0 = U_2$.  Then
\begin{eqnarray*}
V_{K_{i,n}}(t) &=& t^{-2} \, V_{K_{i,n+2}}(t) + (t^{-3/2}-t^{-1/2})V_{U_2}(t) \\
&=& t^{-2}((-t)^{i + n+2} \cdot f(t)  + 1) + (t^{-3/2}-t^{-1/2})(-t^{-1/2}-t^{1/2}) \\
&=& (-t)^{i+n} \cdot f(t) + t^{-2} - t^{-2} +1 \\
&=& (-t)^{i+n} \cdot f(t) + 1.
\end{eqnarray*} 

\end{proof}

As a corollary, we show

\begin{corollary}\label{cor:1139}
The knots $K \in \{11n_{39}, 12n_{256}, 12n_{257},12n_{394},12n_{870}\}$ satisfy $r_s(K) \geq 4$.
\end{corollary}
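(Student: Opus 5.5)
We argue by contradiction: assume some $K$ in the list has $r_s(K) \leq 3$ and rule out each possible value. Each of these knots is nontrivial and prime, so $r_s(K) \geq 2$. For each of the five knots we first read off from KnotInfo that $\det(K) = 25$ and that $\Delta_K(t)$ is one of the determinant-25 elements of $\Rr^s_3$ listed in Lemma~\ref{lem:r3}.

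The value $r_s(K) = 2$ is excluded at once. By Corollary~\ref{cor:det} (equivalently Lemma~\ref{lem:r2}), $r_s(K) = 2$ would force $\det(K) = 9$, not $25$; alternatively, $\Delta_K(t) \notin \Rr^s_2$. Hence $r_s(K) = 3$, and with $\det(K) = 25$ Proposition~\ref{prop:jones3} applies. It gives an integer $N = m+n$ with
\[ V_K(t) = (-t)^{N} f(t) + 1, \]
where $f(t) = t^{-4}-2t^{-3}+3t^{-2}-4t^{-1}+4-4t+3t^2-2t^3+t^4$.

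The remaining step is to show that no such $N$ exists for each of the five knots. This only requires examining $V_K(t)-1$:
\begin{enumerate}
\item Take the Jones polynomial from KnotInfo, or compute it in SnapPy, converting $q$ to $t$ as described earlier.
\item Check whether $V_K(t) - 1$ equals $\pm t^{N} f(t)$ for some $N$.
\item Since $f$ has breadth $8$ and its coefficients are palindromic with absolute values $1,2,3,4,4,4,3,2,1$, test the conditions in order: breadth of $V_K(t)-1$ equal to $8$; absolute values of its coefficients matching this sequence; alternating signs; and the sign $(-1)^N$ consistent with the lowest degree $N-4$.
\end{enumerate}
Any failure among these rules out every $N$ simultaneously, because the comparison is of a fixed Laurent polynomial against a one-parameter family that is rigid in shape.

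We expect the main obstacle to be purely computational: getting the normalization of $V_K$ right, namely the $q = -t^{1/2}$ substitution and the chirality. A mirror image replaces $t$ by $t^{-1}$, and $f$ is symmetric under this substitution, so the family is closed under mirroring and chirality ambiguity is harmless. For the $q$ conversion, we will sanity-check against a knot with a known symmetric union with three singularities and determinant $25$, such as $11n_{37}$, which should satisfy the formula. With $r_s(K) \notin \{2,3\}$ established, we conclude $r_s(K) \geq 4$.
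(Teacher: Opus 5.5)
Your proposal is correct and follows the paper's proof: $\det(K)=25$ rules out $r_s(K)=2$ via Corollary~\ref{cor:det}, and the case $r_s(K)=3$ is then excluded by checking the KnotInfo Jones polynomials against Proposition~\ref{prop:jones3}. The only difference is the final check. The paper notes that the formula forces every coefficient of $V_K(t)$ to have absolute value at most $5$, with at most one equal to $5$. Your breadth test on $V_K(t)-1$ works equally well, since these five polynomials have breadths $10, 11, 10, 10, 10$ rather than $8$.
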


\begin{proof}
From KnotInfo, $\det(K) = 25$, implying that $r_s(K) \geq 3$ by Corollary~\ref{cor:det}.  If $r_s(K) = 3$, then by Proposition~\ref{prop:jones3}, all coefficients $c_i$ of $V_K(t)$ satisfy $|c_i| \leq 5$, and at most one satisfies $|c_i| = 5$.  However, from KnotInfo,
\begin{eqnarray*}
V_{11n_{39}}(t) &=& -t^{-4}+ t^{-3}-t^{-1}+ 4-4t+ 5t^2-5t^3+ 4t^4-3t^5+ t^6 \\
V_{12n_{256}}(t) &=& -t^{-4}+ 2t^{-3}-2t^{-2}+ t^{-1} + 2-3t+ 5t^2-6t^3+ 6t^4-5t^5+ 3t^6-t^7 \\
V_{12n_{257}}(t) &=& t^{-6}-3t^{-5}+ 4t^{-4}-5t^{-3}+ 5t^{-2}-4t^{-1}+ 4-t+ t^3-t^4 \\
V_{12n_{394}}(t) &=& t^{-5}-t^{-4}+ t^{-2}-3t^{-1}+ 5-5t+ 5t^2-4t^3+ 3t^4-t^5\\
V_{12n_{870}}(t) &=& -t^{-3}-t^{-2}+ 2-3t+ 4t^2-5t^3+ 5t^4-4t^5+ 3t^6-t^7.
\end{eqnarray*}
Hence, $r_s(K) \geq 4$.
\end{proof}

\begin{remark}\label{rmk:search}
Proposition~\ref{prop:jones3} not only provides a new obstruction to $r_s(K) = 3$; it also helps us search for previously unknown symmetric ribbon disks.  Consider the symmetric ribbon knots $11n_{37}$ and $12n_{414}$, both of which have determinant 25.  Applying Prop~\ref{prop:upper} to the symmetric union presentations for these knots in~\cite{lamm2} yields $r_s(11n_{37}) \leq 4$ and $r_s(12n_{414}) \leq 4$.  Using KnotInfo, we have
\begin{eqnarray*}
V_{11n_{37}}(t) &=& t^{-6}-2t^{-5}+ 3t^{-4}-4t^{-3}+ 4t^{-2}-4t^{-1}+ 4-2t+ t^2 \\
V_{12n_{414}}(t) &=& -t^{-7}+ 2t^{-6}-3t^{-5}+ 4t^{-4}-4t^{-3}+ 4t^{-2}-3t^{-1}+ 3-t.
\end{eqnarray*}
Thus, by Proposition~\ref{prop:jones3}, if $r_s(11n_{37})=3$, then $11n_{37} = K_{m,n}$, where $m+n = -2$.  Similarly, if $r_s(12n_{414}) = 3$, then $12n_{414} = K_{m,n}$ with $m+n = -3$.  After searching through low-complexity possibilities, we verified with SnapPy that indeed $11n_{137} = K_{-3,1}$ and $12n_{414} = K_{-4,1}$, shown in Figure~\ref{fig:symm3}.  Note further that Lamm's symmetric union presentations for $11n_{37}$ and $12n_{414}$ are marked ``m" for minimal in Table 1 of~\cite{lamm2}.  This notion of minimality does not refer to symmetric ribbon number but rather to the crossing number of the symmetric union presentation.  Lamm's symmetric union presentation for $11n_{37}$ has symmetric ribbon number four with crossing number 11, while ours has symmetric ribbon number three and crossing number 12.  Similarly, Lamm's symmetric union presentation for $12n_{414}$ has symmetric ribbon number four with crossing number 12, while ours has symmetric ribbon number three and crossing number 13.  Finally, the partial knot for Lamm's presentations is $5_1$, while the partial knot for our presentations is $4_1$.
\end{remark}

\section{Tabulation of symmetric ribbon numbers}\label{sec:tab}

In this section, we put our tools to work, tabulating the symmetric ribbon numbers for symmetric ribbon knots up to 12 crossings.  As noted in the statement of Theorem~\ref{thm:main}, symmetric ribbon numbers for
\begin{itemize}
\item prime symmetric ribbon knots with 11 or fewer crossings appear in Table~\ref{table:10},
\item prime nonalternating symmetric ribbon knots with 12 crossings appear in Table~\ref{table:12n}, and
\item prime alternating symmetric ribbon knots with 12 crossings appear in Table~\ref{table:12a}.
\end{itemize}
In addition, lower bounds for the symmetric ribbon numbers of the 15 knots with 12 or fewer crossings that are not known to be symmetric are included in Table~\ref{table:non}.

For reference, each table row includes the knot determinant, Alexander polynomial, the lower bound for the ribbon number taken from~\cite{FMZ} (for knots with 11 or fewer crossings) or~\cite{polymath} (for knots with 12 crossings), the symmetric ribbon number (or a range of possible symmetric ribbon numbers), and the justification for the lower bound on the symmetric ribbon number.  Upper bounds for symmetric ribbon numbers are computed using Proposition~\ref{prop:upper} along with the symmetric union presentations from~\cite{lamm1} (for knots with 10 or fewer crossings) or~\cite{lamm2} (for knots with 11 or 12 crossings).  The only exceptions are the upper bounds for $10_{87}$, which comes from the diagram in~\cite{lamm3}, and the knots $11n_{37}$ and $12n_{414}$, noted in Remark~\ref{rmk:search} and Figure~\ref{fig:symm3}.

\begin{table}[ht]
\centering
	\resizebox*{!}{.9\dimexpr\textheight-1\baselineskip\relax}{%
\begin{tabular}{|l|l|l|l|l|l|}
\hline
$K$ & $\det(K)$ & $\Delta_K(t)$  & $r(K) \geq $ & $r_s(K)$ & lower \\ \hline
$6_1$ & 9 & $2-5t+2t^2$ & 2 & 2 & Lem.~\ref{lem:obv}  \\
$8_8$ & 25 & $2-6t+9t^2-6t^3+2$ & 3 & 3 & Lem.~\ref{lem:obv}  \\
$8_9$ & 25 & $1-3t+5t^2-7t^3+5t^4-3t^3+t^6$ & 3 & 3 & Lem.~\ref{lem:obv}  \\
$8_{20}$ & 9 & $1-2t+3t^2-2t^3+t^4$ & 2 & 2 & Lem.~\ref{lem:obv}  \\
$9_{27}$ & 49 & $1-5t+11t^2-15t^3+11t^4-5t^5+t^6$ & 3 & 4 & Lem.~\ref{lem:r3}  \\
$9_{41}$ & 49 & $3-12t+19t^2-12t^3+3t^4$ & 3 & 4 & Lem.~\ref{lem:r3}  \\
$9_{46}$ & 9 & $2-5t+2t^2$ & 2 & 2 & Lem.~\ref{lem:obv}  \\
$10_{3}$ & 25 & $6 -13t + 6t^2$ & 4 & 4 & Lem.~\ref{lem:obv}  \\
$10_{22}$ & 49 & $2-6t+10t^2-13t^3+10t^4-6t^5+2t^6$ & 4 & 4 & Lem.~\ref{lem:obv}  \\
$10_{35}$ & 49 & $2-12t+21t^2-12t^3+2t^4$ & 4 & 4 & Lem.~\ref{lem:obv}  \\
$10_{42}$ & 81 & $1-7t+19t^2-27t^3+19t^4-7t^5+t^6$ & 4 & 5 & Thm.~\ref{thm:r4} \\
$10_{48}$ & 49 & $1-3t+6t^2-9t^3+11t^4-9t^5+6t^6-3t^7+t^8$ & 4 & 4 & Lem.~\ref{lem:obv}  \\
$10_{75}$ & 81 & $1-7t+19t^2-27t^3+19t^4-7t^5+t^6$ & 4 & 5 & Thm.~\ref{thm:r4} \\
$10_{87}$ & 81 & $2-9t+18t^2-23t^3+18t^4-9t^5+2t^6$ & 4 & 5 & Thm.~\ref{thm:r4} \\
$10_{99}$ & 81 & $1-4t+10t^2-16t^3+19t^4-16t^5+10t^6-4t^7+t^8$ & 4 & 5 & Thm.~\ref{thm:r4} \\
$10_{123}$ & 121 & $1 - 6t + 15t^2 - 24t^3+29t^4-24t^5+15t^6-6t^7+t^8$ & 4 & 5  & Lem.~\ref{lem:obv}   \\
$10_{129}$ & 25 & $2-6t+9t^2-6t^3+2$ & 3 & 3 & Lem.~\ref{lem:obv}  \\
$10_{137}$ & 25 & $1 - 6t + 11t^2-6t^3+t^4$ & 3 & 3 & Lem.~\ref{lem:obv}  \\
$10_{140}$ & 9 & $1-2t+3t^2-2t^3+t^4$ & 2 & 2 & Lem.~\ref{lem:obv}  \\
$10_{153}$ & 1 & $1-t+t^2-3t^3+t^4-t^5+t^6$ & 3 & 3 & Lem.~\ref{lem:obv}  \\
$10_{155}$ & 25 & $1-3t+5t^2-7t^3+5t^4-3t^3+t^6$ & 3 & 3 & Lem.~\ref{lem:obv}  \\
$11a_{28}$ & 121 & $1 - 6t + 15t^2 - 24t^3+29t^4-24t^5+15t^6-6t^7+t^8$ & 4 & 5 & Thm.~\ref{thm:r4}  \\
$11a_{35}$ & 121 & $1-5t+14^2-25t^3+31t^4-25t^5+14t^6-5t^7+t^8$ & 4 & 5 & Thm.~\ref{thm:r4}  \\
$11a_{36}$ & 121 & $2-12t+28t^2-37t^3+28t^4-12t^5+t^6$ & 4 & 5 & Thm.~\ref{thm:r4}  \\
$11a_{58}$ & 81 & $2-9t+18t^2-23t^3+18t^4-9t^5+2t^6$ & 4 & 5, 6, 7, 8 & Thm.~\ref{thm:r4}  \\
$11a_{87}$ & 121. & $2-11t+28t^2-39t^3+28t^4-11t^5+2t^6$ & 4 & 5 & Thm.~\ref{thm:r4}  \\
$11a_{96}$ & 121 & $1-9t+29t^2-43t^3+29t^4-9t^5+t^6$ & 4 & 5 & Thm.~\ref{thm:r4}  \\
$11a_{115}$ & 121 & $3-13t+27t^2-35t^3 + 27t^4-13t^5+3t^6$ & 4 & 5 & Thm.~\ref{thm:r4}  \\
$11a_{164}$ & 169 & $1-7t+20t^2-35t^3+43t^4-35t^5+20t^6-7t^7+t^8$ & 4 & 5, 6 & Thm.~\ref{thm:r4}  \\
$11a_{169}$ & 121 & $2-12t+28t^2-37t^3+28t^4-12t^5+t^6$ & 4 & 5 & Thm.~\ref{thm:r4}  \\
$11a_{316}$ & 121 & $1-5t+14^2-25t^3+31t^4-25t^5+14t^6-5t^7+t^8$ & 4 & 5 & Thm.~\ref{thm:r4}  \\
$11a_{326}$ & 169 & $1-6t+19t^2-36t^3+45t^2-36t^5+19t^6-6t^7+t^8$ & 4 & 5, 6 & Thm.~\ref{thm:r4}  \\
$11n_{4}$ & 49 & $1-5t+11t^2-15t^3+11t^4-5t^5+t^6$ & 3 & 4 & Lem.~\ref{lem:r3}  \\
$11n_{21}$ & 49 & $1-5t+11t^2-15t^3+11t^4-5t^5+t^6$ & 3 & 4 & Lem.~\ref{lem:r3}  \\
$11n_{37}$ & 25 & $1-3t+5t^2-7t^3+5t^4-3t^3+t^6$ & 3 & 3 & Lem.~\ref{lem:obv} \\
$11n_{39}$ & 25 & $2-6t+9t^2-6t^3+2$ & 3 & 4 & Cor.~\ref{cor:1139}  \\
$11n_{42}$ & 1 & 1 & 3 & 3 & Lem.~\ref{lem:obv}  \\
$11n_{49}$ & 1 & $1-3t^2+t^4$ & 3 & 3 & Lem.~\ref{lem:obv} \\
$11n_{50}$ & 25 & $2-6t+9t^2-6t^3+2$ & 3 & 3 & Lem.~\ref{lem:obv} \\
$11n_{83}$ & 49 & $3-12t+19t^2-12t^3+3t^4$ & 3 & 4 & Lem~\ref{lem:r3}  \\
$11n_{116}$ & 1 & $1-3t^2+t^4$ & 3 & 3 & Lem~\ref{lem:r3}  \\
$11n_{132}$ & 25 & $2-6t+9t^2-6t^3+2$ & 3 & 3 & Lem~\ref{lem:r3}  \\
$11n_{139}$ & 9 & $2-5t+2t^2$ & 2 & 2 & Lem~\ref{lem:r3} \\
$11n_{172}$ & 49 & $1-5t+11t^2-15t^3+11t^4-5t^5+t^6$ & 3 & 4 & Lem~\ref{lem:r3} \\ \hline
\end{tabular}}
\caption{Symmetric ribbon number data and justifications for prime symmetric ribbon knots up to 11 crossings}
\label{table:10}
\end{table}

\begin{table}[!ht]
    \centering
	\resizebox*{!}{.93\dimexpr\textheight-1\baselineskip\relax}{%
    \begin{tabular}{|l|l|l|l|l|l|l|}
    \hline
        $K$ & $\text{det}(K)$ & $\Delta_K(t)$ & $r(K) \geq$ & $r_s(K)$ & lower \\ \hline
 $12n_4$ & 81 & $1-7t+19t^2-27t^3+19t^4-7t^5+t^6$ & 4 & 5 & Thm.~\ref{thm:r4} \\
 $12n_{19}$ & 1 & $1-3t-t^2+7t^3-t^4-3t^5+t^6$ & 4 & 4 & Lem.~\ref{lem:obv} \\
 $12n_{23}$ & 9 & $2-5t+2t^2$ & 3 & 4, 5 & Lem.~\ref{lem:r3}\\
 $12n_{24}$ & 49 & $1-5t+11t^2-15t^3+11t^4-5t^5+t^6$ & 3 & 4 & Lem.~\ref{lem:r3}\\
 $12n_{43}$ & 81 & $1-5t+10t^2-15t^3+19t^4-15t^5+10t^6-5t^7+t^8$ &  4 & 5 & Thm.~\ref{thm:r4} \\
 $12n_{48}$ & 49 & $2-12t+21t^2-12t^3+2t^4$  & 4 & 4 & Lem~\ref{lem:obv} \\
 $12n_{49}$ & 81 & $4-20t+33t^2-20t^3+4t^4$  & 4 & 5, 6, 7, 8 & Thm.~\ref{thm:r4} \\
  $12n_{87}$ & 49 & $4-12t+17t^2-12t^3+4t^4$ & 4 & 4 & Lem.~\ref{lem:obv} \\
 $12n_{106}$ & 81 & $1-4t+10t^2-16t^3+19t^4-16t^5+10t^6-4t^7+t^8$ & 4 & 5 & Thm.~\ref{thm:r4}\\
 $12n_{145}$ & 25 & $1-6t+11t^2-6t^3+t^4$ & 3 & 3 & Lem.~\ref{lem:obv} \\
 $12n_{170}$ & 81 & $6-20t+29t^2-20t^3+6t^4$  & 4 & 5 & Thm.~\ref{thm:r4} \\
 $12n_{214}$ & 1 & $1-t-t^2+3t^3-t^4-t^5+t^6$ & 3 & 3 & Lem.~\ref{lem:obv} \\
 $12n_{256}$ & 25 & $2-6t+9t^2-6t^3+2t^4$ & 3 & 4 & Cor.~\ref{cor:1139} \\
 $12n_{257}$ & 25 & $2-6t+9t^2-6t^3+2t^4$ & 3 & 4 & Cor.~\ref{cor:1139} \\
 $12n_{268}$ & 9 & $2-5t+2t^2$ &  3 & 4 & Lem.~\ref{lem:r3} \\
 $12n_{279}$ & 25 & $1-6t+11t^2-6t^3+t^4$  & 3 & 3 & Lem.~\ref{lem:obv} \\
 $12n_{288}$ & 49 & $4-12t+17t^2-12t^3+4t^4$ & 4 & 4 & Lem.~\ref{lem:obv} \\
 $12n_{309}$ & 1 & $1-t-t^2+3t^3-t^4-t^5+t^6$ & 3 & 3 & Lem.~\ref{lem:obv} \\
 $12n_{312}$ & 49 & $1-5t+11t^2-15t^3+11t^4-5t^5+t^6$  & 3 & 4 & Lem~\ref{lem:r3} \\
 $12n_{313}$ & 1 & $1$ & 3 & 3 & Lem.~\ref{lem:obv} \\
 $12n_{318}$ & 1 & $1-t-t^2+3t^3-t^4-t^5+t^6$ & 3 & 3 & Lem.~\ref{lem:obv} \\
 $12n_{360}$ & 49 & $3-12t+19t^2-12t^3+3t^4$  & 3 & 4 & Lem~\ref{lem:r3} \\
 $12n_{380}$ & 81 & $2-9t+18t^2-23t^3+18t^4-9t^5+2t^6$  & 4 & 5 & Thm.~\ref{thm:r4} \\
 $12n_{393}$ & 49 & $3-12t+19t^2-12t^3+3t^4$ & 3 & 4 & Lem.~\ref{lem:r3} \\
 $12n_{394}$ & 25 & $1-6t+11t^2-6t^3+t^4$  & 3 & 4 & Cor.~\ref{cor:1139} \\
 $12n_{397}$ & 49 & $1-5t+11t^2-15t^3+11t^4-5t^5+t^6$  & 3 & 4 & Lem.~\ref{lem:r3}\\
 $12n_{399}$ & 81 & $1-7t+19t^2-27t^3+19t^4-7t^5+t^6$ & 4 & 5 & Thm.~\ref{thm:r4} \\
 $12n_{414}$ & 25 & $2-6t+9t^2-6t^3+2t^4$  & 3 & 3 & Lem.~\ref{lem:obv} \\
 $12n_{420}$ & 81 & $1-7t+19t^2-27t^3+19t^4-7t^5+t^6$  & 4 & 5 & Thm.~\ref{thm:r4} \\
 $12n_{430}$ & 1 & $1$  & 3 & 3 & Lem.~\ref{lem:obv} \\
 $12n_{440}$ & 81 & $2-9t+18t^2-23t^3+18t^4-9t^5+2t^6$  & 4 & 5 & Thm.~\ref{thm:r4} \\
 $12n_{462}$ & 25 & $1-6t+11t^2-6t^3+t^4$ &  3 & 3 & Lem.~\ref{lem:obv} \\
 $12n_{501}$ & 49 & $4-12t+17t^2-12t^3+4t^4$  & 3 & 4 & Lem.~\ref{lem:r3} \\
 $12n_{504}$ & 121 & $1-6t+15t^2-24t^3+29t^4-24t^5+15t^6-6t^7+t^8$ & 4 & 5, 6 & Thm.~\ref{thm:r4} \\
 $12n_{553}$ & 81 & $4-20t+33t^2-20t^3+4t^4$ & 4 & 5 & Thm.~\ref{thm:r4} \\
 $12n_{556}$ & 81 & $4-20t+33t^2-20t^3+4t^4$ & 4 & 5 & Thm.~\ref{thm:r4} \\
 $12n_{582}$ & 9 & $1-2t+3t^2-2t^3+t^4$ & 2 & 2 & Lem.~\ref{lem:obv} \\
 $12n_{605}$ & 9 & $1-2t+4t^3-7t^4+4t^5-2t^7+t^8$ & 4 & 4 & Lem.~\ref{lem:obv} \\
 $12n_{636}$ & 81 & $1-7t+19t^2-27t^3+19t^4-7t^5+t^6$ & 4 & 5 & Thm.~\ref{thm:r4} \\
 $12n_{657}$ & 81 & $1-4t+9t^2-16t^3+21t^4-16t^5+9t^6-4t^7+t^8$ & 4 & 5 & Thm~\ref{thm:r4} \\
 $12n_{670}$ & 25 & $1-2t+3t^2-4t^3+5t^4-4t^5+3t^6-2t^7+t^8$ & 4 & 4 & Lem.~\ref{lem:obv} \\
 $12n_{676}$ & 9 & $2-2t-4t^2+9t^3-4t^4-2t^5+2t^6$ & 4 & 5 & Thm.~\ref{thm:r4} \\
 $12n_{702}$ & 121 & $2-12t+28t^2-37t^3+28t^4-12t^5+2t^6$ & 4 & 5 & Thm.~\ref{thm:r4} \\
 $12n_{706}$ & 49 & $1-4t+6t^2-8t^3+11t^4-8t^5+6t^6-4t^7+t^8$  & 4 & 4, 5, 6 & Lem.~\ref{lem:obv} \\
 $12n_{708}$ & 49 & $1-3t+6t^2-9t^3+11t^4-9t^5+6t^6-3t^7+t^8$ & 4 & 4 & Lem.~\ref{lem:obv}\\
 $12n_{721}$ & 25 & $1-2t+3t^2-4t^3+5t^4-4t^5+3t^6-2t^7+t^8$ & 4 & 4 & Lem.~\ref{lem:obv} \\
 $12n_{768}$ & 25 & $1-3t+5t^2-7t^3+5t^4-3t^5+t^6$ & 3 & 3 & Lem.~\ref{lem:obv} \\
 $12n_{782}$ & 81 & $2-8t+18t^2-25t^3+18t^4-8t^5+2t^6$ & 4 & 5 & Thm.~\ref{thm:r4} \\
 $12n_{802}$ & 121 & $1-5t+14t^2-25t^3+31t^4-25t^5+14t^6-5t^7+t^8$ & 4 & 5 & Thm.~\ref{thm:r4} \\
 $12n_{817}$ & 49 & $2-6t+10t^2-13t^3+10t^4-6t^5+2t^6$ & 4 & 4 & Lem.~\ref{lem:obv} \\
 $12n_{838}$ & 25 & $1-6t+11t^2-6t^3+t^4$ & 3 & 3 & Lem.~\ref{lem:obv} \\
 $12n_{870}$ & 25 & $1-3t+5t^2-7t^3+5t^4-3t^5+t^6$ & 3  & 4 & Cor.~\ref{cor:1139} \\
 $12n_{876}$ & 81 & $2-8t+18t^2-25t^3+18t^4-8t^5+2t^6$ &  4 & 5 & Thm.~\ref{thm:r4} \\ \hline
    \end{tabular}}
\caption{Symmetric ribbon number data and justifications for prime non-alternating symmetric ribbon knots with 12 crossings}
	\label{table:12n}
\end{table}

\begin{table}[!ht]
    \centering
	\resizebox*{!}{.92\dimexpr\textheight-1\baselineskip\relax}{%
\begin{tabular}{|l|l|l|l|l|l|l|}
\hline
$K$ & $\det(K)$ & $\Delta_K(t)$  & $r(K) \geq $ & $r_s(K)$ & lower  \\ \hline
 $12a_3$ & 169 & $2-14t+40t^2-57t^3+40t^4-14t^5+2t^6$ & 4 & 5, 6 & Thm.~\ref{thm:r4} \\
 $12a_{54}$ & 169 & $3-17t+39t^2-51t^3+39t^4-17t^5+3t^6)$ & 4 & 5 & Thm.~\ref{thm:r4} \\
 $12a_{77}$ & 225 & $1-7t+24t^2-49t^3+63t^4-49t^5+24t^6-7t^7+t^8$ & 4 & 6, 7, 8 & Thm.~\ref{thm:det} \\
 $12a_{100}$ & 225 & $3-21t+53t^2-71t^3+53t^4-21t^5+3t^6$ & 4 & 6, 7, 8 & Thm.~\ref{thm:det} \\
 $12a_{173}$ & 169 & $1-7t+20t^2-35t^3+43t^4-35t^5+20t^6-7t^7+t^8$ & 4 & 5, 6 & Thm.~\ref{thm:r4} \\
 $12a_{183}$ & 121 & $6-30t+49t^2-30t^3+6t^4$ & 5 & 5 & Lem.~\ref{lem:obv} \\
 $12a_{189}$ & 225 & $1-8t+26t^2-48t^3+59t^4-48t^5+26t^6-8t^7+t^8$ & 4 & 6, 7, 8 & Thm.~\ref{thm:det} \\
 $12a_{211}$ & 169 & $2-8t+20t^2-34t^3+41t^4-34t^5+20t^6-8t^7+2*t^8$ & 5 & 5 & Lem.~\ref{lem:obv} \\
 $12a_{221}$ & 169 & $2-14t+40t^2-57t^3+40t^4-14t^5+2t^6$ & 4 & 5 & Thm.~\ref{thm:r4} \\
 $12a_{245}$ & 225 & $1-7t+24t^2-49t^3+63t^4-49t^5+24t^6-7t^7+t^8$ & 4 & 6, 7, 8 & Thm.~\ref{thm:det} \\
 $12a_{258}$ & 169 & $1-7t+20t^2-35t^3+43t^4-35t^5+20t^6-7t^7+t^8$ & 4 & 5 & Thm.~\ref{thm:r4} \\
 $12a_{279}$ & 169 & $2-15t+40t^2-55t^3+40t^4-15t^5+2t^6$  & 5 & 5, 6 & Lem.~\ref{lem:obv} \\
 $12a_{377}$ & 225 & $1-8t+26t^2-48t^3+59t^4-48t^5+26t^6-8t^7+t^8$ & 4 & 6, 7, 8 & Thm.~\ref{thm:det} \\
 $12a_{425}$ & 81 & $6-20t+29t^2-20t^3+6t^4$ & 4 & 5 & Thm.~\ref{thm:r4} \\
 $12a_{427}$ & 225 & $1-8t+26t^2-48t^3+59t^4-48t^5+26t^6-8t^7+t^8$ & 4 & 6 & Thm.~\ref{thm:det} \\
 $12a_{435}$ & 225 & $1-8t+26t^2-48t^3+59t^4-48t^5+26t^6-8t^7+t^8$ & 4 & 6 & Thm.~\ref{thm:det}\\
 $12a_{447}$ & 121 & $2-12t+28t^2-37t^3+28t^4-12t^5+2t^6$ & 4 & 5 & Thm.~\ref{thm:r4} \\
 $12a_{456}$ & 225 & $1-8t+25t^2-48t^3+61t^4-48t^5+25t^6-8t^7+t^8$ & 5 & 6, 7, 8 & Thm.~\ref{thm:det}  \\
 $12a_{458}$ & 289 & $1-9t+32t^2-63t^3+79t^4-63t^5+32t^6-9t^7+t^8$ & 5 & 6, 7 & Thm.~\ref{thm:det} \\
 $12a_{464}$ & 225 & $1-7t+24t^2-49t^3+63t^4-49t^5+24t^6-7t^7+t^8$ & 4 & 6 & Thm.~\ref{thm:det} \\
 $12a_{473}$ & 289 & $1-8t+30t^2-64t^3+83t^4-64t^5+30t^6-8t^7+t^8$ & 5 & 6, 7 & Thm.~\ref{thm:det} \\
 $12a_{477}$ & 169 & $1-11t+41t^2-63t^3+41t^4-11t^5+t^6$ & 5 & 5 & Lem.~\ref{lem:obv}\\
 $12a_{484}$ & 289 & $1-9t+32t^2-63t^3+79t^4-63t^5+32t^6-9t^7+t^8$ & 5 & 6 & Thm.~\ref{thm:det} \\
  $12a_{606}$ & 169 & $4-18t+38t^2-49t^3+38t^4-18t^5+4t^6$ & 5 & 5 & Lem.~\ref{lem:obv} \\
 $12a_{631}$ & 225 & $4-22t+52t^2-69t^3+52t^4-22t^5+4t^6$ & 4 & 6 & Thm.~\ref{thm:det} \\
 $12a_{646}$ & 169 & $2-9t+21t^2-33t^3+39t^4-33t^5+21t^6-9t^7+2t^8$ & 5 & 5, 6 & Lem.~\ref{lem:obv} \\
 $12a_{667}$ & 121 & $2-7t+15t^2-23t^3+27t^4-23t^5+15t^6-7t^7+2t^8$ & 5 & 5 & Lem.~\ref{lem:obv} \\
 $12a_{715}$ & 169 & $4-18t+38t^2-49t^3+38t^4-18t^5+4t^6$ & 5 & 5, 6 & Lem.~\ref{lem:obv} \\
 $12a_{786}$ & 169 & $2-15t+40t^2-55t^3+40t^4-15t^5+2t^6$ & 5 & 5, 6 & Lem.~\ref{lem:obv} \\
 $12a_{819}$ & 169 & $1-5t+12t^2-21t^3+29t^4-33t^5+29t^6-21t^7+12t^8-5t^9+t^{10}$ & 5 & 5 & Lem.~\ref{lem:obv} \\
 $12a_{879}$ & 121 & $2-7t+15t^2-23t^3+27t^4-23t^5+15t^6-7t^7+2t^8$ & 5 & 5 & Lem.~\ref{lem:obv} \\
 $12a_{887}$ & 289 & $1-9t+32t^2-63t^3+79t^4-63t^5+32t^6-9t^7+t^8$ & 5 & 6, 7& Thm.~\ref{thm:det} \\
 $12a_{975}$ & 225 & $4-22t+52t^2-69t^3+52t^4-22t^5+4t^6$ & 4 & 6 & Thm.~\ref{thm:det} \\
 $12a_{979}$ & 225 & $2-10t+27t^2-46t^3+55t^4-46t^5+27t^6-10t^7+2t^8$ & 5 & 6, 7, 8 & Thm.~\ref{thm:det} \\
 $12a_{1011}$ & 121 & $1-4t+9t^2-15t^3+20t^4-23t^5+20t^6-15t^7+9t^8-4t^9+t^{10}$ & 5 & 5 & Lem.~\ref{lem:obv} \\
 $12a_{1019}$ & 361 & $1-10t+39t^2-80t^3+101t^4-80t^5+39t^6-10t^7+t^8$ & 5 & 6 & Thm.~\ref{thm:det} \\
 $12a_{1029}$ & 81 & $2-6t+10t^2-14t^3+17t^4-14t^5+10t^6-6t^7+2t^8$ & 5 & 5 & Lem.~\ref{lem:obv} \\
 $12a_{1034}$ & 121 & $8-30t+45t^2-30t^3+8t^4$ & 5 & 5 & Lem.~\ref{lem:obv}  \\
 $12a_{1083}$ & 169 & $2-9t+21t^2-33t^3+39t^4-33t^5+21t^6-9t^7+2t^8$ & 5 & 5, 6 & Lem.~\ref{lem:obv} \\
 $12a_{1087}$ & 225 & $1-8t+25t^2-48t^3+61t^4-48t^5+25t^6-8t^7+t^8$ & 5 & 6, 7, 8 & Thm.~\ref{thm:det} \\
 $12a_{1105}$ & 289 & $1-8t+30t^2-64t^3+83t^4-64t^5+30t^6-8t^7+t^8$ & 5 & 6 & Thm.~\ref{thm:det} \\
 $12a_{1119}$ & 169 & $2-9t+21t^2-33t^3+39t^4-33t^5+21t^6-9t^7+2t^8$ & 5 & 5 & Lem.~\ref{lem:obv} \\
 $12a_{1202}$ & 169 & $9-42t+67t^2-42t^3+9t^4$ & 5 & 5, 6 & Lem.~\ref{lem:obv} \\
 $12a_{1269}$ & 169 & $4-17t+38t^2-51t^3+38t^4-17t^5+4t^6$ & 5 & 5 & Lem.~\ref{lem:obv} \\
 $12a_{1277}$ & 121 & $4-14t+26t^2-33t^3+26t^4-14t^5+4t^6$ & 5 & 5 & Lem.~\ref{lem:obv} \\
 $12a_{1283}$ & 81 & $1-3t+6t^2-10t^3+13t^4-15t^5+13t^6-10t^7+6t^8-3t^9+t^{10}$ & 5 & 5 & Lem.~\ref{lem:obv} \\ \hline
\end{tabular}}
\caption{Symmetric ribbon number data and justifications for prime alternating symmetric ribbon knots with 12 crossings}
	\label{table:12a}
\end{table}

\begin{table}[!ht]
    \centering
    	\resizebox*{!}{.35\dimexpr\textheight-1\baselineskip\relax}
{%
    \begin{tabular}{|l|l|l|l|l|l|}
    \hline
            $K$ & $\text{det}(K)$ & $\Delta_K(t)$ & $r(K) \geq $ & $r_s(K) \geq$ & lower \\ \hline
 $11a_{103}$ & 81 & $4-20t+33t^2-20t^3-4t^4$ & 4 & 5 & Thm.~\ref{thm:r4} \\
  $11a_{165}$ & 81 & $2-9t+18t^2-23t^3+18t^4-9t^5+2t^6$ & 4 & 5 & Thm.~\ref{thm:r4} \\
 $11a_{201}$ & 81 & $4-20t+33t^2-20t^3-4t^4$ & 4 & 5 & Thm.~\ref{thm:r4} \\
 $11n_{67}$ & 9 & $2-5t+2t^2$ & 3 & 4 & Lem.~\ref{lem:r3} \\
 $11n_{73}$ & 9 & $1-2t+3t^2-2t^3+t^4$ & 3 & 4 & Lem.~\ref{lem:r3}\\
 $11n_{74}$ & 9 & $1-2t+3t^2-2t^3+t^4$ & 3 & 4 & Lem.~\ref{lem:r3} \\
 $11n_{97}$ & 9 & $2-5t+2t^2$ & 3 & 4 & Lem.~\ref{lem:r3} \\
 $12a_{348}$ & 225 & $2-17t+54t^2-79t^3+54t^4-17t^5+2t^6$ & 5 & 6 & Thm.~\ref{thm:det} \\
  $12a_{990}$ & 225 & $1-8t+26t^2-48t^3+59t^4-48t^5+26t^6-8t^7+t^8$ & 4 & 6 & Thm.~\ref{thm:det} \\ 
   $12a_{1225}$ & 225 & $1-5t+14t^2-28t^3+41t^4-47t^5+41t^6-28t^7+14t^8-5t^9+t^{10}$ & 5 & 6 & Thm.~\ref{thm:det} \\ 
$12n_{51}$ & 9 & $2-5t+2t^2$ & 3 & 4 & Lem.~\ref{lem:r3} \\  
 $12n_{56}$ & 9 & $1-2t+3t^2-2t^3+t^4$ & 3 & 4 & Lem.~\ref{lem:r3}   \\ 
 $12n_{57}$ & 9 & $1-2t+3t^2-2t^3+t^4$ & 3 & 4 & Lem.~\ref{lem:r3} \\ 
 $12n_{62}$ & 81 & $2-9t+18t^2-23t^3+18t^4-9t^5+2t^6$ & 4 & 5 & Thm.~\ref{thm:r4} \\
 $12n_{66}$ & 81 & $2-9t+18t^2-23t^3+18t^4-9t^5+2t^6$ & 4 & 5 & Thm.~\ref{thm:r4} \\ \hline
\end{tabular}}
	\caption{Prime ribbon knots that are not known to admit symmetric union presentations.  Note that the penultimate column is a lower bound for $r_s(K)$.}
	\label{table:non}
\end{table}

\section{Appendix: Proof of Theorem~\ref{thm:r4}}\label{sec:app}

We first note some overlap in the eight reduced singular arc diagrams $4_1,\dots,4_8$, which can be eliminated via leaf isotopies.  Figures~\ref{fig:simp1}, \ref{fig:simp5}, and \ref{fig:simp6} include singular arc diagrams with crossing information added at only one relevant double point, since the leaf isotopy can differ based on this crossing information.  In Figure~\ref{fig:simp1}, we see that two possible leaf isotopies convert a diagram of type $4_1^a$ to type $4_7$ or type $4_1^b$ to type $4_8$.  Similarly, in Figure~\ref{fig:simp5}, the two possible leaf isotopies convert a labeled knotoid diagram of type $4_5^a$ to type $4_4$ or type $4_5^b$ to type $4_7$.  Finally, in Figure~\ref{fig:simp6}, two possible leaf isotopies convert a labeled knotoid diagram of type $4_6^a$ to type $4_5$ or type $4_6^b$ to type $4_1$.  Therefore, after simplifying with leaf isotopies, we need only consider labeled knotoid diagrams of types $4_2$, $4_3$, $4_4$, $4_7$, and $4_8$.

\begin{figure}[h!]
    \centering
    \includegraphics[width=0.9\textwidth]{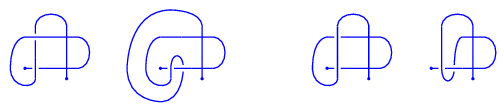}
                                \put (-355,-10) {$4_1^a$}
                                \put (-260,-10) {$4_7$}
                                \put (-122,-10) {$4_1^b$}
                                \put (-40,-10) {$4_8$}
                                \put (-315,45) {$\longrightarrow$}
                                \put (-80,45) {$\longrightarrow$}
     \caption{Two possible leaf isotopies on a labeled knotoid diagram of type $4_1$.}
    \label{fig:simp1}
\end{figure}

\begin{figure}[h!]
    \centering
    \includegraphics[width=0.9\textwidth]{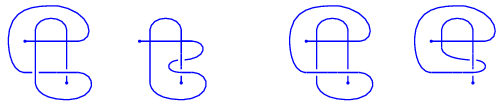}
                                \put (-355,-10) {$4_5^a$}
                                \put (-260,-10) {$4_4$}
                                \put (-132,-10) {$4_5^b$}
                                \put (-40,-10) {$4_7$}
                                \put (-311,45) {$\longrightarrow$}
                                \put (-95,45) {$\longrightarrow$}
     \caption{Two possible leaf isotopies on a labeled knotoid diagram of type $4_5$.  In the second frame, an arc of the diagram $4_4$ has been redrawn via planar isotopy in $S^2$.}
    \label{fig:simp5}
\end{figure}

\begin{figure}[h!]
    \centering
    \includegraphics[width=0.9\textwidth]{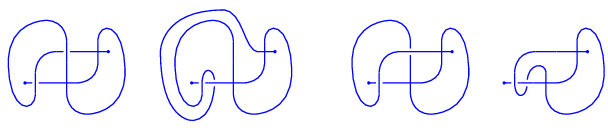}
                                \put (-351,-10) {$4_6^a$}
                                \put (-250,-10) {$4_5$}
                                \put (-132,-10) {$4_6^b$}
                                \put (-40,-10) {$4_1$}
                                \put (-308,45) {$\longrightarrow$}
                                \put (-85,45) {$\longrightarrow$}
                  \caption{Two possible leaf isotopies on a labeled knotoid diagram of type $4_6$.}
    \label{fig:simp6}
\end{figure}

\begin{proof}[Proof of Theorem~\ref{thm:r4}]
In order to find all possible Alexander polynomials of knots $K$ with $r_s(K) = 4$, it suffices to compute Alexander polynomials for all mod 2 labeled knotoid diagrams of types $4_2$, $4_3$, $4_4$, $4_7$, and $4_8$.  Each type involves two choices for each interior crossing and two choices for each of three strands, for a total of 32 possibilities.  Below, we list all distinct Alexander polynomials associated to each type.

\begin{eqnarray*}
4_2: && 1 \\
 && 1-3t^2+t^4 \\
 && 1-t-t^2+3t^3-t^4-t^5+t^6 \\
 && 1 -3t-t^2 +7t^3 -t^4 -3t^5 +t^6 \\
 && 1-3t+2t^2+3t^3-7t^4+3t^5+2t^6-3t^7+t^8
 \end{eqnarray*}

\begin{eqnarray*}
4_3: && 1 \\
 && 1-3t^2+t^4 \\
&& 1-t-t^2+3t^3-t^4-t^5+t^6 \\
  && 1-2t+3t^2 -4t^3 +5t^4 - 4t^5 + 3t^6 - 2t^7 + t^8 \\
  && 1 - 3t + 5t^2 - 7t^3 + 5t^4 - 3t^5 + t^6 \\
   && 1-4t+6t^2 -8t^3 +11t^4 -8t^5 +6t^6 -4t^7 +t^8 \\
   && 1-5t+11t^2 -15t^3 +11t^4 - 5t^6 + t^6 \\
     && 1 - 6t + 11t^2 - 6t^3 + t^4 \\
       && 2 - 6t + 9t^2 - 6t^3 + 2t^4 \\
  && 2-12t+21t^2 -12t^3 +2t^4 \\
 && 4-12t+17t^2 -12t^3 +4t^4 \\
  && 6-13t+6t^2
\end{eqnarray*}

\begin{eqnarray*}
4_4: && 1 \\
&& 1-2t^2 +3t^4 -2t^6 +t^8 \\
&& 1-t-t^3 +3t^4 -t^5 -t^7 +t^8 \\
&& 1-t+t^2 -3t^3 +t^4 -t^5 +t^6 \\
&& 1-2t+3t^2-2t^3+t^4  \\
&& 1-2t+3t^2 -4t^3 +5t^4 - 4t^5 + 3t^6 - 2t^7 + t^8 \\
&& 1 - 3t + 5t^2 - 7t^3 + 5t^4 - 3t^5 + t^6 \\
&& 1 - 6t + 11t^2 - 6t^3 + t^4 \\
&& 2-5t^2 +2t^4 \\
&& 2-5t+2t^2 \\
&& 2 - 6t + 9t^2 - 6t^3 + 2t^4 \\
&& 6-13t+6t^2
\end{eqnarray*}

\begin{eqnarray*}
4_7: && 1 \\
&& 1-2t^2 +3t^4 -2t^6 +t^8 \\
&& 1-t-t^3 +3t^4 -t^5 -t^7 + t^8 \\
&& 1-t-t^2+3t^3-t^4-t^5+t^6 \\
&& 1-t+t^2 -3t^3 +t^4 -t^5 +t^6 \\
&& 1-2t+t^2 +2t^3 -5t^4 +2t^5 +t^6 -2t^7 +t^8 \\
&& 1-2t+3t^2-2t^3+t^4  \\
&& 1 -3t-t^2 +7t^3 -t^4 -3t^5 +t^6 \\
&& 1-3t+6t^2 -9t^3 +11t^4 -9t^5 +6t^6 -3t^7 +t^8 \\
&& 1-5t+11t^2 -15t^3 +11t^4 - 5t^6 + t^6 \\
&& 2-5t^2 +2t^4 \\
&& 2 -3t-2t^2 +7t^3 -2t^4 -3t^5 +2t^6 \\
&& 2-5t+2t^2 \\
&& 2-6t+10t^2 -13t^3 +10t^4 -6t^5 +2t^6 \\
&& 2-12t+21t^2 -12t^3 +2t^4 \\
&& 3-12t+19t^2 -12t^3 +3t^4 \\
&& 4-12t+17t^2 -12t^3 +4t^4
\end{eqnarray*}

\begin{eqnarray*}
4_8: && 1 \\
&& 1-2t^2 +3t^4 -2t^6 +t^8 \\
&& 1-t-t^2+3t^3-t^4-t^5+t^6 \\
&& 1 - t - 3t^2 + 7t^3 - 3t^4 - t^5 + t^6 \\
&& 1-2t+4t^3 -7t^4 +4t^5 - 2t^7 + t^8 \\
&& 1-2t+t^2 +2t^3 -5t^4 +2t^5 +t^6 -2t^7 +t^8 \\
&& 1-2t+3t^2-2t^3+t^4  \\
&& 1 -3t-t^2 +7t^3 -t^4 -3t^5 +t^6 \\
&& 2-5t^2 +2t^4 \\
&& 2 -3t-2t^2 +7t^3 -2t^4 -3t^5 +2t^6 \\
&& 2-5t+2t^2
\end{eqnarray*}
These lists contain a total of 27 distinct Alexander polynomials, which are sorted by determinant in Table~\ref{table:r4}.
\end{proof}


\bibliographystyle{amsalpha}
\bibliography{symmetric.bib}

\end{document}